\documentclass{amsart}
\usepackage{amsfonts,amscd,amsthm,amsgen,amsmath,amssymb}
\usepackage[all]{xy}
\usepackage{epsfig,color}
\usepackage[vcentermath]{youngtab}
\newtheorem{theorem}{Theorem}[section]
\newtheorem{lemma}[theorem]{Lemma}

\newtheorem{proposition}[theorem]{Proposition}

\theoremstyle{definition}
\newtheorem{definition}[theorem]{Definition}

\theoremstyle{remark}
\newtheorem{remark}[theorem]{Remark}

\numberwithin{equation}{section}

\begin{document}

\setlength\parskip{0.5em plus 0.1em minus 0.2em}

\title{Exotic diffeomorphisms of reducible $4$-manifolds with odd $b_+$}
\author{David Baraglia}

\address{School of Mathematical Sciences, Adelaide University, Adelaide SA 5005, Australia}

\email{david.baraglia@adelaide.edu.au}


\date{\today}

\begin{abstract}

A diffeomorphism of a $4$-manifold is said to be exotic if it is continuously isotopic to the identity but not smoothly isotopic to the identity. Ruberman constructed the first examples of exotic diffeomorphisms on simply-connected closed $4$-manifolds. His examples were reducible $4$-manifolds that necessarily have even $b_+$ in order that they can be detected by the families Seiberg--Witten or Donaldson invariants. Later Konno and Baraglia produced exotic diffeomorphisms on irreducible $4$-manifolds with odd $b_+$. In this paper, we will construct exotic diffeomorphisms on reducible $4$-manifolds with odd $b_+$. Exoticness is detected using a families Bauer--Furuta invariant. In proving our results we need to work with families moduli spaces which are not framed and so do not give rise to framed cobordism invariants. We overcome this difficulty by considering a Bauer--Furuta type invariant valued in {\em pin-cobordism}. In addition to constructing exotic diffeomorphisms, we also find new examples of simply-connected $4$-manifolds whose mapping class groups are not finitely generated.

\end{abstract}

\maketitle




\section{Introduction}

Let $X$ be a compact, oriented, smooth, simply-connected $4$-manifold. The {\em mapping class group of $X$}, $M(X)$ is defined as the group of smooth isotopy classes of orientation preserving diffeomorphisms of $X$. The {\em Torelli group of $X$}, $T(X)$ is the subgroup of $M(X)$ consisting of isotopy classes which are continuously isotopic to the identity. The Torelli group measures the difference between smooth and continuous isotopy. In particular, $T(X)$ is non-trivial if and only if $X$ admits exotic diffeomorphisms, where a diffeomorphism is said to be {\em exotic} if it is continuously isotopic to the identity, but not smoothly isotopic to the identity.

The first examples of exotic diffeomorphisms on compact simply-connected $4$-manifolds were constructed by Ruberman \cite{rub1,rub2}. His examples were of the form $2n \mathbb{CP}^2 \# k \overline{\mathbb{CP}^2}$ with $n \ge 2$, $k \ge 10n+1$. Exoticness is detected using families Seiberg--Witten or families Donaldson invariants. Baraglia--Konno extended Ruberman's technique, allowing for spin examples such as $n(S^2 \times S^2) \# nK3$, $n \ge 1$ \cite{bk1}. In all of these examples $b_+(X)$ is even, which is a necessary consequence of the fact that the diffeomorphisms are being detected using a $1$-parameter families Seiberg--Witten invariant. Later, Baraglia and Konno found a method of constructing exotic diffeomorphisms on certain irreducible, simply-connected $4$-manifolds with odd $b_+$ \cite{bk2}. This method however does not work for connected sums such as $a \mathbb{CP}^2 \# b \overline{\mathbb{CP}^2}$ and $a (S^2 \times S^2) \# b K3$ because the manifold in question needs to have a non-vanishing Seiberg--Witten invariant.

The aim of this paper is to construct exotic diffeomorphisms on reducible $4$-manifolds with odd $b_+$. Our methods also produce additional results of interest including exotic diffeomorphisms which remain exotic on connected sum with a large class of $4$-manifolds and new examples of simply-connected $4$-manifolds for which the mapping class group is not finitely generated. Our main results are as follows:

\begin{theorem}
Let $X$ be one of:
\begin{itemize}
\item[(1)]{$n (S^2 \times S^2) \# (n+1) K3$, $n \ge 1$, or}
\item[(2)]{$(4 n+3) \mathbb{CP}^2 \# k \overline{\mathbb{CP}^2}$, $n\ge 1$, $k \ge 20n+19$.}
\end{itemize}

Then there exists a surjective homomorphism
\[
\varphi : T(X) \to \mathbb{Z}_2^\infty.
\]
In particular, the Torelli group $T(X)$ is not finitely generated.
\end{theorem}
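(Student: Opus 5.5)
The plan is to build $\varphi$ from a families Bauer--Furuta invariant attached to mapping tori. Write $X = X_0 \# M$, where $M$ is $K3$ in case (1) and $\mathbb{CP}^2 \# \overline{\mathbb{CP}^2}$-type pieces in case (2). The remainder $X_0$ has even $b_+$ and is the kind of manifold on which Ruberman / Baraglia--Konno diffeomorphisms live. For a diffeomorphism $f$ in $T(X)$ preserving a spin$^c$ structure $\mathfrak{s}$, form the mapping torus $E_f \to S^1$. Take the families Seiberg--Witten monopole map over $S^1$ for $(E_f,\mathfrak{s})$. Since $b_+(X)$ is odd, the naive $1$-parameter invariant is not defined: the virtual dimension has the wrong parity, and the families moduli space is not framed because the family of $H^+$ bundles need not be trivializable compatibly. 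The remedy, as the abstract indicates, is to record the cobordism class of the families moduli space in pin-cobordism. This uses the fact that the orientation obstruction is governed by $w_1, w_2$ of the index bundle, and over $S^1$ these only produce a pin structure.

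First, I will show that for each $\mathfrak{s}$ of a fixed suitable dimension (for instance virtual dimension $1$ or $2$, as dictated by the $\Omega^{\mathrm{Pin}^-}$ groups in low degree), the invariant $FBF(f,\mathfrak{s}) \in \mathbb{Z}_2$ is well defined. Concretely it is the image in $\Omega^{\mathrm{Pin}}_{*}$, or its reduction mod $2$. I will then show it is a homomorphism on the stabilizer of $\mathfrak{s}$, using additivity under concatenation of mapping tori, and that it vanishes on diffeomorphisms smoothly isotopic to the identity, since those give a trivial family. Next I define
\[
\varphi(f) = \bigl(FBF(f,\mathfrak{s}_i)\bigr)_i ,
\]
where the $\mathfrak{s}_i$ range over an infinite collection of spin$^c$ structures. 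To handle the issue that $T(X)$ acts trivially on homology but might not fix spin$^c$ structures, I will note that Torelli elements do fix them because they act trivially on $H^2$. Summing over the orbit then gives a well-defined homomorphism to $\prod \mathbb{Z}_2$, and I will restrict to the finitely-nonzero part.

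For surjectivity, I will construct, for each $i$, an exotic diffeomorphism $f_i$ in the Ruberman style: a product of reflections in $(-2)$- or $(\pm1)$-spheres on $X_0$, extended by the identity over $M$. I will then prove a connected-sum gluing formula of the form
\[
FBF(f_i \# \mathrm{id}_M, \mathfrak{s}) = SW(f_i,\mathfrak{s}|_{X_0}) \cdot BF(M,\mathfrak{s}|_M).
\]
Here the second factor is the ordinary Bauer--Furuta class of $M$ (for $K3$, $b_+=3$, a nontrivial stable stem or its pin-cobordism image such as $\eta$). The mod $2$ families Seiberg--Witten invariants of $f_i$ can be arranged, by the Ruberman / Baraglia--Konno wall-crossing computation, to be nonzero on $\mathfrak{s}_i$ and zero on $\mathfrak{s}_j$ for $j \ne i$. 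This is done by choosing the spheres so that each $f_i$ detects a distinct class, using the large number of $\overline{\mathbb{CP}^2}$ summands in case (2) (where $k \ge 20n+19$ ensures the required dimension counts) and $n$ copies of $K3$ in case (1). Then $\varphi(f_i) = e_i$, which gives surjectivity onto $\mathbb{Z}_2^\infty$, and hence the Torelli group is not finitely generated.

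The main obstacle is the gluing / product formula in pin-cobordism. Concretely, the difficulty is showing that the families invariant of a connected sum factors as the families class of one side times the unparametrized Bauer--Furuta class of the other, and that this product is nonzero in pin-cobordism, where ordinary framed stable homotopy is unavailable. I would first try to prove it by realizing both invariants as finite-dimensional approximations and using the smash-product formula for Bauer--Furuta maps of connected sums, carried out equivariantly over $S^1$. I would then check that the Pin-Thom class map sends the relevant element ($\eta$ or $\eta^2$) to a nonzero class in $\Omega^{\mathrm{Pin}^-}_{1}$ or $\Omega^{\mathrm{Pin}^-}_{2}$, which are $\mathbb{Z}_2$ and $\mathbb{Z}_8$ respectively.
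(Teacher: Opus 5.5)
Your outer architecture matches the paper's. The paper writes the manifold as $X\#K3$ with $X$ as in Theorem~\ref{thm:exotic1}, and uses Proposition~\ref{prop:gluing2} with $BF_{K3}=\eta$ to get $\Psi(f\#\mathrm{id}_{K3})=\eta\cdot\Phi(f)$. Since $\eta\cdot:\Omega^{fr}_1\to\Omega^{fr}_2$ is injective, the image stays non-finitely generated. Any non-finitely generated subgroup of $\bigoplus\mathbb{Z}_2$ is already $\mathbb{Z}_2^\infty$, so you do not need $\varphi(f_i)=e_i$. However, two steps fail as written.

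\textbf{Case (2) uses the wrong summand.} Here $M$ cannot be built from $\mathbb{CP}^2$'s and $\overline{\mathbb{CP}^2}$'s. Such an $M$ has $b_+(M)$ odd, hence $\geq 1$, and carries positive scalar curvature. So its non-equivariant $BF_{M,\mathfrak{s}}$ vanishes for every $\mathfrak{s}$, and your product formula returns $0$. The missing input is the dissolving $K3\#\mathbb{CP}^2\cong 4\mathbb{CP}^2\#19\overline{\mathbb{CP}^2}$. It gives $(4n+3)\mathbb{CP}^2\#k\overline{\mathbb{CP}^2}\cong\bigl(4n\mathbb{CP}^2\#(k-19)\overline{\mathbb{CP}^2}\bigr)\#K3$, so $M=K3$ in both cases. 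The hypothesis $k\geq 20n+19$ is exactly $k-19\geq 20n$, the range of Theorem~\ref{thm:exotic1}, not a dimension count.

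\textbf{The product formula has the wrong degree.} $\mathfrak{s}\#\mathfrak{s}_{K3}$ has $d=0$, so the left side lies in $\Omega^{fr}_2$, while $SW(f_i,\mathfrak{s})\cdot\eta$ lies in $\Omega^{fr}_1$. What multiplies $BF_{K3}$ is the framed class $FBF_{X_0,\mathfrak{s}}(f_i)\in\Omega^{fr}_1$ of the one-dimensional unquotiented moduli space, not the mod $2$ families Seiberg--Witten count. Passing from the Ruberman/Baraglia--Konno count to this class is an extra lemma you have not stated. A free circle orbit carries the Lie framing $\eta$ precisely when $\operatorname{ind}_{\mathbb{C}}D$ is even, and for $d(\mathfrak{s})=-1$ one has $\operatorname{ind}_{\mathbb{C}}D=b_+(X_0)/2$. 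So $FBF=\eta\cdot FSW$ holds only when $b_+(X_0)\equiv 0\pmod 4$; for $b_+(X_0)\equiv 2\pmod 4$ every orbit is null-framed and the class vanishes. \emph{Even} $b_+$ is not enough, and this parity is why the statement is restricted to $b_+=4n+3$.

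\textbf{How the paper gets the framed class.} The paper avoids any families comparison. For $t_m=f_m\circ f_0$ it splits $FBF^{pin}(t_m)=FBF^{pin}(f_m)+FBF^{pin}(f_0)$. This is where pin-cobordism is genuinely needed, since each reflection has $sgn_+=-1$. For Torelli elements $H^+(E(f)/S^1)$ is trivial, so the product formula you flag as the main obstacle is purely framed. The paper then evaluates each term by Proposition~\ref{prop:gluing1pin} as an unparametrised $BF^{pin}$. Finally it uses Bauer--Furuta's $BF_{E(2n)_m,\mathfrak{s}_{can}}=SW\cdot\eta$ for $b_+=4n-1\equiv 3\pmod 4$.

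\textbf{Source of infinitely many spin$^c$ structures.} These do not come from \emph{choosing the spheres}. They come from the decompositions $X_0\cong E(2n)_m\#(S^2\times S^2)$ (blown up in case (2)), whose canonical classes have divisibility $(2n-1)m-1$. This construction also reaches the boundary case $k=20n+19$, which lies outside the range quoted for Ruberman's examples.
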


The next theorem shows that there are exotic diffeomorphisms that remain exotic on connected sum with a large class of $4$-manifolds. It is a simplified version of Theorem \ref{thm:exotic2}. In this theorem $T_0(X)$ denotes the relative Torelli group of $X$ (see Section \ref{sec:exoticdiff} for the definition).

\begin{theorem}
Let $X$ be one of:
\begin{itemize}
\item[(1)]{$n (S^2 \times S^2) \# n K3$, $n \ge 1$, or}
\item[(2)]{$4 n \mathbb{CP}^2 \# k \overline{\mathbb{CP}^2}$, $n\ge 1$, $k \ge 20n$.}
\end{itemize}

Then there exists a homomorphism
\[
\Phi : T(X) \to \mathbb{Z}_2^\infty
\]
whose image is not finitely generated and has the following property. Let $X_1,X_2$ be compact, oriented smooth simply-connected $4$-manifolds with $b_+(X_i) = 3 \; ({\rm mod} \; 4)$ and $SW(X_i , \mathfrak{s}_i) = 1 \; ({\rm mod} \; 2)$ for some spin$^c$-structure $\mathfrak{s}_i$. Then there exists homomorphisms
\[
\Psi_1 : T(X \# X_1 ) \to \mathbb{Z}_2^\infty, \quad \Psi_2 : T(X \# X_1 \# X_2) \to \mathbb{Z}_{24}^\infty
\]
such that the following diagrams commute
\[
\xymatrix{
T_0(X) \ar[r]^-{ \# id_{X_1} } \ar[d] & T(X \# X_1) \ar[d]^-{\Psi_1} &  T_0(X) \ar[rr]^-{ \# id_{(X_1 \# X_2)}} \ar[d] & & T(X \# X_1 \# X_2) \ar[d]^-{\Psi_2} \\
T(X) \ar[r]^-{\Phi} & \mathbb{Z}_2^\infty & T(X) \ar[r]^-{\Phi} & \mathbb{Z}_2^\infty \ar[r]^-{12} & \mathbb{Z}_{24}^\infty
}
\]

In particular, $X$ admits infinitely many isotopy classes of exotic diffeomorphisms which remain exotic on connected sums with $X_1$ and $X_1 \# X_2$.
\end{theorem}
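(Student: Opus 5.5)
The plan is to build $\Phi$ from a one-parameter families Seiberg--Witten invariant, which is available because $b_+(X)$ is even in both cases: $b_+ = 3n + 3n = 4n$ in case (1) and $4n$ in case (2). The connected-sum maps $\Psi_1,\Psi_2$ will come from a families Bauer--Furuta invariant, combined with a gluing formula that extracts the contribution of $X_1$ (and $X_2$).

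\textbf{Step 1: construction of $\Phi$.}
\begin{itemize}
\item For each spin$^c$ structure $\mathfrak{s}$ on $X$ with $d(\mathfrak{s}) = -1$, I define the mod $2$ families invariant $FSW(f,\mathfrak{s}) \in \mathbb{Z}_2$ of a diffeomorphism $f$ acting trivially on $H^2$. It is the mod $2$ count of the parametrised moduli space over a path of metrics and perturbations from $g$ to $f^*g$.
\item This is well defined and a homomorphism on $T(X)$ because $b_+ \ge 3$, so there is no wall-crossing for $1$-parameter families.
\item Take $\Phi = (FSW(\cdot,\mathfrak{s}_j))_j$ over an infinite sequence of such $\mathfrak{s}_j$.
\end{itemize}

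\textbf{Step 2: infinitely generated image.} I use Ruberman/Baraglia--Konno type diffeomorphisms. Decompose $X$, for instance as $X \cong M_j \# (S^2\times S^2)$ in case (1), or $M_j \# \mathbb{CP}^2 \# \overline{\mathbb{CP}^2}$ in case (2). Here $M_j$ runs over exotic copies (e.g. Fintushel--Stern knot surgeries on $K3$ or Dolgachev surfaces) that become diffeomorphic after one stabilisation, via Wall.

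For each $j$, take $f_j$ to be a composite of the stabilising diffeomorphism with the reflection on the $S^2\times S^2$ summand, conjugated back. This is the standard construction, which produces Torelli elements by Quinn/Perron (topological isotopy in the simply connected case).

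The families gluing formula gives $FSW(f_j,\mathfrak{s}) = SW(M_j,\mathfrak{s}') + SW(M_0,\mathfrak{s}')$ mod $2$. Choosing the $M_j$ with SW basic classes of unbounded size makes the vectors $\Phi(f_j)$ span an infinitely generated subgroup of $\mathbb{Z}_2^\infty$.

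\textbf{Step 3: construction of $\Psi_1$ and $\Psi_2$.} Here $b_+$ of the sum is odd, so the ordinary families SW count vanishes for parity reasons. Instead I take the families Bauer--Furuta invariant of $X\#X_1$ for $f$, valued in a stable cohomotopy or cobordism group.

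Because $b_+(X_1)\equiv 3 \pmod 4$, the moduli space lives in the wrong degree to be framed. As the abstract indicates, I would use a pin-cobordism-valued invariant, which lands in $\mathbb{Z}_2$ (pin$^-$ bordism of low dimension). For two summands it lands in $\mathbb{Z}_{24}$, the framed/stable stem $\pi_3^s$, via the $J$-type map.

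I then define $\Psi_1$ and $\Psi_2$ over spin$^c$ structures $\mathfrak{s}_j\#\mathfrak{s}_1$ and $\mathfrak{s}_j\#\mathfrak{s}_1\#\mathfrak{s}_2$, respectively.

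\textbf{Step 4: commutativity of the diagrams.} This is a connected-sum formula for families BF invariants. For $f\in T_0(X)$ fixing a ball, the family on $X\#X_1$ is a fibrewise connected sum of the family for $f$ with the trivial family of $X_1$. The BF invariant is then the smash product of the families invariant of $X$ (which is $\Phi(f)$ times a generator) with $BF(X_1,\mathfrak{s}_1)$.

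$BF(X_1,\mathfrak{s}_1)$ is the generator $\eta\in\pi_1^s$-type element when the SW invariant is odd and $b_+\equiv 3 \pmod 4$. The product with a second such class gives $\eta^2$, and this lifts to $12 \in \mathbb{Z}_{24}$ under the appropriate identification. This yields the factor $12$ in the second diagram.

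I expect this step to be the main obstacle. It requires identifying the non-framed pin-bordism class of the glued moduli space with the product $\eta\cdot\Phi(f)$ (respectively $\eta^2\cdot\Phi(f)$, i.e. $12\Phi(f)$), including the orientation/pin structure conventions. Once that holds, the exoticness statements follow from the commutativity and Step 2.
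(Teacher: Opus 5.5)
Your skeleton matches the paper's. You use a $d(\mathfrak{s})=-1$ invariant $\Phi$ on $T(X)$ and Ruberman-type elements $t_m=f_m\circ f_0$ built from the reflection $\rho$ on an $S^2\times S^2$ summand. You then apply the gluing formula $FBF_{X\# W,\mathfrak{s}\#\mathfrak{s}_W}(f\# id_W)=FBF_{X,\mathfrak{s}}(f)\cdot BF_{W,\mathfrak{s}_W}$, with $BF_{X_i,\mathfrak{s}_i}=\eta$ (Bauer--Furuta) and $\eta^3=12\nu$.

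\textbf{Pin-cobordism is in the wrong place.} Steps 3--4 use it where it is not needed. Every $g\in T(X\# X_1)$ or $T(X\# X_1\# X_2)$ acts trivially on $H^2$, so $sgn_+(g)=1$ and $H^+$ over the mapping torus is trivial. Hence the framed invariant is defined, and $\Psi_1,\Psi_2$ are simply $\bigoplus_{\mathfrak{s}} FBF_{X\# W,\mathfrak{s}\#\mathfrak{s}_W}$, valued in $\Omega^{fr}_2\cong\mathbb{Z}_2$ and $\Omega^{fr}_3\cong\mathbb{Z}_{24}$. Nothing lives in a ``wrong degree''. A pin-valued $\Psi_2$ could not work, since $\Omega^{pin}_3=0$; note also that $\Omega^{pin}_2\cong\mathbb{Z}_8$, not $\mathbb{Z}_2$. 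So the obstacle you flag in Step 4 does not exist.

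In the paper, pin-cobordism is needed only to \emph{evaluate} $\Phi$. There $\Phi$ is defined as $\bigoplus_{\mathfrak{s}} FBF_{X,\mathfrak{s}}$ with values in $\Omega^{fr}_1$. The factors $f_m,f_0$ of $t_m$ have $sgn_+=-1$, so the paper uses three facts:
\begin{itemize}
\item $FBF^{pin}_{X,\mathfrak{s}}$ is a homomorphism on all of $M(X,\mathfrak{s})$;
\item the pin gluing formula gives $FBF^{pin}_{X,\mathfrak{s}_m}(f_m)=SW(E(2n)_m,\mathfrak{s}_{can})\,\eta^{pin}$;
\item $\Omega^{fr}_1\to\Omega^{pin}_1$ is an isomorphism.
\end{itemize}

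\textbf{The real gap.} You define $\Phi$ by mod $2$ families Seiberg--Witten counts instead. Both diagrams then rest on your parenthetical claim that the framed families Bauer--Furuta class of $(X,\mathfrak{s},f)$ equals $\Phi(f)\cdot\eta$, and you give no argument for it. It is true here, but it needs proof. Take an $S^1$-invariant perturbation; then the finite-dimensional moduli space is a disjoint union of free $S^1$-orbits, one over each point of the $0$-dimensional families moduli space. Equivariance of the framing isomorphism shows that each orbit carries the Lie group framing twisted by $\mathrm{ind}_{\mathbb{C}}(D)=(d(\mathfrak{s})+b_+(X)+1)/2 \bmod 2$. Here that index is $b_+(X)/2=2n$, which is even, so each orbit contributes $\eta$.

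This parity condition is essential: for $b_+\equiv 2 \bmod 4$ the orbits would bound, just as Bauer--Furuta need $b_+\equiv 3\bmod 4$ to get $BF=SW\cdot\eta$. With this lemma your route is valid and genuinely avoids pin-cobordism, because the classical mod $2$ path count handles the $sgn_+=-1$ factors directly (the quotient moduli space is $0$-dimensional). Without it, $\Phi$ and the $\Psi_i$ are unrelated.

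\textbf{Smaller corrections.}
\begin{itemize}
\item The relevant product is $\Phi(f)\eta\cdot\eta\cdot\eta=\eta^3=12\nu$, not ``$\eta^2$ lifting to $12$''.
\item Your sample $M_j$ need adjusting; for instance, Dolgachev surfaces have $b_+=1$. The paper uses $E(2n)_m$ ($m$ odd) in case (1) and $E(2n)_m\#(k-20n)\overline{\mathbb{CP}^2}$ ($m$ even) in case (2).
\item The one-stable diffeomorphism of these $M_j$ with $X$ comes from Gompf, Mandelbaum and Moishezon. Wall's theorem is used only to control the induced map on $H^2$.
\end{itemize}
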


Our last main result is that the mapping class groups of certain simply-connected $4$-manifolds are not finitely generated. In \cite{bar2}, we proved that $M(X)$ was not finitely generated for $X = 2n \mathbb{CP}^2 \# 10n \overline{\mathbb{CP}}^2 \cong E(n) \# (S^2 \times S^2)$ for any odd $n \ge 3$. The same result was shown by Konno for $E(n) \# (S^2 \times S^2)$ and $n \ge 2$ \cite{kon}. In \cite{bt}, we extended this result to the case $n=1$. We will prove the following:

\begin{theorem}

Let $X$ be one of 
\begin{itemize}
\item[(1)]{$4 n \mathbb{CP}^2 \# 20n \overline{\mathbb{CP}^2}$, $n\ge 1$.}
\item[(2)]{$(4 n-1) \mathbb{CP}^2 \# (20n-1) \overline{\mathbb{CP}^2}$, $n\ge 2$.}
\end{itemize}

Then $M(X)$ is not finitely generated. In fact, the abelianisation of $M(X)$ is not finitely generated.

\end{theorem}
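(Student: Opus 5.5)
The plan is to build a homomorphism from $M(X)$ to an abelian group that is not finitely generated. The natural tool is a families Bauer--Furuta-type invariant of mapping tori, valued in $\mathbb{Z}_2$ and indexed by a countable family of spin$^c$-structures. For each relevant spin$^c$-structure $\mathfrak{s}$ on $X$, and each diffeomorphism $f$ preserving $\mathfrak{s}$, the mapping torus $E_f \to S^1$ carries a $1$-parameter families moduli space. Its pin-cobordism (or mod $2$) class defines an invariant $BF(f,\mathfrak{s})$.

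Since $M(X)$ does not fix $\mathfrak{s}$, I would sum over orbits, in the style of \cite{bar2,kon}. Define
\[
\Phi(f) = \bigl( \textstyle\sum_{\mathfrak{s}' \in \mathcal{O}_j} BF(f,\mathfrak{s}') \bigr)_j \in \mathbb{Z}_2^\infty ,
\]
where $\mathcal{O}_j$ runs over suitable $M(X)$-orbits of spin$^c$-structures. For this to make sense, $BF(f,\mathfrak{s})$ should only involve the $\mathfrak{s}$ with $f^*\mathfrak{s}=\mathfrak{s}$, or else be defined via the pullback identification. The homomorphism property then follows from a gluing formula: the mapping torus of $fg$ is cobordant, over a pair of pants, to the union of the mapping tori of $f$ and $g$. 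Naturality, $BF(gfg^{-1},g\mathfrak{s}) = BF(f,\mathfrak{s})$, gives well-definedness on orbit sums. Since the target is abelian, this map factors through the abelianisation.

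The dimension constraints explain the choice of $X$. A $1$-parameter invariant needs $b_+$ even for the unframed Seiberg--Witten count, but the pin-cobordism refinement handles the cases $b_+ \equiv 3 \pmod 4$ and $b_+ \equiv 0 \pmod 4$ relevant here. For case (1), with $b_+=4n$ and $b_-=20n$, I would write $X \cong nK3 \# n(S^2\times S^2)$-type decompositions, or use $X\cong E(2n)\#\ldots$ in the spirit of Theorem 1.2(2) with $k=20n$. For case (2), I would write $X \cong (4n-1)\mathbb{CP}^2 \# (20n-1)\overline{\mathbb{CP}^2}$ as $X' \# \mathbb{CP}^2\#\overline{\mathbb{CP}^2}$-type decompositions.

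To show the image is not finitely generated, I would produce infinitely many diffeomorphisms $f_j$. These are compositions of reflections in $(-2)$- or $(\pm1)$-spheres, that is, Dehn twists, chosen so that $f_j$ acts trivially on homology after composing with a fixed one. Concretely, I would use two different smooth decompositions of $X$: one where $f$ is a product with an exotic piece, coming from exotic structures $X \cong Y_j \# (S^2\times S^2)$ with $Y_j$ distinct fibre sums or log transforms of elliptic surfaces having known Seiberg--Witten invariants. The composite of reflections in the two decompositions gives $f_j$, and the families invariant of $f_j$ computes, via a families wall-crossing or connected-sum formula, as $SW(Y_j,\mathfrak{s}) \bmod 2$. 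By choosing the $Y_j$ so that the basic classes differ along infinitely many orbits, the vectors $\Phi(f_j)$ are linearly independent in $\mathbb{Z}_2^\infty$. Hence the image is infinite-dimensional over $\mathbb{Z}_2$ and not finitely generated. The abelianisation surjects onto this image and so is not finitely generated either, which in turn shows $M(X)$ is not finitely generated.

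The main obstacle is the finiteness and well-definedness of the orbit sums, since each orbit $\mathcal{O}_j$ could be infinite. I would first try to prove that $BF(f,\mathfrak{s})\ne 0$ for only finitely many $\mathfrak{s}$. This should follow from compactness of the families moduli space over the compact base $S^1$, which bounds $c_1(\mathfrak{s})^2$ from below. The remaining step is to check that the families connected-sum formula in the pin-cobordism setting really reduces to $SW(Y_j)\bmod 2$ for the two stated ranges of $b_+$.
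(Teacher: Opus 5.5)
\textbf{There is a genuine gap in the orbit-sum step, which you treat as routine.}

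For these $X$, the lattice automorphism $-1$ is realised by a diffeomorphism (Wall's theorem \cite{wall}, since $X\cong X_0\#(S^2\times S^2)$ with $X_0$ indefinite). So every $M(X)$-invariant set of spin$^c$-structures, in particular every orbit $\mathcal{O}_j$, is closed under charge conjugation $\mathfrak{s}\mapsto-\mathfrak{s}$. Because $X$ is non-spin, $\mathfrak{s}\neq-\mathfrak{s}$.

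Take a Torelli element $t$. It fixes every $\mathfrak{s}$, and charge-conjugation symmetry gives $FBF_{X,-\mathfrak{s}}(t)=\pm FBF_{X,\mathfrak{s}}(t)$. The target $\Omega^{fr}_1\cong\Omega^{pin}_1\cong\mathbb{Z}_2$ (or $\Omega^{fr}_2\cong\mathbb{Z}_2$ when $d=0$) is $2$-torsion, so these terms cancel in pairs. Hence your orbit sums vanish identically on $T(X)$, which is exactly where your test elements $f_j\circ f_0$ live. If $\Phi$ were a homomorphism on $M(X)$, this would force $\Phi(f_j)=\Phi(f_0)$ for every $j$.

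Independently, you do not get a homomorphism on all of $M(X)$ this way. The pair-of-pants cobordism needs a families spin$^c$-structure over the pants, and that requires both $f$ and $g$ to fix $\mathfrak{s}$. The paper says explicitly that it is unclear whether the orbit strategy of \cite{bar2} works for Bauer--Furuta invariants. Its fix is to sum over charge-conjugate \emph{pairs}:
$\Psi_{m,d}(t)=\sum_{\mathfrak{s}\in\Delta_{m,d}/\sim}FBF_{X,\mathfrak{s}}(t)$,
where $\Delta_{m,d}$ consists of the $\mathfrak{s}$ with $d(\mathfrak{s})=d\in\{-1,0\}$ and $c(\mathfrak{s})$ of divisibility $m$. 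This is a homomorphism only on $T(X)$, but it is invariant under conjugation by $M(X)$.

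\textbf{Consequences for the rest of your argument.}

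Your final step ("the abelianisation surjects onto the image") must be replaced by a group-homology argument. The map $\bigoplus_m\Psi_{m,d}$ factors through the coinvariants $(T(X)_{ab})_{M(X)}$, which are therefore not finitely generated. Then use the exact sequence $H_2(\Gamma;\mathbb{Z})\to(T(X)_{ab})_{M(X)}\to M(X)_{ab}$ coming from $1\to T(X)\to M(X)\to\Gamma\to 1$. Here $\Gamma=\mathrm{Aut}(L)$ is arithmetic, so $H_2(\Gamma;\mathbb{Z})$ is finitely generated, and non-finite-generation passes to $M(X)_{ab}$.

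The non-vanishing of $\Psi_{m,d}(t_{m'})$ also needs an idea missing from your sketch. You must control every pair in $\Delta_{m,d}/\sim$, not just the one coming from the basic class. Write $\mathfrak{s}=\mathfrak{t}\#\mathfrak{u}$ with $\mathfrak{t}$ on $X_0$ and $\mathfrak{u}$ on $S^2\times S^2$. When $c(\mathfrak{u})\neq 0$, $f_0$ and $f_{m'}$ individually move $\mathfrak{s}$, so no gluing formula applies. The paper pairs such $\mathfrak{s}$ with $\mathfrak{s}'=\mathfrak{t}\#(-\mathfrak{u})$, notes $f_0\cdot\mathfrak{s}'=\mathfrak{s}$, and obtains
$FBF_{X,\mathfrak{s}}(t_{m'})+FBF_{X,\mathfrak{s}'}(t_{m'})=FBF_{X,\mathfrak{s}}(f_{m'}f_0^2f_{m'})=0$,
because $f_0^2$ and $f_{m'}^2$ are isotopic to the identity. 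With $m=(2n-1)m'-1$, only the pair built from $\pm\mathfrak{s}_{can}$ on $E(2n)_{m'}$ survives, and it contributes $\eta$.

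Finally, in case (2) a $\mathbb{CP}^2\#\overline{\mathbb{CP}^2}$-type splitting supplies no non-zero factor. The paper instead writes $X=4(n-1)\mathbb{CP}^2\#20(n-1)\overline{\mathbb{CP}^2}\#K3$ and uses $BF_{K3}=\eta$ together with $\eta^2\neq 0$ in $\Omega^{fr}_2$; this is why $n\ge 2$ is required.
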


We briefly outline the key ideas used in this paper. As mentioned earlier, familes Seiberg--Witten invariants can be used to detect exotic diffeomorphisms, but due to dimensional constraints this requires $b_+$ to be odd (assuming $b_1 = 0$). To get around this problem we instead consider the (non-equivariant) families Bauer--Furuta invariant. This is an invariant for families of $4$-manifolds which is valued in stable cohomotopy. Under the Pontryagin--Thom construction, the families Bauer--Furuta invariant is the framed cobordism class of the families moduli space of a finite-dimensional approximation of the Seiberg--Witten equations. However, such an invariant is only defined under certain restrictions which ensures that the families moduli space has a stable framing. Our method to obtain exotic diffeomorphisms necessitates considering moduli spaces which do not have a natural choice of framing. The main innovation used in this paper is to consider a Bauer--Furuta type invariant valued in {\em pin-cobordism} (more precisely, $Pin^-$-cobordism in the terminology of \cite{kt}). Using this invariant, we can carry over existing constructions of exotic diffeomorphisms to a new class of examples.

\subsection{Structure of the paper}

The structure of the paper is as follows. In Section \ref{sec:moduli} we define a (non-equivariant) families Bauer--Furuta invariant of families of $4$-manifolds valued in framed cobordism. In Section \ref{sec:fbf} we specialise to the mapping torus of a diffeomorphism to obtain a framed cobordism-valued invariant of diffeomorphisms. We prove connected sum and blowup formulas for this invariant. In Section \ref{sec:pin} we introduce a families Bauer--Furuta invariant which is instead valued in pin-cobordism. In Section \ref{sec:exoticdiff}, we use our families Bauer--Furuta invariants to detect exotic diffeomorphisms. Finally in Section \ref{sec:mcg}, we use our invariants to prove that the mapping class group of certain simply-connected $4$-manifolds is not finitely generated.

\noindent{\bf Acknowledgments.} The author was financially supported by an Australian Research Council Future Fellowship, FT230100092.

\section{$1$-parameter Bauer--Furuta invariants}

\subsection{Families Bauer--Furuta maps and framed cobordism classes}\label{sec:moduli}

We begin with a short review of the Bauer--Furuta map and families Bauer--Furuta map following \cite{bf,bk}.

First consider the case of a single $4$-manifold. Let $X$ be a compact, oriented, smooth $4$-manifold with $b_1(X) = 0$. For a spin$^c$-structure $\mathfrak{s}$ on $X$, let
\[
d(\mathfrak{s}) = \frac{ c(\mathfrak{s})^2 - \sigma(X) }{4} - b_+(X) - 1
\]
denote the expected dimension of the Seiberg--Witten moduli space for $(X,\mathfrak{s})$. Let $g$ be a Riemannian metric on $X$, let $S^{\pm}$ denote the spinor bundles corresponding to $\mathfrak{s}$ and let $A$ denote a spin$^c$-connection. The {\em Seiberg--Witten monopole map} for $(X,\mathfrak{s})$ is a non-linear Fredholm map
\[
SW : \mathbb{W} \to \mathbb{W}' 
\]
where
\begin{align}
\mathbb{W} &= L^2_k(X , S^+) \oplus L^2_k(X , T^*X),  \label{equ:w} \\
\mathbb{W}' &= L^2_{k-1}(X , S^-) \oplus L^2_{k-1}(X , \wedge^2_+ T^*X) \oplus L^2_{k-1}(X , \mathbb{R})_0. \label{equ:w'}
\end{align}
Here $k > 2$ is an integer and $L^2_{k-1}(X , \mathbb{R})_0$ denotes the subspace of $L^2_{k-1}(X , \mathbb{R})_0$ which is $L^2$-orthogonal to constant functions. The map is given by
\[
SW( \psi , a )  = ( D_{A+ia} \psi , d^+ a + i\sigma(\psi) , d^* a)
\]
where $D_{A+ia}$ is the Dirac operator corresponding to $A+ia$ and $\sigma(\psi) = \rho^{-1}( (\psi \otimes \psi^*)_0)$ is the self-dual $2$-form corresponding to the trace-free part of $\psi \otimes \psi^*$ under Clifford multiplication. The monopole map can be written as the sum of a linear Fredholm map and a non-linear term, namely $SW = l + c$, where
\[
l(\psi , a) = (D_A \psi , d^+ a , d^*a), \quad c(\psi , a) = ( ia \psi , i\sigma(\psi) , 0).
\]

The map $SW$ is $S^1$-equivariant for certain circle actions on $\mathbb{W}, \mathbb{W}'$. However in this paper we will not make use of this circle action.

Now let $W' \subset \mathbb{W}$ be a finite dimensional subspace of $\mathbb{W}$ such that $\mathbb{W} = W' + l(\mathbb{W})$ and set $W = l^{-1}(W')$. In \cite{bf} a deformation retract $\rho_{W'} : (\mathbb{W}')^+ \setminus S( W'^{\perp} ) \to (W')^+$ is constructed, where $S(W'^\perp)$ denotes the unit sphere in $W'^{\perp}$ and the $+$ superscripts denote one-point compactifications. It is further shown that if $W'$ is chosen sufficiently large, then $SW|_{W}$ misses $S(W'^{\perp})$ and so the composition
\[
f = \rho_{W'} \circ SW : W^+ \to W'^+
\]
is defined. Bauer and Furuta show that if $W'$ is sufficiently large then the stable homotopy class of $f$ does not depend on the choice of $W'$, nor on the choice of metric $g$ and reference connection $A$ and thus defines an invariant of $(X,\mathfrak{s})$ alone. This is the (non-equivariant) {\em Bauer--Furuta invariant} of $(X,\mathfrak{s})$:
\[
BF_{X,\mathfrak{s}} = [f] \in \pi^{s}_n,
\]
where $\pi^{s}_*$ denotes the stable homotopy groups of spheres and $n = dim(W) - dim(W') = ind(l) = d(\mathfrak{s})+1$. Strictly speaking, to identify the stable homotopy class of $f$ with an element of $\pi^s_n$ requires a choice of trivialisations $W \cong \mathbb{R}^a$, $W' \cong \mathbb{R}^{a'}$. Up to homotopy, there are two choices of trivialisation which correspond to the two possible orientations on $W,W'$. Moreover only the overall orientation on $W \oplus W'$ matters. We have a stable isomorphism $W - W' = ker(D_A) - coker(D_A) - H^+(X)$. Then since $ker(D_A)$, $coker(D_A)$ are complex vector spaces, they come with natural orientations. So an overall orientation on $W \oplus W'$ is equivalent to a choice of orientation of $H^+(X)$.

Under the Pontryagin--Thom isomorphism $\pi^{s}_n \cong \Omega^{fr}_n$, where $\Omega^{fr}_*$ denotes the framed cobordism ring. So we often regard the Bauer--Furuta invariant as a framed cobordism class
\[
BF_{X,\mathfrak{s}} \in \Omega^{fr}_{d(\mathfrak{s})+1}.
\]
Thought of this way, $BF_{X , \mathfrak{s}}$ is the cobordism class of the ``moduli space" $\mathcal{M} = f^{-1}(\mu)$, where $\mu$ is a regular value of $f$.

Now we consider the Bauer--Furuta invariant in the setting of families. Let $B$ be a compact smooth manifold and $\pi : E \to B$ a smooth fibre bundle with a fibrewise orientation such that the fibres of $E$ are orientation preservingly diffeomorphic to $X$. By a families spin$^c$-structure on $E$, we mean a spin$^c$-structure $\mathfrak{s}$ on the vertical tangent bundle $V = Ker(\pi_*)$. Following \cite{bk}, we can construct the families Seiberg--Witten monopole map
\[
SW : \mathbb{W} \to \mathbb{W}'
\]
where $\mathbb{W}, \mathbb{W}'$ are now Hilbert bundles over $B$ whose fibres are given by (\ref{equ:w}), (\ref{equ:w'}). Once again we can decompose $SW$ into the sum of a linear Fredholm operator and a non-linear term, namely $SW = l + c$, where $l = D \oplus (d^+ , d^*)$ is the direct sum of the Dirac operator $D$ (with respect to some family of reference spin$^c$-connections) and the Atiyah--Hitchin--Singer operator $(d^+ , d^*)$. Let $W' \subset \mathbb{W}'$ be a finite-dimensional subbundle of $\mathbb{W}'$ such that for each $b \in B$, $\mathbb{W}'_b = W'_b + l_b(\mathbb{W}_b)$ and set $W = l^{-1}(W')$. For a vector bundle $V \to B$, lset $S_B^V$ denote the unit sphere bundle of $V \oplus \mathbb{R}$. Equivalently, $S^V_B$ is the sphere bundle over $B$ obtained by taking $1$-point compactifications of each fibre of $V$. Similar to the case of a single $4$-manifold it can be shown that if $W'$ is chosen sufficiently large, then $SW|_{W}$ misses $S(W'^\perp)$ and so the composition 
\[
f = \rho_{W'} \circ SW : S^W_B \to S_B^{W'}
\]
is a well-defined map of sphere bundles over $B$. Furthermore, if $W'$ is sufficiently large, then the stable homotopy class of $f$ is independent of the choice of $W'$. Here by homotopy, we mean homotopy through maps of sphere bundles and by stabilisation, we mean the operation that replaces $f$ by the fibrewise stabilisation $f \wedge_B id_{S^U_B} : S^{W \oplus U}_B \to S_B^{W' \oplus U}$ for any vector bundle $U$. For convenience we will sometimes refer to $f : S^W_B \to S^{W'}_B$ as a {\em Bauer--Furuta map} for the family $(E , \mathfrak{s})$. It is possible to reduce $f$ to a stable homotopy class in the ordinary sense as follows. Choose a vector bundle $U$ such that $W' \oplus U$ is trivial. Then upon stabilising by $U$, we can assume that $W'$ is trivial and so $f$ can be regarded as a stable homotopy class of map $S^W_B \to S^k$, where $k = dim(W')$.

We have an equality of $K$-theory classes in $K^0(B)$:
\begin{equation}\label{equ:ww'}
W - W' = ind(l) = ind(D) - H^+(E/B),
\end{equation}
where $ind(D)$ denotes the families index of the Dirac operator corresponding to $\mathfrak{s}$ and $H^+(E/B)$ is the vector bundle whose fibre over $b \in B$ is equal to $H^+(X_b)$, the space of harmonic self-dual $2$-forms on $X_b = \pi^{-1}(b)$.

Choose a generic section $\mu : B \to S^{W'}_B$ that is transverse to $f$ and set $\mathcal{M} = f^{-1}(\mu)$. We will sometimes refer to $\mathcal{M}$ as a {\em moduli space} for the family $(E , \mathfrak{s})$, since it is a kind of approximation of the families Seiberg--Witten moduli space. If we wish to show the dependence of $\mathcal{M}$ on the choice of $\mu$ and the choice of finite-dimensional approximation $f$, we may write $\mathcal{M} = \mathcal{M}(f,\mu)$.

The normal bundle of $\mu$ can be identified with $W'$ and so we obtain an exact sequence
\begin{equation}\label{equ:tm}
0 \to T\mathcal{M} \to TS^W_B |_{\mathcal{M}} \to W' \to 0.
\end{equation}
Up to stable isomorphism we have $TS^W_B \cong TB \oplus W$ and hence $T\mathcal{M}$ is stably isomorphic to
\[
TB + W - W' = TB + ind(D) - H^+(E/B).
\]
Suppose now that $ind(D)$, $H^+(E/B)$ and $TB$ are all trivial. Then $T\mathcal{M}$ is stably trivial and $\mathcal{M}$ defines a framed cobordism class $[\mathcal{M}] \in \Omega^{fr}_{k}$, where $k = d(\mathfrak{s}) + 1 + dim(B)$. The cobordism class $[\mathcal{M}] = [\mathcal{M}(f,\mu)]$ may in general depend on the homotopy class of $\mu$ and also on the choice of trivialisations of $ind(D), H^+(E/B)$, so it does not quite define an invariant of the family $E$. In what follows we will eventually restrict to the case of $1$-parameter families where $B = S^1$, but let us first make some general observations about the depence on $[\mathcal{M}]$ on the choices made.

We need to be more precise about how trivialisations of $ind(D)$, $H^+(E/B)$ and $TB$ determine a stable framing of $\mathcal{M}(f,\mu)$. We will assume that the image of $\mu : B \to S^{W'}_B$ is disjoint from the section at infinity. Such a section can always be chosen since $S^{W'}_B$ minus the section at infinity is the vector bundle $W' \to B$. This also shows that any two sections of $S^{W'}_B$ which are disjoint from infinity are homotopic. Next, $ind(D)$ can be represented as $V-V'$ for some fixed complex vector bundles $V,V'$ on $B$ (not depending on $f$ or $\mu$). The equality (\ref{equ:ww'}) in $K$-theory can be expressed as an isomorphism of vector bundles on $B$ of the form
\[
\varphi_f : W \oplus V' \oplus H^+(E/B) \oplus \mathbb{R}^n \to V \oplus W' \oplus \mathbb{R}^n.
\]
Moreover $\varphi_f$ is compatible with stabilisation in the sense that if $W,W'$ are replaced with $W \oplus U, W' \oplus U$ for some vector bundle $U \to B$ and $f$ is replaced by $f_U = f \wedge_B id_{S^U_B} : S_B^{W \oplus U} \to S_B^{W' \oplus U}$, then $\varphi_{f_U} = \varphi_{f} \oplus id_U$. 

Since $S^W_B$ is the unit sphere bundle of $W \oplus \mathbb{R}$, we have an isomorphism $TS^W_B \oplus \mathbb{R} \cong TB \oplus W \oplus \mathbb{R}$. The exact sequence (\ref{equ:tm}) then gives an isomorphism
\[
\rho_{f} : TB \oplus W \oplus \mathbb{R} \to TS^W_B \oplus \mathbb{R} \cong T \mathcal{M}(f,\mu) \oplus W' \oplus \mathbb{R}.
\]
The map $\rho_{f}$ is compatible with stabilisation in the following sense. If $f$ is replaced by $f_U = f \wedge_B id_{S^U_B}$, then $\mu
$ can be regarded as a section of $S_B^{W' \oplus U}$ and we clearly have that $\mathcal{M}(f_U , \mu)$ is equal to the image of $\mathcal{M}(f , \mu)$ under the inclusion map $S^W_B \to S^{W \oplus U}_B$. So we will identify $\mathcal{M}(f_U , \mu)$ with $\mathcal{M}(f_U , \mu)$ without further mention. We then have that $\rho_{f_U} = \rho_f \oplus id_U$.

Composing $\rho_{f}^{-1}$ and $\varphi_{f}$, we get an isomorphism
\begin{align*}
T\mathcal{M}(f,\mu) \oplus W' \oplus \mathbb{R} \oplus V' \oplus H^+(E/B) \oplus \mathbb{R}^n &\to TB \oplus W \oplus \mathbb{R} \oplus V' \oplus H^+(E/B) \oplus \mathbb{R}^n \\
& \to TB \oplus V \oplus W' \oplus \mathbb{R}^n \oplus \mathbb{R}.
\end{align*}

Given trivialisations of $V,V', H^+(E/B)$ and $TB$ we then get a stable trivialisation
\[
\tau_{f} : T\mathcal{M}(f,\mu) \oplus \mathbb{R}^{u} \oplus W' \to \mathbb{R}^{v} \oplus W'
\]
where $u = dim(V') + dim(H^+(E/B)) + n+1$, $v = dim(V) + n + 1$. The isomorphism $\tau_{f}$ is compatible with stabilisation since $\varphi_{f}$ and $\rho_{f}$ are, so we get a stable trivialisation of $T\mathcal{M}(f,\mu)$ that is compatible with stabilisation $f \mapsto f_U$. It follows that the framed cobordism class $[\mathcal{M}(f,\mu)] \in \Omega^{fr}_*$ does not depend on the choice of finite-dimensional approximation $f$.

We have already seen that any two sections $\mu,\mu'$ of $S^{W'}_B$ which are disjoint from infinity are homotopic. This implies that $[\mathcal{M}(f,\mu)] = [\mathcal{M}(f,\mu')]$. In more detail, suppose $\mu,\mu'$ are two sections of $S^{W'}_B$ which are transverse to $f$ and disjoint from infinity. By standard differential topology we can find a section $\mu_t$ of the pullback $S^{W'}_{[0,1] \times B}$ of $S^{W'}_B$ to $[0,1] \times B$ such that $\mu_0 = \mu$, $\mu_1 = \mu'$, $\mu_t$ is disjoint from infinity and $\mu_t$ is transverse to $\widehat{f}$, where $\widehat{f}$ is the map $S^W_{[0,1] \times B} \to S^{W'}_{[0,1] \times B}$ which equals $f$ for each $t \in [0,1]$. Then $\mathcal{M}(\widehat{f} , \mu_t )$ is a cobordism from $\mathcal{M}(f,\mu_0)$ to $\mathcal{M}(f,\mu_1)$. Using the procedure described above we get a stable framing on $\mathcal{M}(\widehat{f} , \mu_t)$ which agrees with the stable framings on $\mathcal{M}(f,\mu_0), \mathcal{M}(f,\mu_1)$ on the endpoints. Hence $[\mathcal{M}(f,\mu_0)] = [\mathcal{M}(f,\mu_1)]$ in $\Omega^{fr}_*$.

To summarise, we have constructed a framed cobordism class $[\mathcal{M}(f,\mu)] \in \Omega^{fr}_k$, where $k = d(\mathfrak{s}) + 1 + dim(B)$ which depends only on trivialisations of $V,V',H^+(E/B)$ and $TB$. In the case $B = S^1$, $V,V'$ are always trivial, because they are complex vector bundles bundle and of course $TS^1$ is trivial. So if $H^+(E/B)$ is trivial, then we obtain a framed cobordism class $[\mathcal{M}] \in \Omega^{fr}_{d(\mathfrak{s})+2}$. Since $[S^1 , GL(n , \mathbb{R}) ] = \pi_1(GL(n,\mathbb{R})) \cong \mathbb{Z}_2$ for all $n > 2$, there is up to homotopy two stable framings of $\mathcal{M}$ obtained in the manner described above. Let $\xi,\xi'$ denote the two framings and $[\mathcal{M}(\xi)], [\mathcal{M}(\xi')] \in \Omega^{fr}_{d(\mathfrak{s})+2}$ the two corresponding framed cobordism classes.

\begin{proposition}\label{prop:xixi'}
Suppose that $B = S^1$ and that $H^+(E/B)$ is trivial. Then
\[
[\mathcal{M}(\xi)] - [\mathcal{M}(\xi')] = \eta \cdot BF_{X,\mathfrak{s}} \in \Omega^{fr}_{d(\mathfrak{s})+2}
\]
where $\eta$ denotes the generator of $\Omega^{fr}_1 \cong \mathbb{Z}_2$.
\end{proposition}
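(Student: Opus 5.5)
The two framings $\xi,\xi'$ differ by the action of the nontrivial element $\alpha \in [\mathcal{M}, O] $ pulled back from the generator of $[S^1, GL(N,\mathbb{R})] \cong \mathbb{Z}_2$ along the projection $\pi : \mathcal{M} \to S^1$. Indeed, the framing $\tau_f$ was built from trivialisations of $V,V',H^+(E/B)$ and $TB$, all pulled back from $B=S^1$. Changing the trivialisation of $H^+(E/B)$ (say) by the loop $\gamma : S^1 \to GL(b_+,\mathbb{R})$ representing the generator of $\pi_1$ changes $\tau_f$ by composition with $\gamma \circ \pi$. So the plan is to express the framed class $[\mathcal{M}(\xi')]$ as $[\mathcal{M}(\xi)]$ twisted by $\gamma\circ\pi$, and compute the difference via a general formula for re-framing along a map to $S^1$.

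\textbf{Step 1: reduce to a local model.} Choose $f,\mu$ so that $\pi|_{\mathcal{M}} : \mathcal{M} \to S^1$ is transverse to a point $p \in S^1$. Then $F = \mathcal{M} \cap \pi^{-1}(p)$ is the moduli space $f_p^{-1}(\mu(p))$ for the single manifold $X_p$. With the framing induced from $\xi$ (restricted to the normal direction, which is the pullback of $T_pS^1$), it represents $BF_{X,\mathfrak{s}} \in \Omega^{fr}_{d(\mathfrak{s})+1}$; this uses the description of $BF$ as the framed cobordism class of $f^{-1}(\mu)$. Next, homotope $\gamma$ to a loop that is the identity outside a small interval $I \ni p$. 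Then $\xi$ and $\xi'$ agree outside $\pi^{-1}(I) \cong F \times I$. On this collar they differ by the clutching map $F \times I \to GL(N)$, $(x,t) \mapsto \gamma(t)$.

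\textbf{Step 2: the standard re-framing lemma.} For a framed manifold $M$ and a map $g : M \to SO(N)$, the standard fact is $[M, \xi\cdot g] - [M,\xi] = $ the image under the $J$-homomorphism and Pontryagin--Thom pairing. Concretely, cut $M$ along $F$. The difference $[\mathcal{M}(\xi')] - [\mathcal{M}(\xi)]$ is then represented by $F \times S^1$, with the framing of $F$ times the framing of $S^1$ twisted by $\gamma$, i.e.\ the Lie framing. To justify this, glue the cobordism $\mathcal{M}\times[0,1]$ (with framing interpolating outside the collar) to see that the difference is the framed class obtained by clutching on the collar. That class is $[F]\cdot[S^1,\gamma] = BF_{X,\mathfrak{s}} \cdot J(\gamma) = \eta \cdot BF_{X,\mathfrak{s}}$, since $J : \pi_1(SO) \to \pi_1^s$ sends the generator to $\eta$. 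Signs are irrelevant because $2\eta = 0$.

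\textbf{Main obstacle.} The delicate step is making the cut-and-paste rigorous: showing that the difference of the two framed classes is exactly the product class $[F]\times [S^1_{Lie}]$. I would do this by building an explicit framed cobordism. Take $\mathcal{M}\times[0,1]$ union a handle region over $F \times I$; equivalently, use the Pontryagin--Thom picture, where changing the framing by $g\circ \pi$ corresponds to composing the collapse map with the map $S^{N}\wedge \mathcal{M}_+ \to S^N$ induced by $\pi$ and $J(\gamma)$. Since $\gamma$ factors through $S^1 \to S^1/(S^1\setminus I)$, the difference is the transfer of $\eta$ along the degree-one collapse onto the fibre $F$. One must also check that the framing on $F$ induced from $\xi$ is the Bauer--Furuta framing, i.e.\ that restricting $\tau_f$ to a fibre recovers the single-manifold trivialisations of $\ker D_A$, $\operatorname{coker} D_A$ and $H^+(X)$. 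This holds by construction, because all the data are pulled back from $B$.
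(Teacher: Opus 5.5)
Your argument is correct, but it is organised differently from the paper's. You work on the framed manifold $\mathcal{M}$ itself. You recognise $\xi'$ as $\xi$ twisted by $\gamma\circ\pi$ and localise $\gamma$ near a regular value $p$ of $\pi|_{\mathcal{M}}$. You then apply a general re-framing lemma, which your Pontryagin--Thom sketch does prove. The twisted map $S^N\wedge S^1_+\to S^N$, $(v,t)\mapsto\gamma(t)v$, differs stably from the untwisted one by $\eta$ composed with the collapse onto $S^{N+1}$. Precomposing with $\mathrm{id}\wedge\pi_+$ and the collapse map of $\mathcal{M}$ then gives $\eta\cdot[F]$.

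The paper does not localise the twist on $\mathcal{M}$. It pulls the family back to $S^1\times[0,1]$ and removes a disc to obtain a pair of pants $\Sigma$. It then twists the framing of the families moduli space $\mathcal{M}_\Sigma$ by the homomorphism $H_1(\Sigma;\mathbb{Z})\to\mathbb{Z}_2$ that vanishes on the incoming copy $S^1_1$ and is nontrivial on the small circle $S^1_2$ and the outgoing copy $S^1_3$. The extra term appears directly as the moduli space $\mathcal{M}_0\times S^1_2$ of the trivial family over the small circle, with the Lie framing on the circle factor. So no $J$-homomorphism or stable pairing is needed, and the identification with $BF_{X,\mathfrak{s}}$ is immediate.

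Your route isolates a fact about framed manifolds over $S^1$ that has nothing to do with Seiberg--Witten theory. The paper's formulation, twisting by a class in $H^1(\Sigma;\mathbb{Z}_2)$, carries over at once to pin structures, which also form an $H^1(\,\cdot\,;\mathbb{Z}_2)$-torsor. That is how the paper gets the pin analogue.

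On your ``main obstacle'': the explicit cobordism you want is the paper's in disguise. With $p$ regular and the removed disc $D$ small and centred at $(p,\tfrac12)$, $\mathcal{M}_\Sigma$ is $\mathcal{M}\times[0,1]$ with a tubular neighbourhood $F\times D$ of $F\times\{\tfrac12\}$ removed. The framed cobordism is this excised cylinder with the twisted framing. The plain cylinder $\mathcal{M}\times[0,1]$ with a framing ``interpolating'' between $\xi$ and $\xi'$ cannot work as you phrased it, because $\xi$ and $\xi'$ are not homotopic. Removing $F\times D$ is exactly what allows the framing to change, at the cost of the extra boundary component $F\times S^1$.
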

\begin{remark}
Notice that since $\eta$ is $2$-torsion, the order in which we choose the two stable framings $\xi,\xi'$ is irrelevant.
\end{remark}
\begin{proof}
Take the family $E \to B = S^1$ and pull it back to the cylinder $S^1 \times [0,1]$. The base $S^1 \times [0,1]$ can be viewed as a cobordism from $S^1$ to $S^1$. Now let $\Sigma$ be obtained by removing a ball from $S^1 \times [0,1]$. We can view $\Sigma$ as a sphere with three discs removed. Let $E_\Sigma \to \Sigma$ be the family obtained by pulling $E$ back to $S^1 \times [0,1]$ and then restricting to $\Sigma$. We think of $\Sigma$ as a cobordism with two ingoing circle boundaries $S^1_1, S^1_2$ and one outgoing boundary component $S^1_3$, where $S^1,S^1_3$ are the boundary components of $S^1 \times [0,1]$ and $S^1_2$ is the third boundary component resulting from removing a disc. Then $E|_{S^1_1} \cong E|_{S^1_3} \cong E$, whereas $E|_{S^1_2}$ is the trivial family $X \times S^1$.

Let $f_\Sigma$ denote the families Bauer--Furuta map for $E_\Sigma$ and $\mathcal{M}_\Sigma = f_\Sigma^{-1}(\mu_\Sigma)$ the corresponding moduli space. Then $\mathcal{M}_\Sigma$ is a cobordism from $\mathcal{M}_{\Sigma}|_{S^1_1} \cup \mathcal{M}_{\Sigma}|_{S^1_2}$ to $\mathcal{M}_{\Sigma}|_{S^1_3}$. The map $f_\Sigma$ can be obtained by taking the families Bauer--Furuta map $f$ for $E$, pulling it back to $S^1 \times [0,1]$ and then restricting to $\Sigma$. If $\mathcal{M} = f^{-1}(\mu)$ denotes the moduli space for $f$, then choosing $\mu_\Sigma$ to be the restriction of the pullback of $\mu$, it follows that $\mathcal{M}_\Sigma$ is given by taking $\mathcal{M}$, pulling it back to $S^1 \times [0,1]$ and restricting to $\Sigma$. Therefore we can assume that $\mathcal{M}_\Sigma |_{S^1_1} = \mathcal{M}$, $\mathcal{M}_\Sigma|_{S^1_3} = \mathcal{M}$ and $\mathcal{M}|_{S^1_2} = \mathcal{M}_0 \times S^1_2$, where $\mathcal{M}_0$ denotes the moduli space for the unparametrised Bauer--Furuta map of $(X , \mathfrak{s})$.

Fix an orientation on $H^+(X)$. This determines a framing $\xi_0$ on the unparametrised moduli space $\mathcal{M}_0$ such that the class of $[\mathcal{M}_0 , \xi_0] \in \Omega^{fr}_{d(\mathfrak{s})+1}$ is equal to $BF_{X,\mathfrak{s}}$. Let $\hat{\xi}_0 = \xi_0 \times \xi_{triv}$ denote the framing of $\mathcal{M}_0 \times S^1_2$ which is given by the product of the framing $\xi_0$ on $\mathcal{M}_0$ and the trivial framing $\xi_{triv}$ of $S^1_2$ (the framing that extends over the disc bounding $S^1_2$).

Let $\xi,\xi'$ denote the two framings on $\mathcal{M}$ constructed as above. It is easily seen that there are corresponding framings $\xi_\Sigma, \xi'_\Sigma$ on $\mathcal{M}_\Sigma$ with the property that $\xi_\Sigma |_{S^1_1} = \xi$, $\xi_\Sigma|_{S^1_3} = \xi$, $\xi_\Sigma |_{S^1_2} = \hat{\xi} \times \xi_{triv}$. Similarly $\xi'_\Sigma |_{S^1_1} = \xi'$, $\xi'_\Sigma|_{S^1_3} = \xi'$, $\xi'_\Sigma |_{S^1_2} = \hat{\xi} \times \xi_{triv}$. We have that $H_1(\Sigma ; \mathbb{Z}) \cong \mathbb{Z}^2$, generated by $[S^1_1], [S^1_2], [S^1_3]$ subject to the relation $[S^1_3] = [S^1_1] + [S^1_2]$. The set of framings on $\mathcal{M}$ is acted upon by $Hom( \pi_1(\Sigma) , \mathbb{Z}_2) \cong Hom( H_1(\Sigma ; \mathbb{Z}) , \mathbb{Z}_2)$. So we can change the framing $\xi_\Sigma$, by a homomorphism $\phi : H_1(\Sigma ; \mathbb{Z}) \to \mathbb{Z}_2$ such that $\phi[ S^1_1] = 0$, $\phi[S^1_2] = \phi[S^1_3] = 1$. This changes the framings on $\mathcal{M}_\Sigma |_{S^1_2}$ and $\mathcal{M}_\Sigma|_{S^1_3}$ but leaves the framing on $\mathcal{M}_\Sigma|_{S^1_1}$ unchanged. If $\xi''$ denotes the resulting framing on $\mathcal{M}_\Sigma$, then $\xi''|_{S^1_1} = \xi$, $\xi''|_{S^1_2} = \xi_0 \times \eta$ and $\xi''|_{S^1_3} = \xi'$. Here $\eta$ denotes the Lie group framing on $S^1_2$. Note that $\eta$ is the generator of $\Omega^{fr}_{1}$. Now $( \mathcal{M}_\Sigma , \xi'')$ is a framed cobordism from $(\mathcal{M} , \xi) \cup ( \mathcal{M}_0 \times S^1 , \xi_0 \times \eta)$ to $(\mathcal{M} , \xi')$, hence we have an equality
\[
[\mathcal{M}(\xi)] + \eta \cdot BF_{X,\mathfrak{s}} = [\mathcal{M}(\xi')].
\]

\end{proof}

As a consequence of Proposition \ref{prop:xixi'}, if the ordinary Bauer--Furuta invariant $BF_{X,\mathfrak{s}}$ is zero (or more generally, if $\eta \cdot BF_{X,\mathfrak{s}} = 0$), then the framed cobordism class of $\mathcal{M}$ does not depend on the choice of trivialisations and so defines an invariant of the family $(E , \mathfrak{s})$. We denote this invariant by $FBF(E , \mathfrak{s})$ and call it the {\em families Bauer--Furuta invariant of $(E , \mathfrak{s})$}. In what follows we will re-interpret this as an invariant of diffeomorphisms of $X$ via the mapping torus construction.

\subsection{A families Bauer--Furuta invariant of diffeomorphisms}\label{sec:fbf}

Let $f$ be an orientation preserving diffeomorphism of $X$. Define the mapping torus $E(f)$ of $f$ to be $X \times [0,1]/\! \sim$ where $(x,0) \sim (f(x),1)$. Let $\pi : E(f) \to S^1 = \mathbb{R}/\mathbb{Z}$ be the map $\pi(x,t) = t$. Then $\pi : E(f) \to S^1$ is a smooth fibre  bundle over $S^1$ with fibres diffeomorphic to $X$.

\begin{lemma}\label{lem:famspinc}
There is a bijection between families spin$^c$-structures on $E(f)$ and $f$-invariant spin$^c$-structures on $X$. For any $t \in S^1$, let $i_t : X \to E(f)$ be the inclusion $i_t(x) = (x,t)$. Then the bijection is given by the restriction map $\mathfrak{s} \mapsto \mathfrak{s}|_{X_t} = i_t^* \mathfrak{s}$.
\end{lemma}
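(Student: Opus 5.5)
The plan is to identify families spin$^c$-structures on $E(f)$ with spin$^c$-structures on the vertical tangent bundle $V = \ker(\pi_*)$. Since $E(f) = X \times [0,1]/\!\sim$, we have $V = TX \times [0,1]/\!\sim$, glued via $df$. First I check that restriction to a fibre lands in $f$-invariant structures. Let $\mathfrak{s}$ be a families spin$^c$-structure on $E(f)$. The inclusions $i_t$ for $t \in [0,1]$ are homotopic through maps covered by bundle isomorphisms $TX \to V|_{X_t}$. Hence $i_t^*\mathfrak{s}$ is, up to isomorphism, independent of $t$. Now compare $t=0$ with $t=1$: under the gluing $(x,0)\sim(f(x),1)$ we have $i_0 = i_1 \circ f$, so $i_0^*\mathfrak{s} = f^* i_1^* \mathfrak{s}$. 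Combined with homotopy invariance, this gives $f^*\mathfrak{s}_X \cong \mathfrak{s}_X$ for $\mathfrak{s}_X = i_0^*\mathfrak{s}$, so the restriction is $f$-invariant.

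Conversely, for surjectivity, let $\mathfrak{s}_X$ be an $f$-invariant spin$^c$-structure, so there is an isomorphism $\tilde f : f^*\mathfrak{s}_X \to \mathfrak{s}_X$ covering $df$. I pull $\mathfrak{s}_X$ back to $X\times[0,1]$ and glue the ends using $\tilde f$. This produces a spin$^c$-structure on $V$ whose restriction to the fibre is $\mathfrak{s}_X$.

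The main point is injectivity: the lift $\tilde f$ is unique only up to automorphisms of $\mathfrak{s}_X$, that is, gauge transformations $X \to S^1$. Different lifts could a priori produce different structures on $E(f)$. I would argue that isomorphism classes of spin$^c$-structures on $V$ form a torsor over $H^2(E(f);\mathbb{Z})$, and compare with the Wang sequence
\[
H^1(X) \to H^2(E(f)) \to H^2(X) \xrightarrow{f^*-1} H^2(X).
\]
Since $b_1(X)=0$, we have $H^1(X;\mathbb{Z})=0$, so restriction $H^2(E(f)) \to H^2(X)$ is injective. In the gluing picture this is the same fact seen directly: changing $\tilde f$ by a gauge transformation $g: X\to S^1$ yields an isomorphic structure, because $g$ is null-homotopic and the homotopy can be absorbed along $[0,1]$. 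Hence two families structures with isomorphic restrictions differ by a class in $H^2(E(f))$ restricting to zero, which must vanish. So restriction is injective, and this completes the bijection.

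For the torsor step I will use the standard fact that spin$^c$-structures on an oriented vector bundle form an $H^2(-;\mathbb{Z})$-torsor, and that this action is natural under pullback along $i_t$. The hypothesis $b_1(X)=0$ (indeed simple connectivity) is exactly what is needed at the injectivity step, and I expect that step to be the only real obstacle.
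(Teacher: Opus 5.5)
Your proposal is correct and follows essentially the same route as the paper. Surjectivity comes from gluing the pulled-back structure along a lift of $f$; the paper phrases this as descending from $X \times \mathbb{R}$ via $\widehat{f}(x,t) = (f(x),t+1)$. Injectivity comes from the fact that, since $b_1(X)=0$, the Wang/Serre sequence makes $H^2(E(f);\mathbb{Z}) \to H^2(X;\mathbb{Z})$ injective, so the difference line bundle is trivial. You also check that restriction actually lands in $f$-invariant structures, which the paper leaves implicit.
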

\begin{proof}
Without loss of generality it suffices to show that the restriction map $\mathfrak{s} \mapsto \mathfrak{s}|_{X_0}$ is a bijection. For injectivity, let $\mathfrak{s},\mathfrak{s}'$ be two spin$^c$-structures such that $\mathfrak{s}|_{X_0} \cong \mathfrak{s}'|_{X_0}$. Then $\mathfrak{s}' \cong L \otimes \mathfrak{s}$ for some line bundle $L$ such that $L|_{X_0} \cong \mathbb{C}$. Since $b_1(X) = 0$, the Serre spectral sequence applied to $E(f) \to S^1$ implies that $H^2(E ; \mathbb{Z}) \cong H^2(X ; \mathbb{Z})^f = \{ \alpha \in H^2(X ; \mathbb{Z}) \; | \; f^*(\alpha) = \alpha \}$. In particular, the restriction map $H^2(E ; \mathbb{Z}) \to H^2( X_0 ; \mathbb{Z})$ is injective. Hence $L \cong \mathbb{C}$ and $\mathfrak{s} \cong \mathfrak{s}'$.

It remains to show surjectivity. Let $\mathfrak{s}$ be an $f$-invariant spin$^c$-structure on $X$. Then $\mathfrak{s}$ pulls back to a spin$^c$-structure $\widehat{\mathfrak{s}}$ on the vertical tangent bundle of the trivial family $\widehat{E} = X \times \mathbb{R} \to \mathbb{R}$. Let $\widehat{f} : X \times \mathbb{R} \to X \times \mathbb{R}$ be given by $\widehat{f}(x,t) = (f(x) , t+1)$. Then since $f^*(\mathfrak{s}) \cong \mathfrak{s}$, it follows that $\widehat{f}^*(\widehat{\mathfrak{s}}) \cong \widehat{\mathfrak{s}}$. Then by choosing a lift of $\widehat{f}$ to the spinor bundles of $\widehat{\mathfrak{s}}$, we can then descend $\widehat{\mathfrak{s}}$ to the quotient $\widehat{E}/\langle \widehat{f} \rangle = E(f)$.
\end{proof}

In light of Lemma \ref{lem:famspinc} we will identify families spin$^c$-structrues on $E(f)$ with $f$-invariant spin$^c$-structures on $X$ without further mention.

Given an orientation preserving diffeomorphism $f$ of $X$ and an $f$-invariant spin$^c$-structure $\mathfrak{s}$, we obtain a $1$-parameter family $E(f) \to S^1$ and $\mathfrak{s}$ determines a families spin$^c$-structure on $E(f)$, which we continue to denote by $\mathfrak{s}$. The bundle $H^+(E(f)/S^1)$ is trivial or non-trivial according to its first Stiefel--Whitney class. The space of oriented, maximal positive definite subspaces of $H^2(X ; \mathbb{R})$ has two connected components. Define $sgn_+(f) = \pm 1$ according to whether $f$ preserves or swaps the two components. Then it is easily seen that $H^+(E(f)/S^1)$ is trivial if and only if $sgn_+(f) = 1$. Assuming that $sgn_+(f) = 1$ and that $BF_{X,\mathfrak{s}} = 0$, we can then take the families Bauer--Furuta invariant of $(E(f) , \mathfrak{s})$ which is then an invariant of the triple $(X , \mathfrak{s} , f)$

\begin{definition}
Let $X$ be a compact, oriented, smooth $4$-manifold with $b_1(X) = 0$, $f : X \to X$ an orientation preserving diffeomorphism such that $sgn_+(f) = 1$ and $\mathfrak{s}$ an $f$-invariant spin$^c$-structure. Assume further that $BF_{X , \mathfrak{s}} = 0$ (or more generally, that $\eta \cdot BF_{X , \mathfrak{s}} = 0$). Then the {\em (non-equivariant) families Bauer--Furuta invariant of $(X,\mathfrak{s},f)$}, denoted 
\[
FBF_{X , \mathfrak{s}}(f) \in \Omega^{fr}_{d(\mathfrak{s}) + 2}
\]
is the families Bauer--Furuta invariant of $(E(f) , \mathfrak{s})$.
\end{definition}

Notice that $FBF_{X , \mathfrak{s}}(f)$ depends only on the smooth isotopy class of $f$. Indeed, to a smooth isotopy $f_t$ we can construct a smooth family over $[0,1] \times S^1$ whose restriction to $\{t\} \times S^1$ is $E(f_t)$. In turn, this yields a homotopy of Bauer--Furuta maps associated to each of the families $E(f_t)$.

\begin{proposition}\label{prop:compact1}
Let $X$ be a compact, oriented, smooth $4$-manifold with $b_1(X)=0$, $f : X \to X$ an orientation preserving diffeomorphism such that $sgn_+(f) = 1$. Then there are only finitely many spin$^c$-structures on $X$ such that $FBF_{X,\mathfrak{s}}(f)$ is defined and non-zero.
\end{proposition}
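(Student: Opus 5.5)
The plan is to reduce finiteness to the standard compactness argument for families Seiberg--Witten moduli spaces, in two steps.

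\textbf{Step 1: reduce to the honest moduli space.} If the families Seiberg--Witten equations on $E(f)$ (for a fixed family of metrics $g_t$ and a generic family of self-dual perturbations) have no solutions for the spin$^c$-structure $\mathfrak{s}$, then $FBF_{X,\mathfrak{s}}(f)=0$. To see this, recall that $f$ is obtained by restricting $SW$ to $W$ and composing with $\rho_{W'}$. If $SW$ has no zeros on the fibres over $S^1$, then by the standard Bauer--Furuta properness argument $f$ misses $0$ after passing to a sufficiently large $W'$. More precisely, $SW^{-1}(0)$ empty, together with the bounds on the relevant balls, means $\rho_{W'}\circ SW|_W$ avoids a neighbourhood of the zero section of $S^{W'}_B$. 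Taking $\mu$ to be the zero section (or a small perturbation of it, which stays in the same homotopy class of sections disjoint from infinity), we get $\mathcal{M}(f,\mu)=\emptyset$. Hence the framed cobordism class vanishes, and it is independent of choices by the discussion preceding Proposition \ref{prop:xixi'}.

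\textbf{Step 2: only finitely many spin$^c$-structures admit solutions.} Over the compact base $S^1$, fix a continuous family of metrics $g_t$ and perturbations $\eta_t$ of bounded norm. Any solution $(A,\psi)$ on $X_t$ satisfies the Weitzenb\"ock pointwise bound $|\psi|^2\le \max(0,-s_{g_t})+C|\eta_t|$, which is uniform in $t$ by compactness of $S^1$. This yields a uniform bound on $\|F_A^+\|_{L^2}$. Since
\[
c_1(\mathfrak{s})^2 = \|F_A^+\|^2/4\pi^2 - \|F_A^-\|^2/4\pi^2,
\]
we obtain $|c_1(\mathfrak{s})^+_{g_t}|^2\le C$. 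Moreover, nonemptiness in a family over a $1$-dimensional base requires the expected dimension to satisfy $d(\mathfrak{s})+1\ge 0$. For generic perturbations, a class with $d(\mathfrak{s})+1<0$ would have empty parametrised moduli space, and Step 1 then gives $FBF=0$ (alternatively, $\Omega^{fr}$ vanishes in negative degrees). This gives a lower bound $c_1(\mathfrak{s})^2\ge \sigma(X)+4b_+(X)-4$. Combining $c_1^2 = |c^+|^2-|c^-|^2$ with both bounds, $|c^-_{g_t}|^2$ is also bounded. So $c_1(\mathfrak{s})$ lies in a bounded subset of $H^2(X;\mathbb{R})$ with respect to the norm induced by $g_0$, and hence lies in a finite subset of the lattice. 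Since $b_1=0$ and we may ignore torsion subtleties (finitely many torsion classes), $\mathfrak{s}$ is determined by $c_1(\mathfrak{s})$ up to finitely many choices.

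\textbf{Main obstacle.} The subtle step is Step 1: making rigorous that the finite-dimensional approximation has no zeros when the genuine families equations have none. This uses the Bauer--Furuta estimate that zeros of $l+p_{W'}c$ on a large ball lie near genuine zeros once $W'$ is large. I would cite \cite{bf,bk} for this, noting that the perturbation $\eta_t$ amounts to translating $\mu$ by a section that is homotopic, through sections disjoint from infinity, to the zero section.
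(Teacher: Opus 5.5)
Your argument is correct and is essentially the paper's. Both routes first show that $FBF_{X,\mathfrak{s}}(f)$ vanishes whenever the honest families Seiberg--Witten equations on $E(f)$ have no solutions, and then use compactness to leave only finitely many $\mathfrak{s}$ with solutions; the paper gets the first step by quoting Baraglia--Konno for a homotopy to a finite-dimensional approximation whose zero set is $SW^{-1}(0)$, whereas you sketch the Bauer--Furuta approximation estimate directly.

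Your Step 2 is actually more complete than the paper's one-line appeal to compactness. The Weitzenb\"ock bound alone only controls $c_1(\mathfrak{s})^+$, so the lower bound $c_1(\mathfrak{s})^2\ge\sigma(X)+4b_+(X)-4$, which follows from $\Omega^{fr}_{d(\mathfrak{s})+2}=0$ for $d(\mathfrak{s})<-2$, is genuinely needed. You should rest that bound on this grading argument rather than on the generic-perturbation dimension count, which can fail when reducibles appear (e.g.\ for $b_+(X)\le 1$).
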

\begin{proof}
Let $E(f)$ be the mapping torus of $f$. A family of fibrewise metrics on $E(f)$ can be thought of as a path $\{ g_t \}_{t \in [0,1]}$ of Riemannian metrics on $X$ such that $f^*(g_1) = g_0$. Compactness properties of the Seiberg--Witten equations implies that there are only finitely many spin$^c$-structures $\mathfrak{s}$ on $X$ such that the Seiberg--Witten equations for $(X , \mathfrak{s} , g_t)$ have a solution for some $t \in [0,1]$ and with the zero perturbation. Let $S = \{ \mathfrak{s}_1, \dots , \mathfrak{s}_r \}$ be the set of spin$^c$-structures for which there is a solution. Let $\mathfrak{s}$ be a spin$^c$-structure such that $FBF_{X,\mathfrak{s}}(f)$ is defined (so $f^*(\mathfrak{s}) = \mathfrak{s}$ and $\eta \cdot BF_{X , \mathfrak{s}} = 0$). Let $SW : \mathbb{W} \to \mathbb{W}'$ be the corresponding families Seiberg--Witten monopole map. If $\mathfrak{s}$ does not belong to $S$, then the families moduli space $SW^{-1}(0)$ is empty. Let $h : S_B^W \to S_B^{W'}$ denote a finite dimensional approximation of $SW$. From the proof of \cite[Theorem 2.24]{bk}, $W,W'$ can be chosen in such a way that there exists a map $h' : S_B^W \to S_B^{W'}$ which is homotopic to $h$ and a section $\mu' : B \to S_B^{W'}$ such that $(h')^{-1}(\mu')$ and $SW^{-1}(0)$ are diffeomorphic (the map we are calling $h'$ corresponds to the map which was called $\widetilde{f}_2$ in \cite[\textsection 2]{bk}). In particular, since $SW^{-1}(0)$ is empty, so is $(h')^{-1}(\mu')$. It follows that the moduli space $\mathcal{M} = h^{-1}(\mu)$ for $h$ represents the trivial framed-cobordism class, hence $FBF_{X, \mathfrak{s}}(f) = 0$.
\end{proof}

For a compact, oriented smooth $4$-manifold $X$, let $Diff(X)$ denote the group of orientation preserving diffeomorphisms of $X$, with the $\mathcal{C}^\infty$-topology. Define the mapping class group of $X$ to be $M(X) = \pi_0(Diff(X))$, the group of isotopy classes of diffeomorphisms. Define $M_+(X)$ to be the subgroup of $M(X)$ consisting of isotopy classes of diffeomorphisms satisfying $sgn_+(f) = 1$. If $\mathfrak{s}$ is a spin$^c$-structure on $X$, let $M(X,\mathfrak{s})$ be the subgroup of $M(X)$ consisting of isotopy classes of diffeomorphisms preserving the isomorphism class of $\mathfrak{s}$. We also set $M_+(X,\mathfrak{s}) = M_+(X) \cap M(X , \mathfrak{s})$. Define the Torelli group $T(X)$ to be the subgroup of $M(X)$ consisting of isotopy classes of diffeomorphisms which are continuously isotopic to the identity. When $X$ is simply-connected, $T(X)$ is also the subgroup of $M(X)$ consitsing of those isotopy classes which act trivially on $H^2(X ; \mathbb{Z})$ \cite{qui}.

Since $FBF_{X,\mathfrak{s}}(f)$ depends only on the isotopy class of $f$, we can regard $FBF_{X,\mathfrak{s}}$ as a map
\[
FBF_{X , \mathfrak{s}} : M_+(X , \mathfrak{s}) \to \Omega^{fr}_{d(\mathfrak{s})+2}.
\]

\begin{proposition}\label{prop:hom}
The map $FBF_{X,\mathfrak{s}} : M_+(X , \mathfrak{s}) \to \mathbb{Z}_2$ is a group homomorphism.
\end{proposition}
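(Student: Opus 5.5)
The proof will use the same kind of cobordism over a pair of pants as in Proposition \ref{prop:xixi'}, now with the family varying over the three boundary circles. Let $f,g \in M_+(X,\mathfrak{s})$ (the target is written $\mathbb{Z}_2$, though the argument works in $\Omega^{fr}_{d(\mathfrak{s})+2}$).

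\textbf{Step 1: build a family over the pair of pants.} Let $\Sigma$ be a sphere with three discs removed. Then $\pi_1(\Sigma)$ is free on two loops $a,b$, and the third boundary circle represents $ab$. Since $\pi_1(\Sigma)$ is free, there is a homomorphism $\pi_1(\Sigma) \to Diff(X)$ sending $a \mapsto f$ and $b \mapsto g$. Its flat $X$-bundle $E_\Sigma \to \Sigma$ restricts to the boundary circles as $E(f)$, $E(g)$ and $E(f\circ g)$, with the last one up to orientation conventions on the circle.

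\textbf{Step 2: extend the spin$^c$-structure and trivialise $H^+$.} Because $f$ and $g$ both preserve $\mathfrak{s}$, the argument of Lemma \ref{lem:famspinc} extends $\mathfrak{s}$ to a families spin$^c$-structure on $E_\Sigma$. Choose lifts of $f$ and $g$ to the spinor bundles; these assemble into the flat spinor bundle because the group is free, so no relations need checking. The bundle $H^+(E_\Sigma/\Sigma)$ is trivial because its monodromy lies in the orientation-preserving part, given $sgn_+(f)=sgn_+(g)=1$. The index bundle $ind(D)$ is a complex bundle on a surface with boundary, which is homotopy equivalent to a wedge of circles, so it is trivial. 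Likewise $T\Sigma$ is trivial.

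\textbf{Step 3: produce the cobordism.} Take a families Bauer--Furuta map $f_\Sigma$ over $\Sigma$ together with a generic section $\mu_\Sigma$ disjoint from infinity and transverse to $f_\Sigma$. Then $\mathcal{M}_\Sigma = f_\Sigma^{-1}(\mu_\Sigma)$ is a compact manifold with boundary. Its boundary is the disjoint union of the moduli spaces for $E(f)$, $E(g)$ and $E(fg)$, provided we choose the restrictions of $f_\Sigma$ and $\mu_\Sigma$ to the boundary circles to be the approximations used there. The trivialisations from Step 2 give a stable framing of $T\mathcal{M}_\Sigma$ by the same construction as $\tau_f$. Its restriction to each boundary circle is one of the allowed framings coming from some trivialisation there.

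\textbf{Step 4: read off the relation.} We obtain a framed cobordism showing
\[
FBF_{X,\mathfrak{s}}(f)+FBF_{X,\mathfrak{s}}(g) = FBF_{X,\mathfrak{s}}(f\circ g).
\]
Since $\eta\cdot BF_{X,\mathfrak{s}}=0$, Proposition \ref{prop:xixi'} says each boundary class is independent of which framing is induced. Composition of diffeomorphisms matches the product of loops, and the invariant is unchanged under reversing the circle orientation because it takes values in a group of exponent $2$. Therefore no bookkeeping of framings is needed.

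\textbf{Main obstacle.} The delicate step is making the finite-dimensional approximation over $\Sigma$ compatible with prescribed approximations on the boundary. I would handle this by choosing $W'$ over $\Sigma$ large enough and then stabilising the boundary approximations to agree with its restriction. This is possible because the stable classes are independent of $W'$, and homotopies of maps or sections near the boundary can be absorbed into collars. I would also check that the induced boundary orientation of $\mathcal{M}_\Sigma$ is compatible with the framing conventions.
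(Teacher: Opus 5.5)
Your proposal is correct and follows essentially the same route as the paper: a family over the pair of pants $\Sigma \simeq S^1 \vee S^1$ restricting to $E(f\circ g)$, $E(f)$ and $E(g)$ on the boundary, with $ind(D)$, $H^+(E/\Sigma)$ and $T\Sigma$ trivialisable, so that the framed moduli space over $\Sigma$ is a framed cobordism between the boundary moduli spaces. Your appeal to Proposition \ref{prop:xixi'} to absorb the induced boundary framings supplies a detail the paper leaves implicit, but your exponent-$2$ shortcut for circle orientations only covers the $\mathbb{Z}_2$ target as stated; for a general $\Omega^{fr}_{d(\mathfrak{s})+2}$ (for instance $\mathbb{Z}_{24}$, which Theorem \ref{thm:exotic2} later needs), you should instead regard $\Sigma$ as an oriented cobordism from $\partial_-\Sigma$ to $\partial_+\Sigma$, as the paper does, so that the signs come out right.
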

\begin{proof}
Let $f_1,f_2$ be orientation preserving diffeomorphisms of $X$ preserving $\mathfrak{s}$ and with $sgn_+(f_1) = sgn_+(f_2) = 1$. Let $\Sigma$ denote a pair of pants, that is, the $2$-sphere with three open balls removed. We regard $\Sigma$ as a cobordism with ingoing boundary $\partial_- \Sigma = S^1$ and outgoing boundary $\partial_+ \Sigma = S^1 \coprod S^1$. Since $\Sigma$ deformation retracts to $S^1 \vee S^1$, we can construct a smooth fibre bundle $E \to \Sigma$ with fibres diffeomorphic to $X$ whose restriction to $\partial_- \Sigma$ is the mapping cylinder $E(f_1 \circ f_2 )$ and whose restriction to $\partial_+ \Sigma$ is the disjoint union $E(f_1) \coprod E(f_2)$ of mapping cylinders of $f_1$ and $f_2$. Furthermore, the spin$^c$-structure $\mathfrak{s}$ can be promoted to a families spin$^c$-structure on $E$ that restricts to $\mathfrak{s}$ on each fibre. We denote this families spin$^c$-structure simply by $\mathfrak{s}$.

Associated to the family $(E , \mathfrak{s})$ is a families Bauer--Furuta map $f : S^{W}_\Sigma \to S^{W'}_{\Sigma'}$. Since $ind(D), H^+(E/\Sigma)$ and $T\Sigma$ are all trivialisable, we can follow the procedure given in Section \ref{sec:moduli} to construct a stable framing of the moduli space $\mathcal{M} = f^{-1}(\mu)$, where $\mu$ is a section of $S^{W'}_{\Sigma'}$ which is transverse to $f$ and disjoint from infinity. Since $\Sigma$ has a boundary $\mathcal{M}$ will also have a boundary, which is given by $\mathcal{M}|_{\partial \Sigma}$. Clearly $\mathcal{M}|_{\partial_- \Sigma}$ is a representative of the framed cobordism class $FBF_{X , \mathfrak{s}}(f_1 \circ f_2)$ and similarly $\mathcal{M}|_{\partial_+ \Sigma}$ represents the framed cobordism class $FBF_{X , \mathfrak{s}}(f_1) + FBF_{X,\mathfrak{s}}(f_2)$. Then since $\mathcal{M}$ is a stably framed cobordism from $\mathcal{M}_{\partial_- \Sigma}$ to $\mathcal{M}_{\partial_+ \Sigma}$, we get the equality $FBF_{X,\mathfrak{s}}(f_1 \circ f_2) = FBF_{X,\mathfrak{s}}(f_1) + FBF_{X,\mathfrak{s}}(f_2)$.
\end{proof}

The next result is a simple consequence of diffeomorphism invariance of the Bauer--Furuta invariant. We let $Diff(X)$ act on the set of spin$^c$-structures by inverse pullback: $f \cdot \mathfrak{s} = (f^{-1})^*(\mathfrak{s})$ (inverting makes this a left action).

\begin{proposition}\label{prop:diffinv}
Let $X$ be a compact, oriented, smooth $4$-manifold with $b_1(X) = 0$, $f : X \to X$ an orientation preserving diffeomorphism such that $sgn_+(f) = 1$ and $\mathfrak{s}$ an $f$-invariant spin$^c$-structure. Assume further that $BF_{X , \mathfrak{s}} = 0$ (or more generally $\eta \cdot BF_{X , \mathfrak{s}} = 0$). Let $h : X \to X$ be any orientation preserving diffeomorphism. Then
\[
FBF_{X , h \cdot \mathfrak{s} }( h \circ f \circ h^{-1} ) = sgn_+(h) FBF_{X , \mathfrak{s}}(f).
\]
\end{proposition}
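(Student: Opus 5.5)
The plan is to show that $h$ induces an isomorphism of families, and then track what this isomorphism does to the framing data. First I check that the left-hand side is defined. The structure $h \cdot \mathfrak{s} = (h^{-1})^*\mathfrak{s}$ is invariant under $hfh^{-1}$, since $(hfh^{-1})^*(h^{-1})^*\mathfrak{s} = (h^{-1})^* f^*\mathfrak{s} = (h^{-1})^*\mathfrak{s}$. Next, $sgn_+$ is a homomorphism into $\{\pm 1\}$, so $sgn_+(hfh^{-1}) = sgn_+(f) = 1$. Finally, diffeomorphism invariance of the unparametrised invariant gives $BF_{X,h\cdot\mathfrak{s}} = \pm BF_{X,\mathfrak{s}}$, so the hypothesis $\eta\cdot BF = 0$ transfers.

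Now define $H : E(f) \to E(hfh^{-1})$ by $H(x,t) = (h(x),t)$. This is well defined on mapping tori: $(x,0)\sim(f(x),1)$ is sent to $(h(x),0)$ and $(hf(x),1) = (hfh^{-1}(h(x)),1)$, which are identified. The map $H$ is a fibre-preserving diffeomorphism covering the identity of $S^1$, and it is fibrewise orientation preserving. By Lemma \ref{lem:famspinc} it pulls the families structure $h\cdot\mathfrak{s}$ back to $\mathfrak{s}$, since both restrict to $\mathfrak{s}$ on $X_0$.

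Pull back fibrewise metrics and reference connections along $H$. Then the families monopole maps, their finite-dimensional approximations $f$ and sections $\mu$ are identified. Hence the moduli spaces $\mathcal{M}$ are identified as manifolds, and so are the stable isomorphisms $\varphi_f$ and $\rho_f$. The only remaining question is whether the framings agree, and this is where the sign comes from.

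The framing depends on trivialisations of $V$, $V'$, $TS^1$ and $H^+(E/S^1)$. The complex data $V$, $V'$ carry canonical orientations and are preserved by the identification; $TS^1$ is untouched. The induced map $H^+(X) \to H^+(X)$ (via $h^*$) preserves or reverses the orientation class of $H^+$ according to $sgn_+(h)$.

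The main obstacle is to make precise the effect of reversing the orientation of the trivialisation of $H^+(E/S^1)$. On the unparametrised level this multiplies the class by $-1$, as noted in Section \ref{sec:moduli}: the overall orientation of $W\oplus W'$ is a choice of orientation of $H^+$. For families over $S^1$ one must check that a trivialisation, composed with a constant orientation-reversing reflection, changes the stable framing of $\mathcal{M}$ by exactly a reflection of one trivial summand. That in turn reverses the framed cobordism class, $[\mathcal{M},\xi]\mapsto -[\mathcal{M},\xi]$.

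I would argue this directly. A constant reflection $r \in O(n)$ acting on a trivial summand $\mathbb{R}^n$ of the stable trivialisation is the standard operation giving the additive inverse in $\Omega^{fr}_*$, since it corresponds to precomposing the Pontryagin--Thom map with a degree $-1$ map of a sphere coordinate. Any loop-valued ambiguity in the trivialisation is already absorbed, because $\eta\cdot BF = 0$ (Proposition \ref{prop:xixi'}). Combining these steps gives $FBF_{X,h\cdot\mathfrak{s}}(hfh^{-1}) = sgn_+(h)\,FBF_{X,\mathfrak{s}}(f)$.
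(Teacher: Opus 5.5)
Your proposal is correct and follows the same route as the paper. You identify $E(f)$ with $E(h\circ f\circ h^{-1})$ via $H(x,t)=(h(x),t)$, note that this carries $\mathfrak{s}$ to $h\cdot\mathfrak{s}$ and so identifies the Bauer--Furuta data, and observe that the only discrepancy is the orientation of $H^+(X)$, which is preserved or reversed according to $sgn_+(h)$. You also supply details the paper leaves implicit: that the left-hand side is defined, that loop-type ambiguities in the trivialisations are absorbed by $\eta\cdot BF=0$, and that a constant reflection negates the framed class.
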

\begin{proof}
The diffeomorphism $h$ defines an isomorphism of mapping tori $H : E(f) \to E(h \circ f \circ h^{-1})$ given by $H(x,t) = ( h(x) , t)$. Under this isomorphism the families spin$^c$-structure on $E(f)$ corresponding to $\mathfrak{s}$ is sent to the families spin$^c$-structure on $E(h \circ f \circ h^{-1})$ corresponding to $h \cdot \mathfrak{s}$. It follows that the families Bauer--Furuta maps for $(E(f) , \mathfrak{s})$ and $( E(h \circ f \circ h^{-1}) , h \cdot \mathfrak{s})$ can be identified. However this identification will either preserve or reverse orientation on $H^+(X)$ according to $sgn_+(h)$, hence the equality $FBF_{X , h \cdot \mathfrak{s} }( h \circ f \circ h^{-1} ) = sgn_+(h) FBF_{X , \mathfrak{s}}(f)$.
\end{proof}

We will prove two gluing formulas for the families Bauer--Furuta invariant $FBF_{X,\mathfrak{s}}$. For the first gluing formula we assume that $X = X' \# (S^2 \times S^2)$, for some compact, oriented, smooth $4$-manifold $X'$ with $b_1(X') = 0$. Assume that $\mathfrak{s} = \mathfrak{s}' \# \mathfrak{s}_0$, where and $\mathfrak{s}_0$ is the unique spin$^c$-structure on $S^2 \times S^2$ with $c(\mathfrak{s}_0) = 0$. Consider a diffeomorphism $f$ of $X$ which can be written as a connected sum $f = f' \# \rho$. More precisely, this means that $f'$ is diffeomorphism on $X'$, $\rho$ is a diffeomorphism on $S^2 \times S^2$ and $f',\rho$ both equal the identity in some neighbourhood of the points where the connected sum is formed. Then $f'$ and $\rho$ can be glued together to give a diffeomorphism $f = f' \# \rho$ on $X$. We will assume that $sgn_+(f') = sgn_+(\rho) = -1$. We will also assume that $\rho$ is isotopic to an isometry of the standard positive scalar curvature metric on $S^2 \times S^2$.

\begin{lemma}\label{lem:psc}
Let $\pi : E \to B$ be a smooth family of compact, oriented spin $4$-manifolds diffeomorphic to $X$, where $b_1(X) = 0$. Let $\mathfrak{s}$ be a families spin$^c$-structure which comes from a families spin structure (i.e. a spin structure on the vertical tangent bundle). Suppose further that $E$ can be given a family of fibrewise metrics $g = \{ g_b \}_{b \in B}$ such that each $g_b$ has positive scalar curvature. Then the families Bauer--Furuta invariant of $(E , \mathfrak{s})$ is given by the inclusion map $i : S^0_B \to S^{H^+(E/B)}_B$.
\end{lemma}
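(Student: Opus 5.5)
We use the fibrewise positive scalar curvature metric together with the Weitzenböck formula to show that the Dirac part of the monopole map is fibrewise injective. This lets us deform the whole families monopole map, through maps satisfying the required boundedness, to its linear part restricted to $H^+$.

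First, fix the family of psc metrics $g = \{g_b\}$. Since $\mathfrak{s}$ comes from a families spin structure, choose the reference spin$^c$-connections to be the spin connections, so the determinant line is trivial with trivial connection.

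Next, apply the Lichnerowicz formula $D_A^*D_A = \nabla^*\nabla + s/4$ with $s>0$. It gives $\ker D_A = 0$ on each fibre. By quaternionic index considerations, or simply because the spin Dirac operator has index $-\sigma/8$, we are in the situation where $ind(D)$ is represented by $-\mathrm{coker}(D)$. More directly, the same Weitzenböck argument shows the only solutions of the unperturbed Seiberg--Witten equations on each fibre are reducible: $\psi = 0$, and $a$ is harmonic with $d^+a = 0$, $d^*a = 0$. Since $b_1 = 0$, this forces $a = 0$.

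With this in hand, run the standard homotopy $SW_t = l + t c$ for $t\in[0,1]$. The key estimate is that solutions of $SW_t(\psi,a) \in$ (bounded set) have uniformly bounded norm. This is the usual a priori bound of Bauer--Furuta, uniform in $t$ and over compact $B$, and psc is not even needed for it. So the finite-dimensional approximations give a homotopy of Bauer--Furuta maps from $f$ to the approximation of the linear map $l$. Hence the stable class of $f$ equals that of $l$.

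Now identify the linear map. We have $l = D \oplus (d^+, d^*)$. On the form part, $d^+\oplus d^*$ is injective with cokernel $H^+(E/B)$ (using $b_1=0$). On the spinor part, $D$ is injective with cokernel $\mathrm{coker}(D)$, which is a vector bundle because $\ker D = 0$ uniformly. Taking $W'$ to contain $H^+(E/B)\oplus \mathrm{coker}(D)$ and stabilising, the approximation of $l$ is the fibrewise inclusion $S^0_B \to S^{H^+(E/B)\oplus \mathrm{coker}D}_B$.

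It remains to remove the $\mathrm{coker}(D)$ factor. Here psc is used genuinely. Positive scalar curvature and the spin condition imply, by the Weitzenböck formula on $D^*$ as well (the same formula holds for $D D^*$ on $S^-$), that $\mathrm{coker}\,D = 0$ too. Thus $ind(D)=0$ fibrewise, which is consistent since psc spin forces $\sigma = 0$ by Lichnerowicz. Then $W - W' = -H^+(E/B)$, and the stable map is the inclusion $i : S^0_B \to S^{H^+(E/B)}_B$, as claimed.

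The main obstacle is the homotopy step. We must check that the compactness bounds hold uniformly in the homotopy parameter $t$ and over the base, and that the homotopy is one of maps of sphere bundles compatible with the chosen approximations. We would cite the families version of the Bauer--Furuta boundedness from \cite{bk}, applied to the family over $B\times[0,1]$ with monopole map $l+tc$.
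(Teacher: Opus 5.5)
Your outline (deform the monopole map along $l+tc$ to its linear part, then use positive scalar curvature to see that $l$ is injective with cokernel $H^+(E/B)$) is the paper's. Your identification of the linear map, including $\mathrm{coker}\,D = 0$ from the Lichnerowicz formula on $S^-$, is fine.

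The gap is the step you flag as the main obstacle. You assert that preimages of bounded sets under $l+tc$ are bounded uniformly in $t\in[0,1]$ by the usual Bauer--Furuta estimate, and that psc is ``not even needed''. That is false. Since $c$ is quadratic, $(l+tc)(x/t) = t^{-1}(l+c)(x)$. So any irreducible Seiberg--Witten solution $x$ at $t=1$ gives a zero $x/t$ of $l+tc$ whose norm blows up as $t\to 0$. At $t=0$ the zero set is $\ker l$, which is unbounded whenever $\ker D\neq 0$. The Bauer--Furuta bound comes from the quadratic term and degenerates as its coefficient goes to zero. Citing the families boundedness of \cite{bk} over $B\times[0,1]$ does not help: $l+tc$ is not a monopole map at $t=0$, and no estimate is uniform near $t=0$ without extra input.

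That extra input is exactly where psc must be used. The paper runs the Weitzenb\"ock argument on the deformed equations $D_{A+tia}\psi = 0$, $d^+a = -ti\sigma(\psi)$, $d^*a=0$. This gives $\Delta|\psi|^2 + \frac{s}{2}|\psi|^2 + \frac{t^2}{2}|\psi|^4 \le 0$, hence $\psi=0$ and then $a=0$ for every $t$. The paper then adds boundedness at the two endpoints: $t=1$ by Bauer--Furuta, and $t=0$ because $\ker l=0$ by psc and $b_1=0$. This is what \cite[\textsection 3]{b2} needs to conclude that the approximations at $t=0$ and $t=1$ agree.

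You already have the ingredients. Your $t=1$ vanishing result transfers to all $t\in(0,1]$ by the rescaling above, and $\ker D=0$ handles $t=0$. As written, though, you use psc only to identify the linear map, and you rest the crucial homotopy step on an estimate that does not hold.
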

\begin{proof}
The idea of the proof is that in the presence of positive scalar curvature we can homotopy away the quadratic terms in the Seiberg--Witten equations. Since the families spin$^c$-structure comes from a spin structure we can choose the reference connection $A$ to be a spin connection, so $F_A = 0$. On each fibre of $E$ the Seiberg--Witten monopole map takes the form
\begin{align*}
SW : L^2_k( S^+ ) \oplus L^2_k( \wedge^1 T^*X ) \oplus &\to L^2_{k-1}(S^-) \oplus L^2_{k-1}( \wedge^2_+ T^*X) \oplus L^2_{k-1}( \wedge^0 T^*X )_0 \\
SW( \psi , a ) &= ( D_{A+ia} \psi , d^+ a + i\sigma(\psi) , d^*a).
\end{align*}

Following \cite{bf}, we say that a Fredholm map $f : H' \to H$ of separable Hilbert spaces is {\em bounded} if the preimage of bounded sets are bounded. This notion also makes sense for Fredholm maps of Hilbert bundles over a base space $B$. Consider the homotopy $H_t$ where
\[
H_t( \psi , a ) = ( D_{A+tia} \psi , d^+a + ti\sigma(\psi) , d^*a).
\]
We have $H_t = SW$ and $H_0$ is the linear Fredholm map $H_0(\psi , a) = (D_A \psi , d^+a , d^*a)$. At times $t = 0,1$, $H_t$ is a bounded map. For $t=1$, this is due to Bauer--Furuta \cite[Proposition 3.1]{bf}. For $t=0$, this follows from $Ker(D_A) = 0$, which holds because of positive scalar curvature and the Weitzenb\"ock formula.

We will show that for all times $t \in [0,1]$, the solution set $H_t^{-1}(0)$ is bounded, in fact, consists of a single point for each fibre of $E \to B$. As explained in \cite[\textsection 3]{b2}, this implies that the finite-dimensional approximations at $t=0,1$ give the same stable cohomotopy class.

Suppose $(\psi,a)$ is a solution to $H_t( \psi , a) = 0$ for some $t \in [0,1]$ lying over $b \in B$. Thus $D_{A+tia} \psi = 0$, $d^+ a = -t i\sigma(\phi)$ and $d^*a = 0$.

From the Weitzenb\"ock formula we get ($s$ denotes the scalar curvature):
\begin{align*}
\Delta |\psi|^2 &\le 2 \langle \nabla_{A+tia}^* \nabla_{A+tia} \psi , \psi \rangle \\
& = \langle 2 D^*_{A+tia}D_{A+tia}\psi -\frac{s}{2}\psi - F^+_{A+tia} \psi , \psi \rangle \\
& = -\frac{s}{2}|\psi|^2 - t \langle i (d^+a) \psi , \psi \rangle \\
& = -\frac{s}{2}|\psi|^2 - t^2 \langle \sigma(\psi) \psi , \psi \rangle \\
&= -\frac{s}{2}|\psi|^2 - \frac{t^2}{2} |\psi|^4. 
\end{align*}
Hence
\[
\Delta |\psi|^2 +\frac{s}{2}|\psi|^2 + \frac{t^2}{2}|\psi|^4 \le 0.
\]

At the global maximum $p \in \pi^{-1}(b)$ of $|\psi|^2$, we have $|\psi| = ||\psi||_{L^\infty}$ and $\Delta |\psi|^2 \ge 0$, hence
\[
\frac{s(p)}{2}||\psi||^2_{L^\infty} + \frac{t^2}{2}||\psi||^4_{L^\infty} \le 0.
\]

Since $s(p) > 0$, this means $\psi = 0$. So we now have $d^+a = 0$, $d^*a = 0$. Hence $a=0$ (as $b_1(X)=0$) and we have a unique reducible solution given by the connection $A$. Of course this means that $H_t^{-1}(0)$ is bounded.

Since $H_0 = D_A \oplus (d^+) \oplus d^*$ is linear Fredholm with trivial kernel and with cokernel $H^+(E/B)$, it is easily seen that the finite dimensional approximation of $H_0$ is the inclusion map $ i : S^0_B \to S^{H^+(E/B)}_B$.
\end{proof}

\begin{proposition}\label{prop:gluing1}
Let $X = X' \# (S^2 \times S^2)$ for some compact, oriented, smooth $4$-manifold $X'$ with $b_1(X') = 0$. Assume that $\mathfrak{s} = \mathfrak{s}' \# \mathfrak{s}_0$, where $\mathfrak{s}_0$ is the unique spin$^c$-structure on $S^2 \times S^2$ with $c(\mathfrak{s}_0) = 0$. Let $f$ be a diffeomorphism of $X$ of the form $f = f' \# \rho$ where $sgn_+(f') = sgn_+(\rho) = -1$ and where $\rho$ is isotopic to an isometry of the standard positive scalar curvature metric on $S^2 \times S^2$. Then
\[
FBF_{X , \mathfrak{s}}(f) = BF_{X' , \mathfrak{s}'} \in \Omega^{fr}_{d(\mathfrak{s})+2} = \Omega^{fr}_{d(\mathfrak{s}')+1}.
\]
\end{proposition}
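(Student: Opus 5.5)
The plan is to use the families connected sum (gluing) formula for Bauer--Furuta maps, namely that the families Bauer--Furuta map of a fibrewise connected sum is the fibrewise smash product of the Bauer--Furuta maps of the summands. The mapping torus $E(f)$ of $f = f' \# \rho$ is the fibrewise connected sum of $E(f')$ and $E(\rho)$ along a section on which both diffeomorphisms are the identity. Since the gluing theorem of Bauer holds in families (the neck-stretching argument is local in the base), the families Bauer--Furuta map of $(E(f),\mathfrak{s})$ is stably homotopic over $S^1$ to the fibrewise smash product
\[
f_{E(f')} \wedge_{S^1} f_{E(\rho)} : S^{W_1 \oplus W_2}_{S^1} \to S^{W_1' \oplus W_2'}_{S^1}.
\]
First I replace $\rho$ by the isometry it is isotopic to; this does not change the isotopy class of $f$ (after isotoping within the region away from the connected-sum point, which is possible since isometries of the round metric fixing a point can be arranged, or by composing with a small isotopy). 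Then $E(\rho)$ carries a fibrewise positive scalar curvature metric and $\mathfrak{s}_0$ comes from the spin structure, so Lemma \ref{lem:psc} identifies its Bauer--Furuta map with the inclusion $i : S^0_{S^1} \to S^{H^+(E(\rho)/S^1)}_{S^1}$. Here $H^+(E(\rho)/S^1) = L \oplus \mathbb{R}$, where $L$ is the Möbius line bundle, since $sgn_+(\rho) = -1$ and $b_+(S^2\times S^2)=1$.

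Next I analyse $E(f')$. Since $sgn_+(f') = -1$, $H^+(E(f')/S^1) \cong L \oplus \mathbb{R}^{b_+(X')-1}$. Smashing with $i$ shows the families map for $E(f)$ is $f_{E(f')}$ followed by the inclusion of the zero section into the extra fibrewise sphere $S^{L\oplus\mathbb{R}}_{S^1}$, and overall $H^+(E(f)/S^1) \cong L\oplus L \oplus \mathbb{R}^{b_+(X)-2}$ is trivial, as required. To compute the moduli space, choose the section $\mu = (\mu', \nu)$ with $\nu$ a nowhere-vanishing section of $L\oplus \mathbb{R}$ (e.g.\ $(0,1)$). Then the preimage of $\mu$ under the smash product of $f_{E(f')}$ with the zero-section inclusion is empty, so this naive choice is useless; instead I take $\nu$ to be a section of $L\oplus\mathbb{R}$ transverse to zero, i.e.\ with exactly one zero: the section of $L$ vanishing once, with $\mathbb{R}$-component passing through zero at that same point. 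Concretely pick $\nu = (s, 0)$ with $s$ a section of $L$ vanishing transversally at a single point $t_0$; since the inclusion $S^0 \to S^{L\oplus\mathbb{R}}$ hits only $0$, the preimage is supported over $t_0$, and one needs the $\mathbb{R}$-direction too, so I instead regard the $\mathbb{R}$ summand as absorbed into a stabilisation and use the codimension-one zero of $s$. The moduli space then becomes $\mathcal{M}_0(X',\mathfrak{s}') \times \{t_0\}$ up to the contribution of the $\mathbb{R}$-summand, i.e.\ the fibre of $f_{E(f')}$ over $t_0$, which is the unparametrised moduli space of $(X',\mathfrak{s}')$. Its induced stable framing should be the one coming from $BF_{X',\mathfrak{s}'}$, giving $FBF_{X,\mathfrak{s}}(f) = BF_{X',\mathfrak{s}'}$ in degree $d(\mathfrak{s}')+1 = d(\mathfrak{s})+2$ (as $d(\mathfrak{s}) = d(\mathfrak{s}')-1$).

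The main obstacle is this last step: correctly bookkeeping the dimension shift and the framing. I would handle it by the identity $S^{L}_{S^1}$ cofibre argument: the inclusion $S^0_{S^1} \to S^L_{S^1}$ composed with the Pontryagin--Thom collapse of its transverse zero is degree one on the fibre over $t_0$, so fibrewise smashing with it realises the ``restriction to a point'' map from families classes over $S^1$ twisted by $L$ to unparametrised classes. One must check that the framing induced via the trivialisation of $L\oplus L$ restricts at $t_0$ to the framing of $\mathcal{M}_0$ defined by an orientation of $H^+(X')$; any ambiguity is by $\eta\cdot BF_{X,\mathfrak{s}}$, which vanishes here since $BF_{X,\mathfrak{s}}=0$ for a connected sum with $S^2\times S^2$ (again by the gluing formula and Lemma \ref{lem:psc} with $B$ a point), so the result is independent of that choice.
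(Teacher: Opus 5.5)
Your overall strategy matches the paper's. You use the families gluing formula to write the Bauer--Furuta map of $E(f)$ as $F_1\wedge_{S^1}F_2$, identify $F_2$ with an inclusion via Lemma \ref{lem:psc}, and perturb the extra factor so that the moduli space collapses to the unparametrised moduli space of $(X',\mathfrak{s}')$ over a single point $t_0$. However, the last step as written fails because of a miscount.

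Since $b_+(S^2\times S^2)=1$, the bundle $H^+(E(\rho)/S^1)$ has rank one. Because $sgn_+(\rho)=-1$, it is exactly the M\"obius line $L$, not $L\oplus\mathbb{R}$. Two checks confirm this: $d(\mathfrak{s}_0)=-2$ gives index $-1$, so $F_2$ is fibrewise $S^0\to S^1$; and your own formula $H^+(E(f)/S^1)\cong L\oplus L\oplus\mathbb{R}^{b_+(X)-2}$ is only consistent with rank one.

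With your $L\oplus\mathbb{R}$, the final paragraph breaks down in three ways:
\begin{itemize}
\item A generic section of a rank-two bundle over $S^1$ has no zeros, so the transverse moduli space would be empty and the invariant would come out $0$. Since the class does not depend on the choice of transverse section, you cannot escape this by choosing a different one.
\item Your replacement $\nu=(s,0)$ is not transverse: its derivative at $t_0$ cannot surject onto a rank-two fibre.
\item ``Absorbing the $\mathbb{R}$ summand into a stabilisation'' is not a legitimate move. Stabilisation adds the same bundle to source and target, so it cannot delete a trivial summand from the target alone.
\end{itemize}

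Once you correct this to $F_2=i:S^0_{S^1}\to S^L_{S^1}$, the argument is immediate and coincides with the paper's:
\begin{itemize}
\item Take $\mu=(\mu',\nu)$, where $\nu$ is a section of $L$ crossing zero transversally exactly once at $t_0$ (a generic section of $L$ has an odd number of zeros), and $\mu'(t_0)$ is a regular value of $F_1|_{t_0}$.
\item Then $(F_1\wedge_{S^1}F_2)^{-1}(\mu)=F_1|_{t_0}^{-1}(\mu'(t_0))$, which is the unparametrised moduli space of $(X',\mathfrak{s}')$.
\item Its dimension is $d(\mathfrak{s}')+1=d(\mathfrak{s})+2$, as required.
\end{itemize}

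Your remark that the framing choices only matter up to $\eta\cdot BF_{X,\mathfrak{s}}=0$ is a reasonable supplement; the paper does not discuss the framing at all.
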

\begin{proof}
First note that since $X = X' \# (S^2 \times S^2)$, the ordinary Bauer--Furuta invariant of $X$ vanishes, so $FBF_{X,\mathfrak{s}}(f)$ is well-defined. By the gluing formula for the families Bauer--Furuta map \cite{tom1}, the families Bauer--Furuta map for $(X,\mathfrak{s},f)$ will be of the form $F_1 \wedge_{S^1} F_2$, where $F_1$ is the families Bauer--Furuta map for $(X',\mathfrak{s'}, f')$ and $F_2$ is the famlies Bauer--Furuta map for $( (S^2 \times S^2) , \mathfrak{s}_0 , \rho)$. Since $\rho$ is isotopic to an isometry $\rho'$ of the standard positive scalar curvature metric on $S^2 \times S^2$, the mapping torus of $\rho$ is diffeomorphic to the mapping torus of $\rho'$ and hence the mapping torus of $\rho'$ can be equipped with a family of positive scalar curvature metrics on the fibres. Lemma \ref{lem:psc} then implies that $F_2$ is given by the inclusion $i : S^0_B \to S^{M}_B$, where $B = S^1$ and $M \to S^1$ is the unique non-orientable line bundle on $S^1$ (the M\"obius band). Choosing a perturbation $\mu : B \to S^M_B$ that crosses the zero section exactly once transversally, we see that the moduli space for $F_1 \wedge_{S^1} F_2$ is equal to the unparametrised moduli space for $(X' , \mathfrak{s}')$. The equality $FBF_{X , \mathfrak{s}}(f) = BF_{X' , \mathfrak{s}'}$ follows.
\end{proof}

We now prove a second gluing formula.

\begin{proposition}\label{prop:gluing2}
Let $X$ be a compact, oriented smooth $4$-manifold with $b_1(X) = 0$. Let $f : X \to X$ be an orientation preserving diffeomorphism with $sgn_+(f) = 1$ and let $\mathfrak{s}$ be a spin$^c$-structure on $X$ which is preserved by $f$. Assume that $BF_{X , \mathfrak{s}} = 0$ (or just $\eta \cdot BF_{X , \mathfrak{s}} = 0$), so that $FBF_{X,\mathfrak{s}}(f)$ is defined.

Let $W$ be a compact, oriented smooth $4$-manifold with $b_1(W) = 0$ and let $\mathfrak{s}_W$ be a spin$^c$-structure on $W$. Assume that $f$ fixes a neighbourhood of a point on $X$ so that the connected sum diffeomorphism $f \# id_W$ can be constructed. Then
\[
FBF_{X \# W , \mathfrak{s} \# \mathfrak{s}_W}(f \# id_W) = FBF_{X , \mathfrak{s}}(f) \cdot BF_{W , \mathfrak{s}_W}.
\]
\end{proposition}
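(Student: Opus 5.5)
We will argue exactly as in the proof of Proposition \ref{prop:gluing1}, combining the families gluing formula with an explicit choice of perturbation. First we check that the left-hand side is defined. We have $sgn_+(f \# id_W) = sgn_+(f) = 1$. By the (unparametrised) connected sum formula of Bauer--Furuta, $BF_{X \# W , \mathfrak{s} \# \mathfrak{s}_W} = BF_{X , \mathfrak{s}} \cdot BF_{W , \mathfrak{s}_W}$. Hence $\eta \cdot BF_{X \# W, \mathfrak{s} \# \mathfrak{s}_W} = (\eta \cdot BF_{X,\mathfrak{s}}) \cdot BF_{W , \mathfrak{s}_W} = 0$, and so $FBF_{X \# W , \mathfrak{s} \# \mathfrak{s}_W}(f \# id_W)$ is defined. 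Moreover, $f \# id_W$ preserves $\mathfrak{s} \# \mathfrak{s}_W$.

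Next, the mapping torus $E(f \# id_W)$ is the fibrewise connected sum of $E(f)$ with the trivial family $W \times S^1$. The fibrewise connected sum is taken along the section of $E(f)$ given by the fixed point. By the gluing formula for families Bauer--Furuta maps \cite{tom1}, the Bauer--Furuta map for $E(f \# id_W)$ is stably homotopic to the fibrewise smash product $F_1 \wedge_{S^1} F_2$. Here $F_1 : S^{W_1}_B \to S^{W_1'}_B$ is the map for $(E(f),\mathfrak{s})$, and $F_2 = id_B \times f_W : B \times (V_2^+) \to B \times (V_2'^+)$ is the pullback to $B = S^1$ of an unparametrised Bauer--Furuta map $f_W$ of $(W , \mathfrak{s}_W)$.

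We then compute the moduli space. Take $\mu = (\mu_1 , \mu_2)$, where $\mu_1$ is a section transverse to $F_1$ and disjoint from infinity, and $\mu_2$ is a constant regular value of $f_W$. Then $(F_1 \wedge_B F_2)^{-1}(\mu) = F_1^{-1}(\mu_1) \times f_W^{-1}(\mu_2) = \mathcal{M}_1 \times \mathcal{M}_W$, and this product is transverse.

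The main point to verify is that the stable framing constructed in Section \ref{sec:moduli} agrees with the product framing. The relevant bundles split as direct sums: $ind(D) = ind(D_1) + ind(D_W)$, $H^+ = H^+(E(f)/S^1) \oplus H^+(W)$, and $W, W'$ split accordingly. The isomorphisms $\varphi$ and $\rho$ are therefore direct sums, and the trivialisations may be chosen as direct sums of trivialisations for the two factors, with the factor for $W$ constant in $t$. Since the invariant is independent of these choices when $\eta \cdot BF$ vanishes (Proposition \ref{prop:xixi'}), this gives $[\mathcal{M}_1 \times \mathcal{M}_W] = FBF_{X,\mathfrak{s}}(f) \cdot BF_{W , \mathfrak{s}_W}$ in $\Omega^{fr}_*$.

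The two potential obstacles are the following. The first is that the gluing theorem gives the splitting only up to stable homotopy of maps of sphere bundles, so we must note that such homotopies induce framed cobordisms. This was already done for $\mu$-homotopies in Section \ref{sec:moduli}, and the same argument applies over $[0,1] \times B$. The second is the sign and ordering convention for orientations of $H^+$ in the product. This is harmless because the formula is insensitive to the choice, and only the overall orientation on $H^+(X \# W) = H^+(X) \oplus H^+(W)$ matters.
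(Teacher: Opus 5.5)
Your proposal is correct and follows the same route as the paper: the families gluing formula writes the Bauer--Furuta map of $E(f \# id_W)$ as $F_1 \wedge_{S^1} F_2$, with $F_2$ the pullback to $S^1$ of the unparametrised map of $(W,\mathfrak{s}_W)$, and the moduli space is then $\mathcal{M}_1 \times \mathcal{M}_W$ with the product framing. Your extra checks fill in what the paper leaves as ``follows easily'': the left-hand side is defined since $\eta \cdot BF_{X \# W, \mathfrak{s} \# \mathfrak{s}_W} = (\eta \cdot BF_{X,\mathfrak{s}}) \cdot BF_{W,\mathfrak{s}_W} = 0$, and the remaining framing ambiguities are absorbed by Proposition~\ref{prop:xixi'}.
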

\begin{proof}
Once again, the gluing formula for the families Bauer--Furuta invariant \cite{tom1} implies that the families Bauer--Furuta invariant for $(X \# W , \mathfrak{s} \# \mathfrak{s}_W , f \# id_W)$ is of the form $F^1 \wedge_{S^1} F_2$, where $F_1$ is the families Bauer--Furuta map for $(X , \mathfrak{s} , f)$ and $F_2$ is the families Bauer--Furuta map for $(W , \mathfrak{s}_W , id_W)$. The latter is just the pullback under $S^1 \to \{pt\}$ of the ordinary Bauer--Furuta invariant of $(W , \mathfrak{s}_W)$. From here the result follows easily.
\end{proof}

Next we prove a blowup formula for the families Bauer--Furuta invariant. Assume that $X = X' \# \overline{\mathbb{CP}^2}$, where $X'$ is a compact, oriented, smooth $4$-manifold with $b_1(X') = 0$. Assume that $\mathfrak{s}$ is of the form $\mathfrak{s} = \mathfrak{s}' \# \kappa$, where $c(\kappa)^2 = -1$. We assume that $BF_{X',\mathfrak{s}'} = 0$, hence also $BF_{X,\mathfrak{s}} = 0$. Assume also that $f$ is of the form $f = f' \# id$ for some diffeomorphism $f'$ on $X'$ with $sgn_+(f') = 1$.

\begin{proposition}
Under the above assumptions on $(X , \mathfrak{s} , f)$, we have:
\[
FBF_{X , \mathfrak{s}}(f) = FBF_{X' , \mathfrak{s}'}(f') \in \Omega^{fr}_{d(\mathfrak{s}) + 2}.
\]
\end{proposition}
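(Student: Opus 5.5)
The plan is to deduce the blowup formula from the second gluing formula, Proposition \ref{prop:gluing2}, applied with $W = \overline{\mathbb{CP}^2}$ and $\mathfrak{s}_W = \kappa$. The setup matches exactly. We have $X = X' \# \overline{\mathbb{CP}^2}$ and $f = f' \# id$, where $f'$ fixes a neighbourhood of the connected sum point and $sgn_+(f') = 1$. Since $b_+(\overline{\mathbb{CP}^2}) = 0$, $H^+(X) = H^+(X')$ and $sgn_+(f) = sgn_+(f') = 1$. By hypothesis $BF_{X',\mathfrak{s}'} = 0$, so $FBF_{X',\mathfrak{s}'}(f')$ is defined. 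Proposition \ref{prop:gluing2} then gives
\[
FBF_{X,\mathfrak{s}}(f) = FBF_{X',\mathfrak{s}'}(f') \cdot BF_{\overline{\mathbb{CP}^2},\kappa}.
\]
It therefore suffices to show that $BF_{\overline{\mathbb{CP}^2},\kappa} = 1 \in \Omega^{fr}_0 = \pi^s_0 \cong \mathbb{Z}$, up to the sign convention fixed by orientations.

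The degree first. For $\overline{\mathbb{CP}^2}$ we have $\sigma = -1$ and $b_+ = 0$, so $d(\kappa) = (-1+1)/4 - 0 - 1 = -1$. Hence $BF_{\overline{\mathbb{CP}^2},\kappa}$ lies in $\pi^s_{d(\kappa)+1} = \pi^s_0$, which is consistent with $d(\mathfrak{s}) = d(\mathfrak{s}')$.

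To compute this invariant, I would use the positive scalar curvature argument of Lemma \ref{lem:psc}, adapted to the spin$^c$ case. Take the Fubini--Study metric on $\overline{\mathbb{CP}^2}$, which has positive scalar curvature, and note that $c(\kappa) = \pm E$ is the generator of $H^2$. The lemma as stated assumes a spin structure so that $F_A = 0$. Here $\overline{\mathbb{CP}^2}$ is not spin, so I would instead choose $A$ to be the anti-self-dual harmonic connection. This is possible because $b_+ = 0$, so every harmonic form is anti-self-dual and $F_A^+ = 0$.

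With $F_A^+ = 0$, the Weitzenb\"ock computation in the proof of Lemma \ref{lem:psc} goes through unchanged along the homotopy $H_t$. It shows that $\psi = 0$, and then $a = 0$ because $b_1 = 0$. So the solution sets stay bounded and the invariant equals the finite-dimensional approximation of the linear map $H_0$. By the Weitzenb\"ock formula with $s>0$ and $F_A^+=0$, the Dirac operator $D_A$ has trivial kernel. Its index is $(c^2-\sigma)/8 = 0$, so it also has trivial cokernel. The map $(d^+,d^*)$ is an isomorphism since $b_1 = b_+ = 0$. Hence $H_0$ is an isomorphism, and its approximation is a homeomorphism $S^0 \to S^0$, namely $\pm 1 \in \pi^s_0$. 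The sign is fixed as $+1$ by the complex orientation of the Dirac index together with the fact that $H^+ = 0$ carries no orientation choice.

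The main subtle step is the sign, together with checking that the product in Proposition \ref{prop:gluing2} is the ring product, so that multiplying by $1$ acts as the identity. I would handle both by observing that the moduli space for $F_1 \wedge_{S^1} F_2$ is $\mathcal{M}_1 \times \{pt\}$, where the point carries the positive framing induced by the identity linear isomorphism. This is just as in the proof of Proposition \ref{prop:gluing1}, where the moduli space of the positive scalar curvature factor was a single transverse point.
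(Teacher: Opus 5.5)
Your proposal is correct and follows essentially the same route as the paper: the gluing formula (Proposition \ref{prop:gluing2} with $W=\overline{\mathbb{CP}^2}$, $\mathfrak{s}_W=\kappa$) reduces the claim to showing that the ordinary invariant $BF_{\overline{\mathbb{CP}^2},\kappa}\in\pi^s_0\cong\mathbb{Z}$ has degree $1$. The only difference is that you prove the degree-one step yourself, by running the positive scalar curvature argument of Lemma \ref{lem:psc} with the Fubini--Study metric and an anti-self-dual reference connection (so that $F_A^+=0$ replaces the spin hypothesis and the linearisation becomes an isomorphism), whereas the paper simply asserts it and cites the method of \cite[\textsection 3]{bar}; your argument for that step is sound.
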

\begin{proof}
The gluing formula for the families Bauer--Furuta invariant gives that the families Bauer--Furuta invariant for $(X , \mathfrak{s} , f)$ is of the form $F = F_1 \wedge_{S^1} F_2$, where $F_1$ is the families Bauer--Furuta invariant of $(X' , \mathfrak{s}' , f')$ and $F_2$ is the families Bauer--Furuta invariant for $(\overline{\mathbb{CP}}^2 , \kappa , id_{\overline{\mathbb{CP}^2}} )$. Since we are taking the identity diffeomorphism $id_{\overline{\mathbb{CP}^2}}$, the families Bauer--Furuta invariant $F_2$ is just the pullback under $S^1 \to pt$ of the ordinary (non-equivariant) Bauer--Furuta invariant of $(\overline{\mathbb{CP}^2} , \kappa)$. But since $d(\kappa) = -1$, this takes the form of a homotopy class $F_2 : S^n \to S^n$, that is a class in $\pi^s_0 \cong \mathbb{Z}$, detected by the degree of $F_2$. But the degree of $F_2$ is easily seen to be $1$ (for example, using the method of \cite[\textsection 3]{bar}). Therefore $F$ and $F_1$ are stably homotopic and hence $FBF_{X , \mathfrak{s}}(f) = FBF_{X' , \mathfrak{s}'}(f')$.
\end{proof}

\subsection{$Pin$-cobordism-valued invariants}\label{sec:pin}

The families Bauer--Furuta invariant constructed in Section \ref{sec:fbf} has an unfortunate defect in that it is only defined for diffeomorphisms satisfying $sgn_+(f) = 1$. For diffeomorphisms with $sgn_+(f) = -1$, we do not get a naturally defined framing and so we can not expect to get invariants valued in $\Omega^{fr}_*$. Instead we consider invariants valued in other cobordism rings. Since the families moduli spaces with $sgn_+(f) = -1$ do not come with natural orientations, obvious choices such as oriented cobordism or spin cobordism are not suitable. Unoriented cobordism is also not particulaly useful for our purposes, since $\Omega^O_1 = 0$. It turns out that pin-cobordism $\Omega^{pin}_*$ leads to an invariant with satisfactory properties. Here and throughout this paper, pin-cobordism refers to $Pin^-$-cobordism in the sense of Kirby--Taylor \cite{kt}.

As in Section \ref{sec:moduli}, we start with the general setting of a smooth family $E \to B$ and a families spin$^c$-structure $\mathfrak{s}$. We obtain a Bauer--Furuta map $f : S^W_B \to S^{W'}_B$ and a moduli space $\mathcal{M} = f^{-1}(\mu)$. As in Section \ref{sec:moduli} we have a stable isomorphism
\[
T\mathcal{M} = TB + W - W' = TB + ind(D) - H^+(E/B) \text{ in } K^0(B).
\]

Suppose that $B = S^1$ is the circle. Every vector bundle on $S^1$ admits a pin structure, hence $TB, W$ and $W'$ all have pin structures. We will see that $T\mathcal{M}$ inherits a pin structure and hence we obtain a cobordism class $[\mathcal{M}] \in \Omega^{pin}_*$. To do this properly requires some care because in the case of pin structures, the order of summands in a direct sum matter. Recall that if $U \to B$ admits a pin structure, then the set of pin structures on $U$ is a torsor for $H^1(B ; \mathbb{Z}_2)$. If $U_1,U_2 \to B$ are vector bundles on $B$ with pin-structures $\mathfrak{p}_1, \mathfrak{p}_2$, then we get induced pin structures $\mathfrak{p}_1 \oplus \mathfrak{p}_2$ on $U_1 \oplus U_2$ and $\mathfrak{p}_2 \oplus \mathfrak{p}_1$ on $U_2 \oplus U_1$. Let $s : U_1 \oplus U_2 \to U_2 \oplus U_1$ be the swapping isomorphism $s(u_1,u_2) = (u_2 , u_1)$. One can check that 
\[
s^*( \mathfrak{p}_2 \oplus \mathfrak{p}_1) = det(U_1) \otimes det(U_2) \otimes (\mathfrak{p}_1 \oplus \mathfrak{p}_2).
\]

Recall from Secction \ref{sec:moduli}, the isomorphism
\[
\rho_{f} : TB \oplus W \oplus \mathbb{R} \to T \mathcal{M} \oplus W' \oplus \mathbb{R}.
\]

After suspending, we can assume $W'$ is trivial, say $W' \cong \mathbb{R}^m$ for some $m$. So we have an isomorphism
\[
T \mathcal{M} \oplus \mathbb{R}^m \cong TB \oplus W \oplus \mathbb{R}.
\]
Now $TB, W$ and $\mathbb{R}$ admit pin structures, hence so does $T \mathcal{M} \oplus \mathbb{R}^m$. The obstruction for a pin structure is $w_2 + w_1^2$, which is a stable characteristic class. Thus if $T\mathcal{M} \oplus \mathbb{R}^m$ admits a pin structure, then so does $T\mathcal{M}$. Choose pin structures $\mathfrak{p}_{TB},\mathfrak{p}_W, \mathfrak{p}_{\mathbb{R}}$ and $\mathfrak{p}_{\mathbb{R}^m}$ (all regarded as vector bundles over $S^1$). Then there is a uniquely determined pin structure $\mathfrak{p}_{\mathcal{M}}$ on $T\mathcal{M}$ such that $\mathfrak{p}_{\mathcal{M}} \oplus \mathfrak{p}_{\mathbb{R}^m} \cong \mathfrak{p}_{TB} \oplus \mathfrak{p}_W \oplus \mathfrak{p}_{\mathbb{R}}$. Since $H^1(S^1 ; \mathbb{Z}_2) \cong \mathbb{Z}_2$, any vector bundle on $S^1$ admits exactly two pin structures. It follows that changing any of the pin structures on $TB,W, \mathbb{R}$ or $\mathbb{R}^m$ changes $\mathfrak{p}_{\mathcal{M}}$ to $\mathfrak{p}'_{\mathcal{M}} = M \otimes \mathfrak{p}_{\mathcal{M}}$, where $M$ is the unique non-trivial real line bundle on $S^1$.

It it easily seen that stabilisation of $f$ does not change the pair of pin structures $\mathfrak{p}_{\mathcal{M}}, \mathfrak{p}'_{\mathcal{M}}$. Denote by $[\mathcal{M}], [\mathcal{M}'] \in \Omega^{pin}_{d(\mathfrak{s})+2}$ the resulting pair of pin cobordism classes. By essentially the same proof as that of Proposition \ref{prop:xixi'}, we obtain:

\begin{proposition}
We have
\[
[\mathcal{M}] - [\mathcal{M}'] = \eta ^{pin} \cdot BF_{X,\mathfrak{s}} \in \Omega^{pin}_{d(\mathfrak{s})+2}
\]
where $\eta^{pin}$ denotes the generator of $\Omega^{pin}_1 \cong \mathbb{Z}_2$ and we regard $BF_{X,\mathfrak{s}}$ as an element of $\Omega^{pin}_{d(\mathfrak{s})+1}$ by using the natural map $\Omega^{fr}_* \to \Omega^{pin}_*$.
\end{proposition}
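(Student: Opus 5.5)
We will imitate the proof of Proposition \ref{prop:xixi'}, replacing framings by pin structures throughout. Pull the family $E \to S^1$ back to $S^1 \times [0,1]$ and remove an open disc to obtain the pair of pants $\Sigma$, with ingoing boundaries $S^1_1, S^1_2$ and outgoing boundary $S^1_3$. Over $S^1_1$ and $S^1_3$ the family is $E$; over $S^1_2$ it is the trivial family $X \times S^1$. Pull back a Bauer--Furuta map $f$ and a section $\mu$ disjoint from infinity and transverse to $f$. Then $\mathcal{M}_\Sigma = f_\Sigma^{-1}(\mu_\Sigma)$ is a cobordism from $\mathcal{M} \sqcup (\mathcal{M}_0 \times S^1_2)$ to $\mathcal{M}$, where $\mathcal{M}_0$ is the unparametrised moduli space.

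Next we put a pin structure on $\mathcal{M}_\Sigma$. As in the circle case, after suspending so that $W'$ is trivial we have a stable isomorphism $T\mathcal{M}_\Sigma \oplus \mathbb{R}^m \cong T\Sigma \oplus W \oplus \mathbb{R}$. Every bundle here is pulled back from $S^1 \times [0,1] \simeq S^1$, so each admits a pin structure. Fixing pin structures $\mathfrak{p}_{T\Sigma}, \mathfrak{p}_W, \mathfrak{p}_{\mathbb{R}}, \mathfrak{p}_{\mathbb{R}^m}$ then determines a unique pin structure on $T\mathcal{M}_\Sigma$, by the same rule as before, with the summands taken in the same order. The set of such choices is a torsor for $H^1(\Sigma;\mathbb{Z}_2) \cong \mathrm{Hom}(H_1(\Sigma;\mathbb{Z}),\mathbb{Z}_2)$. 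We choose $\mathfrak{p}_W$ pulled back from $S^1$, so that its restrictions to $S^1_1$ and $S^1_3$ agree and its restriction to $S^1_2$ is the trivial (bounding) structure, since $W|_{S^1_2}$ is pulled back from a point. We also need to check that the boundary restrictions agree with the circle construction. The outward or inward normal to $\partial\Sigma$ splits $T\Sigma|_{\partial} = \mathbb{R} \oplus TS^1$, and this splitting must be matched, in the correct order, with the summand $TB$ and the extra $\mathbb{R}$ in the circle construction. This ordering is the step where pin conventions (the swap formula involving $\det$) could introduce sign or line-bundle twists. Since all determinant bundles involved are restricted from $\Sigma$, any such twist is itself a class in $H^1(\Sigma;\mathbb{Z}_2)$ and can be absorbed into the choice below.

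With the structure fixed so that it restricts to $\mathfrak{p}_{\mathcal{M}}$ on $S^1_1$, we twist by $\phi \in \mathrm{Hom}(H_1(\Sigma),\mathbb{Z}_2)$ with $\phi[S^1_1]=0$ and $\phi[S^1_2]=\phi[S^1_3]=1$. This is consistent with the relation $[S^1_3]=[S^1_1]+[S^1_2]$. The twist changes the structure on $S^1_3$ to $M \otimes \mathfrak{p}_{\mathcal{M}} = \mathfrak{p}'_{\mathcal{M}}$, and on $S^1_2$ it changes the bounding structure on the circle factor to the Lie (non-bounding) one. Hence the pin boundary over $S^1_2$ is $\mathcal{M}_0 \times S^1$ with the product of the framing-induced structure from $BF_{X,\mathfrak{s}}$ and the non-bounding circle, which represents $\eta^{pin}\cdot BF_{X,\mathfrak{s}}$. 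Here we use that $\Omega^{fr}_* \to \Omega^{pin}_*$ is multiplicative and sends $\eta$ to the generator $\eta^{pin}$ of $\Omega^{pin}_1 \cong \mathbb{Z}_2$, which is the circle with non-bounding structure. The pin cobordism $\mathcal{M}_\Sigma$ then gives $[\mathcal{M}] + \eta^{pin}\cdot BF_{X,\mathfrak{s}} = [\mathcal{M}']$. The sign is irrelevant because $\eta^{pin}$ is $2$-torsion, so this is the claimed identity.

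The main obstacle is the boundary-compatibility bookkeeping. We need to verify that the pin structure induced on each boundary piece of $\mathcal{M}_\Sigma$, using the pin convention for boundaries together with the inward or outward normal, agrees with $\mathfrak{p}_{\mathcal{M}}$ or $\mathfrak{p}'_{\mathcal{M}}$ as defined over the circle, up to a twist that is global over $\Sigma$. The plan is to argue that every discrepancy is restricted from a class in $H^1(\Sigma;\mathbb{Z}_2)$, so it can be absorbed into the choice of $\phi$. The one thing this cannot change is the relative change between $S^1_1$ and $S^1_3$ versus the change on $S^1_2$, and that is exactly what produces $\eta^{pin}$.
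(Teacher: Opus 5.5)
Your proposal is correct and follows the paper's argument: the paper proves this statement only by saying it is essentially the proof of Proposition~\ref{prop:xixi'}, that is, the pair-of-pants argument with pin structures in place of framings, including the same twist by $\phi$ with $\phi[S^1_1]=0$ and $\phi[S^1_2]=\phi[S^1_3]=1$.

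For the boundary bookkeeping you flag, the justification should be the standard cobordism convention rather than absorption into $H^1(\Sigma;\mathbb{Z}_2)$. Use the inward normal at incoming ends and the outward normal at the outgoing end. Then the normal is $\partial_t$ at both $S^1_1$ and $S^1_3$, so the pulled-back structure restricts to exactly the same circle-construction structure at both ends and to the bounding one over $S^1_2$. Absorption alone could not handle a discrepancy at just one end. Reversing the normal at one end flips the orientation of the $TB$ summand and twists by $\det W$, which is $M$ when $sgn_+(f)=-1$. Classes in $H^1(\Sigma;\mathbb{Z}_2)$ cannot remove such a twist, because their restrictions to the three boundary circles sum to zero.
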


In particular, if $BF_{X,\mathfrak{s}} = 0$ (or more generally if $\eta ^{pin} \cdot BF_{X,\mathfrak{s}} = 0 \in \Omega^{pin}_{d(\mathfrak{s})+2}$), then the pin cobordism class $[\mathcal{M}] \in \Omega^{pin}_{d(\mathfrak{s})+2}$ does not depend on the choices made in its construction, so gives a well-defined invariant of $(X,\mathfrak{s},f)$.

\begin{definition}
Let $X$ be a compact, oriented, smooth $4$-manifold with $b_1(X) = 0$, $f : X \to X$ an orientation preserving diffeomorphism and $\mathfrak{s}$ an $f$-invariant spin$^c$-structure. Assume further that $BF_{X , \mathfrak{s}} = 0$ (or more generally, that $\eta^{pin} \cdot BF_{X , \mathfrak{s}} = 0 \in \Omega^{pin}_{d(\mathfrak{s})+2}$). Then the {\em pin-cobordism valued families Bauer--Furuta invariant of $(X,\mathfrak{s},f)$}, denoted 
\[
FBF^{pin}_{X , \mathfrak{s}}(f) \in \Omega^{pin}_{d(\mathfrak{s}) + 2}
\]
is the pin-cobordism class $[\mathcal{M}]$ of the families moduli space $\mathcal{M}$ associated to the family $(E(f) , \mathfrak{s})$.
\end{definition}

Of course, if $sgn_+(f) = 1$, then $FBF^{pin}_{X,\mathfrak{s}}(f)$ is the image of $FBF_{X,\mathfrak{s}}(f)$ under the natural map $\Omega^{fr}_* \to \Omega^{pin}_*$.

Similar to $FBF_{X , \mathfrak{s}}(f)$, we have that $FBF^{pin}_{X,\mathfrak{s}}(f)$ depends only on the smooth isotopy class of $f$. Therefore, we can regard $FBF^{pin}_{X, \mathfrak{s}}$ as a map
\[
FBF^{pin}_{X , \mathfrak{s}} : M(X , \mathfrak{s}) \to \Omega^{pin}_{d(\mathfrak{s})+2}.
\]
The proof of Proposition \ref{prop:hom} easily adapts to the pin-cobordism case and gives that $FBF^{pin}_{X, \mathfrak{s}}$ is a group homomorphism.

\begin{remark}
Not every class in $\Omega^{pin}_{*}$ can be realised as the pin-cobordism valued families Bauer--Furuta invariant for some triple $(X,\mathfrak{s} , f)$. Indeed, if $sgn_+(f) = 1$, then $FBF^{pin}_{X,\mathfrak{s}}(f)$ lies in the image of $\Omega^{fr}_* \to \Omega^{pin}_*$ and is therefore a $2$-torsion class \cite[Corollary 1.13]{kt}. More generally, for any diffeomorphism $f$ that preserves $\mathfrak{s}$, we have that $sgn_+(f^2) = 1$ and so $FBF^{pin}_{X,\mathfrak{s}}(f^2) = 2FBF_{X,\mathfrak{s}}^{pin}(f)$ lies in the image of $\Omega^{fr}_* \to \Omega^{pin}_*$. In particular, $FBF_{X , \mathfrak{s}}(f)$ is always $4$-torsion. On the other hand $\Omega^{pin}_2 \cong \mathbb{Z}_8$, so not every class in $\Omega^{pin}_2$ can be realised.
\end{remark}

The proof of Proposition \ref{prop:compact2} easily adapts to the case of pin-cobordism, giving:

\begin{proposition}\label{prop:compact2}
Let $X$ be a compact, oriented, smooth $4$-manifold wth $b_1(X) = 0$, $f : X \to X$ an orientation preserving diffeomorphism. Then there are only finitely many spin$^c$-structures on $X$ such that $FBF^{pin}_{X,\mathfrak{s}}(f)$ is defined and non-zero.
\end{proposition}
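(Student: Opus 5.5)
The plan is to repeat the argument of Proposition \ref{prop:compact1}, replacing framed cobordism by pin-cobordism throughout; the hypothesis $sgn_+(f)=1$ was used there only to make the invariant defined, not in the compactness argument.

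\emph{Step 1: only finitely many spin$^c$-structures carry unperturbed solutions.} Fix a family of fibrewise metrics on the mapping torus $E(f)$. Equivalently, fix a path $\{g_t\}_{t\in[0,1]}$ of metrics on $X$ with $f^*(g_1)=g_0$. The standard a priori bounds for Seiberg--Witten solutions, which are uniform over the compact parameter interval $[0,1]$, bound $|c(\mathfrak{s})|$ for any $\mathfrak{s}$ admitting an unperturbed solution for some $g_t$. Hence only a finite set $S$ of spin$^c$-structures admit a solution for some $t$.

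\emph{Step 2: the moduli space is empty off $S$.} Let $\mathfrak{s}\notin S$ be $f$-invariant with $FBF^{pin}_{X,\mathfrak{s}}(f)$ defined. Then the families monopole map $SW$ for $(E(f),\mathfrak{s})$ has $SW^{-1}(0)=\emptyset$. As in the proof of \cite[Theorem 2.24]{bk}, we can choose the finite-dimensional approximation $h:S^W_B\to S^{W'}_B$ together with a homotopic map $h'$ and a section $\mu'$ so that $(h')^{-1}(\mu')\cong SW^{-1}(0)=\emptyset$.

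\emph{Step 3: the pin-cobordism class vanishes.} The homotopy from $h$ to $h'$, together with a path of sections from $\mu$ to $\mu'$ that are disjoint from infinity and transverse to the homotopy, gives a manifold over $[0,1]\times S^1$. This manifold is a cobordism from $\mathcal{M}=h^{-1}(\mu)$ to the empty set. The main point to check is that the pin structure on $\mathcal{M}$ extends over this cobordism. The pin structure was produced from the stable isomorphism $T\mathcal{M}\oplus\mathbb{R}^m\cong TB\oplus W\oplus\mathbb{R}$ together with chosen pin structures on $TB$, $W$, $\mathbb{R}$ and $\mathbb{R}^m$. These bundles pull back to $[0,1]\times S^1$, and the chosen pin structures pull back with them, since $[0,1]\times S^1$ retracts onto $S^1$. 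The same recipe, applied to the exact sequence analogous to (\ref{equ:tm}) for the cobordism, therefore yields a pin structure on the cobordism restricting to $\mathfrak{p}_{\mathcal{M}}$ at the $h$ end. Hence $[\mathcal{M}]=0$ in $\Omega^{pin}_{d(\mathfrak{s})+2}$.

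This holds for either of the two pin structures $\mathfrak{p}_{\mathcal{M}},\mathfrak{p}'_{\mathcal{M}}$, so no well-definedness subtlety arises. Therefore $FBF^{pin}_{X,\mathfrak{s}}(f)$ can be nonzero only for $\mathfrak{s}\in S$, which is a finite set.
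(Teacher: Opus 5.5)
Your proposal is correct and is essentially the paper's proof, which simply says that the argument of Proposition \ref{prop:compact1} carries over to pin-cobordism: only finitely many spin$^c$-structures admit unperturbed solutions along the path $g_t$, and off that set the finite-dimensional approximation is homotopic to one with empty moduli space. Your Step 3 makes explicit what the paper leaves implicit, namely that the pin structure extends over the null-cobordism, and noting that both $\mathfrak{p}_{\mathcal{M}}$ and $\mathfrak{p}'_{\mathcal{M}}$ bound avoids any ordering or boundary-convention issue for the induced pin structure.
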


The gluing and blowup formulas are easily be adapted to the pin-cobordism case, giving the following results.

\begin{proposition}\label{prop:gluing1pin}
Let $X = X' \# (S^2 \times S^2)$ for some compact, oriented, smooth $4$-manifold $X'$ with $b_1(X') = 0$. Assume that $\mathfrak{s} = \mathfrak{s}' \# \mathfrak{s}_0$, where $\mathfrak{s}_0$ is the unique spin$^c$-structure on $S^2 \times S^2$ with $c(\mathfrak{s}_0) = 0$. Let $f$ be a diffeomorphism of $X$ of the form $f = f' \# \rho$ where $sgn_+(\rho) = -1$ and where $\rho$ is isotopic to an isometry of the standard positive scalar curvature metric on $S^2 \times S^2$. Then
\[
FBF^{pin}_{X , \mathfrak{s}}(f) = BF^{pin}_{X' , \mathfrak{s}'} \in \Omega^{pin}_{d(\mathfrak{s})+2} = \Omega^{pin}_{d(\mathfrak{s}')+1}
\]
where $BF^{pin}_{X' , \mathfrak{s}'}$ denotes the image of $BF_{X',\mathfrak{s}'}$ in $\Omega^{pin}_*$.
\end{proposition}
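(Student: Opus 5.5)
We adapt the proof of Proposition \ref{prop:gluing1} to the pin setting. Note that $sgn_+(f')$ is no longer required to be $-1$. The first step is well-definedness. Since $X = X' \# (S^2 \times S^2)$, the ordinary Bauer--Furuta invariant $BF_{X,\mathfrak{s}}$ vanishes. Hence $FBF^{pin}_{X,\mathfrak{s}}(f)$ is defined.

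By the gluing formula for families Bauer--Furuta maps \cite{tom1}, the Bauer--Furuta map for $(E(f),\mathfrak{s})$ is stably homotopic to $F_1 \wedge_{S^1} F_2$. Here $F_1 : S^{W_1}_{S^1} \to S^{W_1'}_{S^1}$ is the families map for $(X',\mathfrak{s}',f')$ and $F_2$ is the families map for $(S^2\times S^2,\mathfrak{s}_0,\rho)$. We replace $\rho$ by an isotopic isometry of the positive scalar curvature metric, which does not change the isomorphism class of the mapping torus. Lemma \ref{lem:psc} then identifies $F_2$ with the inclusion $i : S^0_{S^1} \to S^{M}_{S^1}$, where $M = H^+(E(\rho)/S^1)$ is the Möbius line bundle because $sgn_+(\rho)=-1$.

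We choose the perturbation as a fibre product $\mu = \mu_1 \times \mu_2$. Here $\mu_2$ is a section of $S^M_{S^1}$ that meets the zero section transversally exactly once, at a point $t_0$. The moduli space $\mathcal{M} = (F_1\wedge_{S^1} F_2)^{-1}(\mu)$ is then $F_1^{-1}(\mu_1)\cap \pi^{-1}(t_0)$. This is precisely the unparametrised moduli space $\mathcal{M}_0$ for $(X',\mathfrak{s}')$ over the single fibre $t_0$. The fibre of $E(f')$ at $t_0$ is $X'$, so $\mathcal{M}_0$ represents $BF_{X',\mathfrak{s}'}$ as a framed manifold. It remains to check that the pin structure on $\mathcal{M}$ constructed in Section \ref{sec:pin} is the pin structure induced from that framing.

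This comparison is the main obstacle. I would make it locally near $t_0$. The normal bundle of $\mathcal{M}_0 \subset F_1^{-1}(\mu_1)$ is the pullback of $T_{t_0}S^1$. By transversality, this normal bundle is identified with $M_{t_0}$ via the derivative of $\mu_2$. Over a small interval around $t_0$, every bundle in the isomorphism
\[
T\mathcal{M}\oplus \mathbb{R}^m \cong TB \oplus W \oplus \mathbb{R}
\]
is trivial. On a contractible set, pin structures are unique up to isomorphism, and trivial bundles with their framings carry canonical pin structures. So the pin structure on $\mathcal{M}_0$ obtained by restriction agrees with the one induced from a stable framing of $\mathcal{M}_0$. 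That stable framing agrees, up to orientation, with the framing $\xi_0$ defining $BF_{X',\mathfrak{s}'}$.

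Two ambiguities remain. The first is the choice of orientation on $H^+(X')$, which changes $\xi_0$ by a sign. The second is the twist by $M$ in the pin structure. Both change the class only by sign or by the indeterminacy $\eta^{pin}\cdot BF_{X,\mathfrak{s}}$, which is zero because $BF_{X,\mathfrak{s}}=0$. Since $\mathcal{M}$ sits over a single point, twisting by $M$ does not change its pin structure at all. Only the overall sign remains, and we fix it by the same orientation convention used to define $BF_{X',\mathfrak{s}'}$. We conclude $FBF^{pin}_{X,\mathfrak{s}}(f) = BF^{pin}_{X',\mathfrak{s}'}$. The degree shift is consistent with this, since $d(\mathfrak{s}) = d(\mathfrak{s}')-1$ gives $d(\mathfrak{s})+2 = d(\mathfrak{s}')+1$.
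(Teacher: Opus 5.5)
Your proposal is correct and follows the same route the paper intends; the paper only remarks that the proof of Proposition \ref{prop:gluing1} adapts. The steps match: the gluing formula, Lemma \ref{lem:psc} identifying $F_2$ with the inclusion $S^0_{S^1}\to S^M_{S^1}$, and a perturbation crossing the zero section once, so that $\mathcal{M}$ is the unparametrised moduli space of $(X',\mathfrak{s}')$. Your explicit check that the induced pin structure on $\mathcal{M}$ is the one coming from its stable framing is a detail the paper leaves implicit.

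One small correction. The residual sign is not something you can fix by an orientation convention, since the pin construction involves none, but it is harmless. Composing a stable framing with a constant reflection gives an isomorphic $Pin^-$ structure, because the reflection lifts to $Pin^-$. So $\pm BF_{X',\mathfrak{s}'}$ have the same image in $\Omega^{pin}_*$, consistent with that image being $2$-torsion.
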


\begin{proposition}\label{prop:gluing2pin}
Let $X$ be a compact, oriented smooth $4$-manifold with $b_1(X) = 0$. Let $f : X \to X$ be an orientation preserving diffeomorphism and let $\mathfrak{s}$ be a spin$^c$-structure on $X$ which is preserved by $f$. Assume that $BF^{pin}_{X , \mathfrak{s}} = 0$ (or just $\eta^{pin} \cdot BF^{pin}_{X , \mathfrak{s}} = 0$), so that $FBF^{pin}_{X,\mathfrak{s}}(f)$ is defined.

Let $W$ be a compact, oriented smooth $4$-manifold with $b_1(W) = 0$ and let $\mathfrak{s}_W$ be a spin$^c$-structure on $W$. Assume that $f$ fixes a neighbourhood of a point on $X$ so that the connected sum diffeomorphism $f \# id_W$ can be constructed. Then
\[
FBF^{pin}_{X \# W , \mathfrak{s} \# \mathfrak{s}_W}(f \# id_W) = FBF^{pin}_{X , \mathfrak{s}}(f) \cdot BF^{pin}_{W , \mathfrak{s}_W}.
\]
\end{proposition}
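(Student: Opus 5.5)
The plan is to mirror the proof of Proposition \ref{prop:gluing2}, keeping track of pin structures instead of framings. First I apply the families gluing formula of \cite{tom1} to the mapping torus $E(f \# id_W) \to S^1$. The mapping torus is the fibrewise connected sum of $E(f)$ with the trivial family $W \times S^1$, glued along a trivialised family of balls; this uses that $f$ fixes a neighbourhood of a point. The gluing formula then says that a Bauer--Furuta map for $(E(f\# id_W), \mathfrak{s}\#\mathfrak{s}_W)$ is stably homotopic to the fibrewise smash product $F_1 \wedge_{S^1} F_2$. Here
\[
F_1 : S^{W_1}_{S^1} \to S^{W_1'}_{S^1}
\]
is a Bauer--Furuta map for $(E(f),\mathfrak{s})$, and $F_2 = \pi^* g$ is the pullback along $\pi: S^1 \to pt$ of an unparametrised Bauer--Furuta map
\[
g : S^{U} \to S^{U'}
\]
for $(W,\mathfrak{s}_W)$.

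Next I compute the moduli space. I choose a section $\mu_1$ of $S^{W_1'}_{S^1}$ transverse to $F_1$ and disjoint from infinity, and a regular value $\nu$ of $g$ away from infinity. Then the section $\mu = \mu_1 \wedge \nu$ is transverse to $F_1\wedge_{S^1}F_2$, and the moduli space is the product $\mathcal{M}_1 \times \mathcal{M}_W$. Here $\mathcal{M}_1 = F_1^{-1}(\mu_1)$ represents $FBF^{pin}_{X,\mathfrak{s}}(f)$, and $\mathcal{M}_W = g^{-1}(\nu)$ is a framed manifold representing $BF_{W,\mathfrak{s}_W}$. Since $\Omega^{pin}_*$ is a module over $\Omega^{fr}_*$ via products, and the natural map $\Omega^{fr}_* \to \Omega^{pin}_*$ is compatible with this action, the class of $\mathcal{M}_1\times \mathcal{M}_W$ in $\Omega^{pin}_*$ is $FBF^{pin}_{X,\mathfrak{s}}(f)\cdot BF^{pin}_{W,\mathfrak{s}_W}$, provided the pin structures agree.

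The main obstacle is verifying that the pin structure on $\mathcal{M}_1\times\mathcal{M}_W$ given by the construction of Section \ref{sec:pin} agrees with the product of the pin structure on $\mathcal{M}_1$ and the framing on $\mathcal{M}_W$. The construction uses the isomorphism
\[
T\mathcal{M}\oplus \mathbb{R}^m \cong TB \oplus W \oplus \mathbb{R},
\]
where $W = W_1 \oplus U$ and $U$ is a trivial bundle pulled back from a point. The difficulty is that for pin structures the order of summands matters, via the formula for $s^*(\mathfrak{p}_2\oplus\mathfrak{p}_1)$. Commuting the trivial summands $U, U'$ past $TB$, $W_1$ and $W_1'$ introduces the correction factor $det(U)\otimes det(\cdot)$, which is trivial here because $U$ and $U'$ are trivial bundles. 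I will check that each swap therefore leaves the pin structure unchanged up to isomorphism.

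If an ambiguity remains, it can change the pin structure on $\mathcal{M}_1\times\mathcal{M}_W$ only by tensoring with the M\"obius bundle $M$ pulled back from $S^1$. By the same argument as in Proposition \ref{prop:xixi'}, such a change alters the resulting class by a multiple of $\eta^{pin}\cdot BF^{pin}_{X,\mathfrak{s}}\cdot BF^{pin}_{W,\mathfrak{s}_W}$. This term vanishes by the hypothesis $\eta^{pin}\cdot BF^{pin}_{X,\mathfrak{s}}=0$. So in either case we obtain the stated equality, and the left-hand side is well defined because $\eta^{pin}\cdot BF^{pin}_{X\#W}$ is a multiple of $\eta^{pin}\cdot BF^{pin}_{X,\mathfrak{s}}$ by the unparametrised gluing formula.
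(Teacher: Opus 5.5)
Your proposal is correct and follows the paper's route: the paper only invokes the families gluing formula to write the Bauer--Furuta map of $(E(f\# id_W),\mathfrak{s}\#\mathfrak{s}_W)$ as $F_1\wedge_{S^1}F_2$, with $F_2$ pulled back from a point, and says the result follows as in Proposition \ref{prop:gluing2}; your product moduli space $\mathcal{M}_1\times\mathcal{M}_W$ and pin-structure bookkeeping fill in what it leaves implicit. One correction: with the paper's swap formula as stated, and after stabilising so that $W_1'$ is trivial, the correction factor $\det(U)\otimes\det(W_1)=\det(W_1)$ is the M\"obius bundle whenever $sgn_+(f)=-1$, so the swaps are not trivial, and it is your fallback argument that actually settles the comparison: a twist by $M$ pulled back from $S^1$ changes the class by $\eta^{pin}\cdot BF^{pin}_{X,\mathfrak{s}}\cdot BF^{pin}_{W,\mathfrak{s}_W}=0$.
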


\begin{proposition}
Let $X = X' \# \overline{\mathbb{CP}^2}$, where $X'$ is a compact, oriented, smooth $4$-manifold with $b_1(X') = 0$. Assume that $\mathfrak{s}$ is of the form $\mathfrak{s} = \mathfrak{s}' \# \kappa$, where $c(\kappa)^2 = -1$. Assume that $BF^{pin}_{X',\mathfrak{s}'} = 0$, hence also $BF^{pin}_{X,\mathfrak{s}} = 0$. Assume that $f$ is of the form $f = f' \# id$ for some diffeomorphism $f'$ on $X'$. Then:
\[
FBF^{pin}_{X , \mathfrak{s}}(f) = FBF^{pin}_{X' , \mathfrak{s}'}(f') \in \Omega^{pin}_{d(\mathfrak{s}) + 2}.
\]
\end{proposition}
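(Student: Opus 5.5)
The plan is to copy the argument for the framed blowup formula, but keep track of pin structures in place of framings. By the gluing formula for families Bauer--Furuta maps \cite{tom1}, the Bauer--Furuta map for $(E(f),\mathfrak{s})$ is stably homotopic, as a map of sphere bundles over $B = S^1$, to a fibrewise smash product $F = F_1 \wedge_{S^1} F_2$. Here $F_1 : S^{W_1}_B \to S^{W_1'}_B$ is the families map for $(X',\mathfrak{s}',f')$, and $F_2$ is the families map for $(\overline{\mathbb{CP}^2},\kappa,id)$. Since the second family is trivial, $F_2$ is the pullback under $S^1 \to pt$ of the unparametrised map $S^n \to S^n$. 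This uses $d(\kappa) = -1$ and $b_+(\overline{\mathbb{CP}^2}) = 0$, so $H^+$ contributes nothing.

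The degree of this map is $1$, as in the framed blowup formula via \cite[\textsection 3]{bar}. Hence it is homotopic to the identity, and $F_2$ is homotopic through sphere bundle maps to $id_{S^{\underline{\mathbb{R}}^n}_B}$. Therefore $F$ is homotopic to the stabilisation $F_1 \wedge_B id_{S^{\mathbb{R}^n}_B}$ of $F_1$ by a trivial bundle. Choose a section $\mu = \mu_1 \times 0$, with $\mu_1$ transverse to $F_1$. The moduli space $\mathcal{M}(F,\mu)$ is then identified with $\mathcal{M}(F_1,\mu_1)$, and the homotopy gives a cobordism between the moduli spaces for the original map and for the stabilised one.

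The remaining step, which I expect to be the only delicate one, is to check that the pin structures agree. The pin structure on $\mathcal{M}$ is determined by the isomorphism $T\mathcal{M} \oplus \mathbb{R}^m \cong TB \oplus W \oplus \mathbb{R}$ together with choices of pin structures on the summands. Under stabilisation by a trivial $\mathbb{R}^n$ the isomorphism $\rho_F$ becomes $\rho_{F_1} \oplus id$, as noted in Section \ref{sec:moduli}. Since $\mathbb{R}^n$ is trivial, its determinant line is trivial, so by the formula $s^*(\mathfrak{p}_2 \oplus \mathfrak{p}_1) = det(U_1) \otimes det(U_2) \otimes (\mathfrak{p}_1 \oplus \mathfrak{p}_2)$, reordering summands does not change the induced pin structure. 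Consequently the pair $\{\mathfrak{p}_{\mathcal{M}},\mathfrak{p}'_{\mathcal{M}}\}$ is sent to the corresponding pair for $F_1$. The ambiguity within the pair is irrelevant, because $BF^{pin}_{X',\mathfrak{s}'} = 0$; by the pin analogue of Proposition \ref{prop:xixi'}, both classes then coincide on each side.

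Likewise, the $K$-theory identification $W - W' = ind(D) - H^+$ splits as a sum of the $X'$ and $\overline{\mathbb{CP}^2}$ contributions, and the latter is trivial. So the remaining choices of pin structures on $V, V'$ and $H^+$ match those made for $X'$. This gives $FBF^{pin}_{X,\mathfrak{s}}(f) = FBF^{pin}_{X',\mathfrak{s}'}(f')$. Note that $d(\mathfrak{s}) = d(\mathfrak{s}')$, since $c(\kappa)^2 = -1$ and the signature drops by one, so the two classes lie in the same group.
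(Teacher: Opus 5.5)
Your proposal is correct and follows the paper's route: the paper's proof of this proposition is the single remark that the framed blowup argument adapts to pin-cobordism (gluing formula, $F_2$ is the pullback of a degree-one map $S^n\to S^n$, so $F$ is stably homotopic to a stabilisation of $F_1$), and your pin-structure bookkeeping fills in that ``easily adapted'' step. One small slip: with $U_2=\mathbb{R}^n$ the quoted swap formula twists by $\det(U_1)$, which need not be trivial (e.g.\ $\det W_1\cong\det H^+(E(f')/S^1)$ is the M\"obius bundle when $W_1'$ is trivial and $sgn_+(f')=-1$), so reordering can change the pin structure; this is harmless, because, as you note, only the unordered pair $\{\mathfrak{p}_{\mathcal{M}}, M\otimes\mathfrak{p}_{\mathcal{M}}\}$ matters, and a twist by $M$ preserves it.
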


\section{Exotic diffeomorphisms}\label{sec:exoticdiff}

In this section, we will use the gluing and blowup formulas to construct exotic diffeomorphisms of reducible, simply-connected $4$-manifolds with odd $b_+$. 

\begin{theorem}\label{thm:exotic1}
Let $X$ be one of:
\begin{itemize}
\item[(1)]{$n (S^2 \times S^2) \# n K3$, $n \ge 1$, or}
\item[(2)]{$4 n \mathbb{CP}^2 \# k \overline{\mathbb{CP}^2}$, $n\ge 1$, $k \ge 20n$.}
\end{itemize}

Then the image of the homomorphism
\[
\Phi : T(X) \to \bigoplus_{\mathfrak{s}} \Omega^{fr}_1
\]
(where the sum is over all spin$^c$-structures on $X$ with $d(\mathfrak{s}) = -1$) given by $\Phi(f) = \bigoplus_{\mathfrak{s}} FBF_{X,\mathfrak{s}}(f)$ is not finitely generated.

\end{theorem}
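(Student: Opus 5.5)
Following Ruberman and Baraglia--Konno, I will realise each Torelli element as a commutator-type composite $f = f_1 \circ f_2^{-1}$. Here $f_1, f_2$ are diffeomorphisms that act identically on $H^2(X)$ and differ only in which exotic smooth structure the relevant summand ``comes from''. The gluing formula Proposition \ref{prop:gluing1} will compute the invariants.

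For case (1), write $X = X' \# (S^2 \times S^2)$ with $X' = (n-1)(S^2\times S^2) \# nK3$. Equivalently, using $K3 \# (S^2\times S^2)$ and an exotic copy (e.g.\ a logarithmic transform $K3_p \# (S^2\times S^2)$ with $p$ odd, or $E(n)$-type manifolds dissolving after one stabilisation), we get diffeomorphisms $X \cong X'_p \# (S^2\times S^2)$ for an infinite family $X'_p$ homeomorphic to $X'$ with distinct Bauer--Furuta invariants. Take $\rho$ to be the reflection of $S^2\times S^2$ acting by $-1$ on $H^+$ (an isometry of the round metric, so $sgn_+(\rho) = -1$). Take $f'_p$ to be a reflection in a $(-2)$- or $(+2)$-sphere of $X'_p$ with $sgn_+ = -1$, chosen so that the induced maps on $H^2(X)$ agree under the identifications. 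Transport $f'_p \# \rho$ to $X$ to get $g_p$. By Wall/Quinn, $f_p := g_p \circ g_0^{-1}$ acts trivially on $H^2$, so it lies in $T(X)$. Proposition \ref{prop:hom}, together with Proposition \ref{prop:diffinv} to handle conjugations and sign conventions, gives
\[
FBF_{X,\mathfrak{s}}(f_p) = BF_{X'_p,\mathfrak{s}'} - BF_{X'_0,\mathfrak{s}'}
\]
in $\Omega^{fr}_{d(\mathfrak{s}')+1}$. Here $d(\mathfrak{s}) = -1$ means $d(\mathfrak{s}') = 0$, so the values land in $\Omega^{fr}_1 = \mathbb{Z}_2$ after noting $\eta$-compatibility. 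The mod $2$ reduction of $BF$ in degree $d=0$ is the mod $2$ Seiberg--Witten invariant, since the Hurewicz map $\pi^s_1$ detects it, or more precisely $BF \in \pi^s_1$ for $d=0$ is $SW \bmod 2$ times $\eta$. For $X'_p$ with $b_+$ even we must check it is $\equiv 3 \bmod 4$ and that the relevant BF invariants are nontrivial. This is exactly the condition $b_+(X') = 4n-1$, with $n$ copies of K3 (via the Bauer--Furuta connected-sum formula and the $K3$ invariant $\eta$-type classes).

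Case (2) will be handled similarly. We use $X' = (4n-1)\mathbb{CP}^2 \# (k-1)\overline{\mathbb{CP}^2}$, with exotic copies $X'_p$ given by blowups of elliptic-type or Dolgachev-type manifolds (e.g.\ $E(2n)_{p}$-type with $b_+ = 4n-1$) dissolving after one $S^2\times S^2$. We use $S^2\times S^2 \# \overline{\mathbb{CP}^2} \cong \mathbb{CP}^2 \# 2\overline{\mathbb{CP}^2}$, and the blowup formula extends to larger $k$.

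To get non-finite generation, I will use Proposition \ref{prop:compact1}: each $f_p$ has only finitely many nonzero components. Since the families $X'_p$ have SW basic classes of growing norm as $p$ varies (multiplicity of fibres), the supports of $\Phi(f_p)$ move off to infinity. Hence the image contains infinitely many elements with pairwise new supports, so it is an infinite-rank $\mathbb{Z}_2$-vector space.

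The main obstacle will be constructing the diffeomorphisms $f'_p$ with $sgn_+ = -1$ on each exotic $X'_p$ compatible on cohomology, so that the composite is Torelli. I would first try reflections in spheres of square $\pm 2$ arising from the $S^2\times S^2$ summands after dissolution. Secondarily, I need to verify $BF_{X'_p,\mathfrak{s}'}\ne 0$ in $\Omega^{fr}_1$ for the chosen classes.
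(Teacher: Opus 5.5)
Your overall architecture is the right one: a composite $g_p\circ g_0^{-1}$ of two maps of the form (summand map)$\,\#\,\rho$, transported through Wall-compatible identifications $X\cong X'_p\#(S^2\times S^2)$ with $X'_p$ a logarithmic transform of $E(2n)$, evaluated by a gluing formula, with supports escaping to infinity as the divisibility of the canonical class grows. But the step you flag as the main obstacle is not just hard, it is impossible, and it breaks the framed version of the argument.

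For $g_p$ to preserve $\mathfrak{s}$ and have $sgn_+(g_p)=1$, so that Propositions \ref{prop:hom} and \ref{prop:gluing1} apply, you need $f'_p$ on $X'_p$ with $(f'_p)^*\mathfrak{s}'=\mathfrak{s}'$ and $sgn_+(f'_p)=-1$. Naturality of the Seiberg--Witten invariant under change of homology orientation then gives $SW(X'_p,\mathfrak{s}')=-SW(X'_p,\mathfrak{s}')$, so $SW(X'_p,\mathfrak{s}')=0$. Since $b_+(X'_p)=4n-1\equiv 3 \bmod 4$ and $d(\mathfrak{s}')=0$, this forces $BF_{X'_p,\mathfrak{s}'}=SW(X'_p,\mathfrak{s}')\,\eta=0$. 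So whenever such an $f'_p$ exists, your formula returns $0$. Concretely, none of your candidates work:
reflections in $(-2)$-spheres fix a maximal positive subspace, so $sgn_+=+1$;
$E(2n)_p$ contains no essential spheres of positive square, by the adjunction inequality;
the extra $S^2\times S^2$ summands exist only in the stabilisation $X$, not in $X'_p$.

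The missing ingredient is the pin-cobordism invariant of Section \ref{sec:pin}, which is the paper's main new tool. Take $f'_p=id_{X'_p}$. Then $g_p=\psi_p\circ(id_{X'_p}\#\rho)\circ\psi_p^{-1}$ and $g_0=id_{X'}\#\rho$ both have $sgn_+=-1$ and carry no framed invariant. However, $FBF^{pin}_{X,\mathfrak{s}}$ is defined on all of $M(X,\mathfrak{s})$: only $BF_{X,\mathfrak{s}}=0$ is needed, and this holds because of the $S^2\times S^2$ summand. It is a homomorphism, and the pin gluing formula, Proposition \ref{prop:gluing1pin}, needs only $sgn_+(\rho)=-1$. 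Hence for the Torelli element $t_p=g_p\circ g_0^{-1}$ and $\mathfrak{s}_p=(\psi_p^{-1})^*(\mathfrak{s}_{can}\#\mathfrak{s}_0)$,
\[
FBF^{pin}_{X,\mathfrak{s}_p}(t_p)=FBF^{pin}_{X,\mathfrak{s}_p}(g_p)-FBF^{pin}_{X,\mathfrak{s}_p}(g_0)=SW(E(2n)_p,\mathfrak{s}_{can})\,\eta^{pin}-0=\eta^{pin}.
\]
The $g_0$-term vanishes because the Bauer--Furuta invariant of the standard $X'$ in the corresponding spin$^c$ structure is zero. Since $\Omega^{fr}_1\to\Omega^{pin}_1$ is an isomorphism, $FBF_{X,\mathfrak{s}_p}(t_p)\neq 0$.

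With this replacement the rest of your outline, including the non-finite-generation argument, goes through. Take $p$ odd in case (1), so that $E(2n)_p$ is spin, and $p$ even in case (2), using $E(2n)_p$ blown up $k-20n$ times.
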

\begin{proof}
First note that $\Phi$ is well-defined because of Proposition \ref{prop:compact1} and the fact that $BF_{X,\mathfrak{s}} = 0$ for every spin$^c$-structure on $X$. To show that the image is not finitely generated, it suffices to show that there are infinitely many spin$^c$-structures on $X$ with $d(\mathfrak{s}) = -1$ and $FBF_{X , \mathfrak{s}}(t) \neq 0$ for some $t \in T(X)$. In case (1), for each odd integer $m \ge 1$ we will contruct distinct spin$^c$-structures $\mathfrak{s}_m$ on $X$ with $d(\mathfrak{s}_m) = -1$ and isotopy classes $[t_m] \in T(X)$ such that $FBF_{X , \mathfrak{s}_m}(t_m) \neq 0$. In case (2) we will do the same, except that $m \ge 1$ will be an even integer.

For $m,n \ge 1$, let $E(2n)_m$ be the elliptic surface obtained from $E(2n)$ by performing a logarithmic transformation of multiplicity $m$. Then $E(2m)_m$ is spin if and only if $m$ is odd. Set $X_0 = (n-1)(S^2 \times S^2) \# nK3$ in case (1) and $X_0 = (4n-1)\mathbb{CP}^2 \# (k-1)\overline{\mathbb{CP}^2}$ in case (2). Then $X = X_0 \# (S^2 \times S^2)$. Let $L = H^2(X ; \mathbb{Z})$ be the intersection lattice of $X$, $L_0 = H^2(X ; \mathbb{Z})$ the intersection lattice of $X_0$ and $H = H^2(S^2 \times S^2 ; \mathbb{Z})$ the intersection lattice of $S^2 \times S^2$ so that $L = L_0 \oplus H$.

Let $r : S^2 \to S^2$ be a reflection and set $\rho' = r \times r : S^2 \times S^2 \to S^2 \times S^2$ so that $sgn_+(\rho') = -1$. Let $\rho$ be obtained from $\rho'$ by performing an isotopy so that $\rho$ fixes a neighbourhood of a point. Let $f_0 : X \to X$ be a diffeomorphism of the form $f_0 = id_{X_0} \# \rho$. 

Consider case (1) first. Let $X_m = E(2n)_m$ where $m \ge 1$ is odd. Then $E(2n)_m$ is spin and $X_m \# (S^2 \times S^2)$ is diffeomorphic to $X$ \cite{gom,man,moi}. By \cite[Theorem 2]{wall}, we may choose the diffeomorphism $\psi_m : X_m \# (S^2 \times S^2)$ so that induced isomorphism $H^2(X_m \# (S^2 \times S^2) ; \mathbb{Z}) \to H^2(X ; \mathbb{Z}) = L_0 \oplus H$ sends $H^2(X_m ; \mathbb{Z})$ to $L_0$ and sends $H^2(S^2 \times S^2 \; \mathbb{Z})$ to $H$. Let $f_m : X \to X$ be given by $f_m = \psi_m \circ (id_{X_m} \# \rho ) \circ \psi_m^{-1}$. Then $f_0$ and $f_m$ induce the same action on $L$ and hence $t_m = f_m \circ f_0$ defines a class $[t_m] \in T(X)$.

Let $\mathfrak{s}_m = (\psi_m^{-1})^*( \mathfrak{s}_{can} \# \mathfrak{s}_0)$, where $\mathfrak{s}_{can}$ is the canonical spin$^c$-structure on $E(2n)_m$ and $\mathfrak{s}_0$ is the unique spin structure on $S^2 \times S^2$ with $c(\mathfrak{s}_0) = 0$. Recall that the canonical class of $E(2n)_m$ is $2nm-m-1 = (2n-1)m - 1$ times a primitive class. Hence the same is true of $c(\mathfrak{s}_m)$ and so the spin$^c$-structures $\{ \mathfrak{s}_m \}$ are all distinct. We have $d(\mathfrak{s}_m) = -1$, so $FBF_{X , \mathfrak{s}_m}(t_m) \in \Omega_1^{fr}$. We show that $FBF_{X , \mathfrak{s}_m}(t_m) \neq 0$. Since $\Omega_1^{fr} \to \Omega_1^{pin}$ is an isomorphism, it suffices to show $FBF^{pin}_{X , \mathfrak{s}_m}(t_m) \neq 0$. We have:
\begin{align*}
FBF^{pin}_{X , \mathfrak{s}_m}(t_m) &= FBF^{pin}_{X,\mathfrak{s}_m}( f_m \circ f_0) \\
&= FBF^{pin}_{X,\mathfrak{s}_m}(f_m) + FBF^{pin}_{X , \mathfrak{s}_m}(f_0) \\
&= SW( E(2n)_m , \mathfrak{s}_{can}) \eta^{pin} \\
&= \eta^{pin}
\end{align*}
where in the second to last line we used the gluing formula. This proves case (1) of the theorem.

The proof in case (2) is very similar. In this case, for each even $m \ge 1$ we take $X_m = E(2n)_m \# (20n-k)\overline{\mathbb{CP}^2}$ and again we choose diffeomorphisms $\psi_m : X_m \# (S^2 \times S^2) \to X_0 \# (S^2 \times S^2)$ for which the induced isomorphism $H^2(X_m \# (S^2 \times S^2) ; \mathbb{Z}) \to H^2(X ; \mathbb{Z}) = L_0 \oplus H$ sends $H^2(X_m ; \mathbb{Z})$ to $L_0$ and sends $H^2(S^2 \times S^2 \; \mathbb{Z})$ to $H$. Let $f_m : X \to X$ again be given by $f_m = \psi_m \circ (id_{X_m} \# \rho ) \circ \psi_m^{-1}$ and set $t_m = f_m \circ f_0$. Let $\mathfrak{s}_m = (\psi_m^{-1})^*( \mathfrak{s}_{can} \# \mathfrak{s}_0)$, where $\mathfrak{s}_{can}$ is the canonical spin$^c$-structure of $E(2n)_m \# (20n-k)\overline{\mathbb{CP}^2}$ regarded as a blowup of $E(2n)_m$. One easily checks that the spin$^c$-structures $\{ \mathfrak{s}_m \}$ are all distinct and the same calculation as above gives $FBF_{X , \mathfrak{s}_m}(t_m) \neq 0$.
\end{proof}

\begin{remark}
In the proof of case (1) of Theorem \ref{thm:exotic1}, the spin$^c$-structures $\{ \mathfrak{s}_m \}$ that we constructed are not only distinct, but they lie in different $Diff(X)$-orbits because $c(\mathfrak{s}_m)$ is $(2n-1)m-1$ times a primitive class. This does not hold in case (2), except when $k = 20n$.
\end{remark}

Let $X$ be a compact, oriented, smooth, simply-connected $4$-manifold. Fix an embedding $D \to X$ of a closed disc in $X$ and let $Diff(X,D)$ denote the subgroup of $Diff(X)$ whose restriction to $D$ is the identity. The relative mapping class group $M_0(X)$ of $X$ is defined to be $M_0(X) = \pi_0(Diff(X,D))$. The natural map $Diff(X , D) \to Diff(X)$ induces a surjection $M_0(X) \to M(X)$. In the case that $X$ is spin $M_0(X)$ is an extension of $M(X)$ by $\mathbb{Z}_2$ and in the case that $X$ is not spin, $M_0(X) \to M(X)$ is an isomorphism \cite{gir}. We define the relative Torelli group $T_0(X)$ to be the pullback of $M_0(X)$ under $T(X) \to M(X)$. Equivalently, $T_0(X)$ is the subgroup of $M_0(X)$ which acts trivially on $H^2(X ; \mathbb{Z})$. Let $W$ be another compact, oriented, smooth, simply-connected $4$-manifold and fix an embedding $D' \to W$ of a closed disc in $W$. Form the connected sum $X \# W$ by removing open balls in $D$ and $D'$ and identifying their boundaries. Then given a diffeomorphism $f \in Diff(X,D)$, the connected sum $f \# id_W$ is well defined. Clearly the isotopy class of $f \# id_W$ depends only on the isotopy class of $f$ in $Diff(X,D)$, so we have a well-defined stablisation map
\[
\# id_{W} : M_0(X) \to M( X \# W),
\]
which restricts to a map
\[
\# id_{W} : T_0(X) \to T(X \# W).
\]

Let $W$ be a compact, oriented, smooth, simply-connected $4$-manifold. For the purpose of the next theorem, it will be conventient to call such a $4$-manifold {\em Bauer--Furuta non-degenerate} if $W$ admits a spin$^c$-structure $\mathfrak{s}_W$ with $d(\mathfrak{s}_W) = 0$ or $1$ and $BF_{W,\mathfrak{s}_W} = \eta$ or $\eta^2$.

\begin{theorem}\label{thm:exotic2}

Let $X$ be one of:
\begin{itemize}
\item[(1)]{$n (S^2 \times S^2) \# n K3$, $n \ge 1$, or}
\item[(2)]{$4 n \mathbb{CP}^2 \# k \overline{\mathbb{CP}^2}$, $n\ge 1$, $k \ge 20n$.}
\end{itemize}

Then there exists a homomorphism
\[
\Phi : T(X) \to (\Omega^{fr}_1)^\infty
\]
whose image is not finitely generated and has the following property. Let $W$ be a compact, oriented, smooth, simply-connected $4$-manifold which is Bauer--Furuta non-degenerate. Then there exists $d \in \{0,1\}$ and a homomorphism
\[
\Psi : T(X \# W ) \to (\Omega^{fr}_{d+2})^\infty
\]
such that the following diagram commutes
\[
\xymatrix{
T_0(X) \ar[rr]^-{ \# id_W } \ar[d] & & T(X \# W) \ar[d]^-{\Psi} \\
T(X) \ar[r]^-{\Phi} & (\Omega^{fr}_1)^\infty \ar[r]^-{\eta^{d+1} } & (\Omega^{fr}_{d+2})^\infty
}
\]

In particular, $X$ admits infinitely many isotopy classes of exotic diffeomorphisms which remain exotic on connected sums with any $4$-manifold $W$ which is Bauer--Furuta non-degenerate.
\end{theorem}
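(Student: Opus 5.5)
We take $\Phi$ to be the homomorphism of Theorem \ref{thm:exotic1}, indexed by the spin$^c$-structures $\mathfrak{s}$ on $X$ with $d(\mathfrak{s}) = -1$. Its image is already known to be infinitely generated, and $\Phi$ is a homomorphism by Proposition \ref{prop:hom}. Let $W$ be Bauer--Furuta non-degenerate. Fix $\mathfrak{s}_W$ with $d = d(\mathfrak{s}_W) \in \{0,1\}$ and $BF_{W,\mathfrak{s}_W} = \eta^{d+1}$; this is the meaning of non-degeneracy, since $\Omega^{fr}_0 \ni \eta$ is impossible, so $d=0$ pairs with $\eta$ and $d=1$ with $\eta^2$.

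We define $\Psi$ on $T(X \# W)$ by
\[
\Psi(g) = \bigoplus_{\mathfrak{s}} FBF_{X \# W , \mathfrak{s} \# \mathfrak{s}_W}(g),
\]
with $\mathfrak{s}$ running over the same index set as $\Phi$. Three points make this well-defined.
\begin{itemize}
\item Elements of $T(X \# W)$ act trivially on $H^2$, so they fix every spin$^c$-structure and have $sgn_+ = 1$.
\item $d(\mathfrak{s} \# \mathfrak{s}_W) = d(\mathfrak{s}) + d(\mathfrak{s}_W) + 1 = d$, so the invariant lies in $\Omega^{fr}_{d+2}$.
\item We need $\eta \cdot BF_{X \# W, \mathfrak{s} \# \mathfrak{s}_W} = 0$. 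Since $X$ has $b_+ > 0$ on each side and $X = X_0 \# (S^2 \times S^2)$, the unparametrised Bauer--Furuta invariant of $X \# W$ vanishes by the connected sum formula: $BF_{X,\mathfrak{s}} = 0$, and the product with $BF_W$ is still zero.
\end{itemize}
By Proposition \ref{prop:compact1} only finitely many components of $\Psi(g)$ are nonzero, so $\Psi$ lands in $(\Omega^{fr}_{d+2})^\infty$ and is a homomorphism by Proposition \ref{prop:hom}.

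For commutativity, take $[f] \in T_0(X)$ represented by $f \in Diff(X,D)$, so that $f$ fixes a neighbourhood of a point. Proposition \ref{prop:gluing2} then gives
\[
FBF_{X \# W , \mathfrak{s} \# \mathfrak{s}_W}(f \# id_W) = FBF_{X , \mathfrak{s}}(f) \cdot BF_{W , \mathfrak{s}_W} = \eta^{d+1} FBF_{X , \mathfrak{s}}(f),
\]
which is exactly the commutativity of the diagram componentwise.

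For the final assertion we must show that infinitely many of the classes $t_m$ survive. The classes $t_m$ built in Theorem \ref{thm:exotic1} satisfy $\Phi(t_m) = \eta$ in the $\mathfrak{s}_m$-component and $\Phi(t_m) = 0$ in the component of $\mathfrak{s}_{m'}$ for generic $m'$. We need lifts of the $t_m$ to $T_0(X)$; these exist because $T_0(X) \to T(X)$ is surjective, and the diagram already incorporates this. It then suffices that $\eta \cdot \eta^{d+1} = \eta^{d+2} \neq 0$ in $\Omega^{fr}_{d+2}$. This holds for $d = 0, 1$, since $\eta^2 \neq 0 \in \pi_2^s$ and $\eta^3 = 12\nu \neq 0 \in \pi^s_3 \cong \mathbb{Z}_{24}$.

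Hence $\Psi(t_m \# id_W)$ has nonzero $\mathfrak{s}_m \# \mathfrak{s}_W$-component. The structures $\mathfrak{s}_m \# \mathfrak{s}_W$ are distinct for distinct $m$, so the images of the $t_m \# id_W$ are infinitely many distinct nonzero elements. In particular they are non-trivial in $T(X \# W)$, and infinitely many are pairwise non-isotopic: one may pass to a subsequence of the $t_m$ with linearly independent $\Phi$-images, and then the $\Psi$-images remain independent componentwise.

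The main obstacle is the well-definedness condition $\eta\cdot BF_{X\#W,\mathfrak{s}\#\mathfrak{s}_W}=0$. I would justify it through the connected sum formula $BF_{X \# W} = BF_X \cdot BF_W$ together with the vanishing of $BF_X$, which holds because $X$ contains an $S^2 \times S^2$ summand, or alternatively by Proposition \ref{prop:gluing1} applied with trivial family.
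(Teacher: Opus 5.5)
Your proposal is correct and follows the paper's proof. You take $\Phi$ from Theorem \ref{thm:exotic1}, define $\Psi=\bigoplus_{\mathfrak{s}} FBF_{X\#W,\mathfrak{s}\#\mathfrak{s}_W}$ over the structures with $d(\mathfrak{s})=-1$, and obtain commutativity componentwise from Proposition \ref{prop:gluing2}; your extra checks (well-definedness, $d(\mathfrak{s}\#\mathfrak{s}_W)=d$, and injectivity of multiplication by $\eta^{d+1}$ via $\eta^2\neq 0$, $\eta^3=12\nu\neq 0$) fill in what the paper leaves implicit.

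The only slip is your side remark invoking Proposition \ref{prop:gluing1} ``with trivial family''. That proposition does not apply here, since it requires $sgn_+(\rho)=-1$. Your main justification, $BF_{X,\mathfrak{s}}=0$ together with the product formula, already suffices.
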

\begin{proof}
The result follows easily from Theorem \ref{thm:exotic1} and the gluing theorem for the families Bauer--Furuta invariant. Let $\Phi : T(X) \to \bigoplus_{\mathfrak{s}} \Omega^{fr}_1$ be as in Theorem \ref{thm:exotic1}. Since $W$ is assumed to be Bauer--Furuta non-degenerate, there is a spin$^c$-structure $\mathfrak{s}_W$ such that $BF_{W , \mathfrak{s}_W} = \eta^{1+d}$, where $d = d(\mathfrak{s}_W) \in \{0,1\}$. Define $\Psi : T(X \# W) \to \bigoplus_{\mathfrak{s}} \Omega^{fr}_{2+d}$ to be $\Psi = \bigoplus_{\mathfrak{s}} FBF_{X \# W , \mathfrak{s} \# \mathfrak{s}_W}$, where the sum is over spin$^c$-structures on $X$ with $d(\mathfrak{s}) = -1$. Proposition \ref{prop:gluing2} implies that $\Psi( f \# id_W)  = \eta^{d+1} \cdot \Phi(f)$.
\end{proof}

\begin{remark}
Let $X'$ be a compact, oriented, smooth, simply-connected $4$-manifold with $b_+(X) = 3 \; ({\rm mod} \; 4)$. Suppose $X'$ has a spin$^c$-structure $\mathfrak{s}'$ such that $d(\mathfrak{s}') = 0$ and $SW(X' , \mathfrak{s}')$ is odd. Then $BF_{X' , \mathfrak{s}'} = \eta$ \cite[Proposition 4.4]{bf} and hence $X'$ is Bauer--Furuta non-degenerate. In fact the condition $d(\mathfrak{s}') = 0$ is superfluous. The conditions $b_+(X') = 3 \; ({\rm mod} \; 4)$ and $SW(X' , \mathfrak{s}') = 1 \; ({\rm mod} \; 2)$ automatically implies that $d(\mathfrak{s}) = 0$ by \cite[Theorem 3.7]{bf}. If $X''$ is another $4$-manifold satisfying these conditions then $X' \# X''$ is also Bauer--Furuta non-degenerate because of the connected sum formula $BF_{X' \# X'' , \mathfrak{s}' \# \mathfrak{s}''} = BF_{X' , \mathfrak{s}' } \wedge BF_{X'' , \mathfrak{s}''}$.
\end{remark}

\begin{theorem}

Let $Z$ be one of:
\begin{itemize}
\item[(1)]{$n (S^2 \times S^2) \# (n+1) K3$, $n \ge 1$, or}
\item[(2)]{$(4 n+3) \mathbb{CP}^2 \# k \overline{\mathbb{CP}^2}$, $n\ge 1$, $k \ge 20n+19$.}
\end{itemize}

Then there exists a surjective homomorphism
\[
\varphi : T(Z) \to \mathbb{Z}_2^\infty.
\]
In particular, the Torelli group $T(Z)$ is not finitely generated.
\end{theorem}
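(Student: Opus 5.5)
The plan is to deduce the statement from Theorem \ref{thm:exotic2}. To do so, I would exhibit $Z$ as a connected sum $X \# W$, where $X$ is one of the manifolds of Theorem \ref{thm:exotic1} and $W = K3$. For $K3$ with its spin structure $\mathfrak{s}_W$ we have $b_+(K3) = 3$, $d(\mathfrak{s}_W) = 0$ and $SW(K3,\mathfrak{s}_W) = 1$. By the Remark following Theorem \ref{thm:exotic2}, this gives $BF_{K3,\mathfrak{s}_W} = \eta$, so $K3$ is Bauer--Furuta non-degenerate with $d = 0$.

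The first step is to identify $Z$ with $X \# K3$.
\begin{itemize}
\item In case (1) this is immediate: $n(S^2 \times S^2) \# (n+1)K3 = X \# K3$ with $X = n(S^2\times S^2) \# nK3$.
\item In case (2), set $X = 4n\mathbb{CP}^2 \# (k-19)\overline{\mathbb{CP}^2}$. The hypothesis $k \ge 20n+19$ gives $k - 19 \ge 20n$, so $X$ falls under case (2) of Theorem \ref{thm:exotic1}. I would then use the dissolving result $K3 \# \mathbb{CP}^2 \cong 4\mathbb{CP}^2 \# 19\overline{\mathbb{CP}^2}$ (a $\mathbb{CP}^2$ summand of $X$ absorbs the $K3$). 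This gives $X \# K3 \cong (4n+3)\mathbb{CP}^2 \# k\overline{\mathbb{CP}^2} = Z$.
\end{itemize}
Since the Torelli group is a diffeomorphism invariant, it suffices to work with $X \# K3$.

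Next I apply Theorem \ref{thm:exotic2} with $W = K3$ and $d = 0$. This gives $\Psi : T(X \# K3) \to (\Omega^{fr}_2)^\infty$ with $\Psi \circ (\# id_{K3}) = \eta \circ \Phi \circ \pi$, where $\pi : T_0(X) \to T(X)$ is the natural map. The map $\pi$ is surjective, because $M_0(X) \to M(X)$ is surjective and $T_0(X)$ is defined as the pullback. Multiplication by $\eta$ is an isomorphism $\Omega^{fr}_1 \cong \mathbb{Z}_2 \to \Omega^{fr}_2 \cong \mathbb{Z}_2$, since $\eta^2 \neq 0$. Hence the image of $\Psi$ contains a copy of the image of $\Phi$, which is not finitely generated by Theorem \ref{thm:exotic1}. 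Concretely, the classes $t_m \# id_{K3}$ (after lifting each $t_m$ to $T_0(X)$) have nonzero components in infinitely many distinct summands, indexed by the spin$^c$-structures $\mathfrak{s}_m \# \mathfrak{s}_W$.

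Finally, $\Psi$ is a homomorphism by Proposition \ref{prop:hom}, and it lands in the direct sum rather than the product by Proposition \ref{prop:compact1}. So $\operatorname{im}(\Psi)$ is a subgroup of a countable direct sum of copies of $\mathbb{Z}_2$, i.e. a countable $\mathbb{F}_2$-vector space, and it has infinite dimension. Hence $\operatorname{im}(\Psi) \cong \mathbb{Z}_2^\infty$, and I take $\varphi$ to be $\Psi$ with its codomain restricted to its image. Non-finite generation of $T(Z)$ follows, since a finitely generated group cannot surject onto $\mathbb{Z}_2^\infty$.

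The only step I expect to need real care is the case (2) identification $X \# K3 \cong Z$. It relies on the known dissolving result for $K3 \# \mathbb{CP}^2$, which I would cite, together with the bookkeeping that $k-19 \ge 20n$.
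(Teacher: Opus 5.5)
Your proposal is correct and follows the paper's proof. The paper writes $Z = X \# K3$ with $X$ as in Theorem \ref{thm:exotic2}, applies that theorem with $W = K3$, and observes that the image of $\Psi : T(Z) \to (\Omega^{fr}_2)^\infty$ is not finitely generated, so $\Psi$ factors through a surjection onto $\mathbb{Z}_2^\infty$. You also supply details the paper leaves implicit: the dissolving identity $K3 \# \mathbb{CP}^2 \cong 4\mathbb{CP}^2 \# 19\overline{\mathbb{CP}^2}$ for case (2), the surjectivity of $T_0(X) \to T(X)$, the fact that multiplication by $\eta$ is an isomorphism $\Omega^{fr}_1 \to \Omega^{fr}_2$, and the countability argument identifying the image with $\mathbb{Z}_2^\infty$.
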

\begin{proof}
Take $Z = X \# W$, where $X$ is as in Theorem \ref{thm:exotic2} and $W = K3$. The image of $\Psi : T(Z) \to (\Omega^{fr}_2)^\infty$ is not finitely generated, so $\Psi$ factors through a surjection $\varphi : T(Z) \to \mathbb{Z}_2^\infty$.
\end{proof}

\section{Non-finitely generated mapping class groups}\label{sec:mcg}

In \cite{bar2}, we proved that for certain simply-connected $4$-manifolds $X$, the mapping class group $M(X)$ is not finitely generated. Namely $X$ was taken to be $E(n) \# (S^2 \times S^2) = 2n \mathbb{CP}^2 \# 10n \overline{\mathbb{CP}}^2$ for any odd $n \ge 3$. The same result was shown by Konno for $E(n) \# (S^2 \times S^2)$ and $n \ge 2$ \cite{kon}. In \cite{bt}, we extended this result to the case $n=1$. In this section, we will prove that $M(X)$ is not finitely generated for a different class of simply-connected $4$-manifolds.

The strategy used in \cite{bar2,bt} was to contruct an infinite collection of homomorphisms $M_+(X) \to \mathbb{Z}$ which were contructed by summing the families Seiberg--Witten invariants over a $Diff(X)$-invariant set spin$^c$-structures. It is not clear to us whether the same strategy can be made to work for the families Bauer--Furuta invariant and so we will use a different approach, although the underlying idea is again to sum over spin$^c$-structures.

Let $X$ be a compact, oriented, simply-connected smooth $4$-manifold. Let $\mathfrak{s}$ be a spin$^c$-structure on $X$ with $d(\mathfrak{s}) = -1$ or $0$, so that $\Omega^{fr}_{d(\mathfrak{s})+2} \cong \mathbb{Z}_2$ is $2$-torsion. Recall from Proposition \ref{prop:diffinv} that the families Bauer--Furuta invariant $FBF_{X , \mathfrak{s}} : T(X) \to \Omega^{fr}_{d(\mathfrak{s})+2}$ is $M(X)$-invariant in the sense that
\[
FBF_{X , h \cdot \mathfrak{s} }( h \circ f \circ h^{-1} ) = FBF_{X , \mathfrak{s}}(f)
\]
for all $f \in T(X)$, $h \in M(X)$ (no factor of $sgn_+(h)$ is needed because $\Omega^{fr}_{d(\mathfrak{s})+2}$ is $2$-torsion). Let $M(X)$ act on $T(X)$ by conjugation, so that the above equality can be written as $FBF_{X , h \cdot \mathfrak{s}}( h \cdot f ) = FBF_{X , \mathfrak{s}}(f)$. Let $m \ge 1$ and let $\Delta_{m,d}$ be the set of isomorphism classes of spin$^c$-structures $\mathfrak{s}$ on $X$ such that $d(\mathfrak{s}) = d$ and $c(\mathfrak{s})$ is $m$ times a primitive class in $H^2(X ; \mathbb{Z})$. Assume $d \in \{-1,0\}$. Then we obtain a $M(X)$-invariant homomorphism $T(X) \to \Omega^{fr}_{d+2}$ by summing $FBF_{X,\mathfrak{s}}$ over all $\mathfrak{s}$ in $\Delta_{m,d}$. Unfortunately the resulting homomorphism is zero. This is because spin$^c$-structures in $\Delta_{m,d}$ come in charge conjugate pairs $\{ \mathfrak{s} , -\mathfrak{s} \}$ and $FBF_{X , \mathfrak{s}} = \pm FBF_{X  , -\mathfrak{s}}$ by the charge conjugation symmetry of the Seiberg--Witten equations. To get around this problem, introduce an equivalence relation $\sim$ on $\Delta_{m,d}$ whose equivalence classes are charge conjugate pairs. The families Bauer--Furuta invariant thought of as the map $FBF_X : \Delta_{m,d} \times T(X) \to \Omega^{fr}_{d+2}$ sending $( \mathfrak{s} , f)$ to $FBF_{X,\mathfrak{s}}(f)$ descends to a map $FBF_X : (\Delta_{m,d}/\! \! \sim ) \times T(X) \to \Omega^{fr}_{d+2}$. We then define $\Psi_{m,d} : T(X) \to \Omega^{fr}_{d+2}$ to be
\[
\Psi_{m,d}(f) = \sum_{ \mathfrak{s} \in \Delta_{m,d}/\! \sim} FBF_{X,\mathfrak{s}}(f).
\]

Clearly $\Psi_{m,d}$ is a group homomorphism and from the diffeomorphism invariance of $FBF$, we have that $\Psi_{m,d}$ is $M(X)$-invariant in the sense that
\begin{equation}\label{equ:invar}
\Psi_{m,d}( h \cdot f) = \Psi_{m,d}(f), \text{ for all } f \in T(X), h \in M(X).
\end{equation}

\begin{lemma}\label{lem:psi}
Let $X$ be one of 
\begin{itemize}
\item[(1)]{$4 n \mathbb{CP}^2 \# 20n \overline{\mathbb{CP}^2}$, $n\ge 1$.}
\item[(2)]{$(4 n-1) \mathbb{CP}^2 \# (20n-1) \overline{\mathbb{CP}^2}$, $n\ge 2$.}
\end{itemize}

In case (1) there exists infinitely many $m$ such that $\Psi_{m,-1} \neq 0$. In case (2) there exist infinitely many $m$ such that $\Psi_{m,0} \neq 0$.

\end{lemma}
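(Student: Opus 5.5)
For each admissible $m$ I will exhibit an explicit $t_m\in T(X)$ with $\Psi_{m,d}(t_m)\neq 0$. The construction follows the proof of Theorem \ref{thm:exotic1}. The only new ingredient is control over every charge-conjugate pair in $\Delta_{m,d}$, not just the single pair that is visibly nonzero.

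\textbf{Case (1).} Write $X = X_0\#(S^2\times S^2)$ with $X_0 = (4n-1)\mathbb{CP}^2\#(20n-1)\overline{\mathbb{CP}^2}$.

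1. Take $m\ge 2$ even and set $X_m = E(2n)_m$. Then $X_m$ is non-spin and homeomorphic to $X_0$, so $X_m\#(S^2\times S^2)\cong X$. Choose $\psi_m$ splitting the lattice as in Theorem \ref{thm:exotic1}.

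2. Set $f_m=\psi_m(id\#\rho)\psi_m^{-1}$, $f_0 = id_{X_0}\#\rho$ and $t_m=f_m\circ f_0\in T(X)$.

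3. The canonical class of $E(2n)_m$ is $((2n-1)m-1)$ times a primitive class. Relabel so that $m' = (2n-1)m-1$ is the divisibility and use $\Psi_{m',-1}$; this still gives infinitely many values.

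4. Apply the additivity of Proposition \ref{prop:hom} and the gluing formula Proposition \ref{prop:gluing1pin} to each term of $\Psi_{m',-1}(t_m)$. For $\mathfrak{s}\in\Delta_{m',-1}$ this gives
\[
FBF_{X,\mathfrak{s}}(t_m)=FBF_{X,\mathfrak{s}}(f_m)+FBF_{X,\mathfrak{s}}(f_0).
\]

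5. Make both terms computable. For this I need $\mathfrak{s}$ to restrict to $\mathfrak{s}_0$ on the $S^2\times S^2$ summand, or else I need the families invariant to vanish. Both $f_m$ and $f_0$ act on $H$ by $-1$, so an invariant $\mathfrak{s}$ has $c(\mathfrak{s})|_H\in 2H$ fixed by $-1$, which forces $c(\mathfrak{s})|_H=0$. Hence every invariant $\mathfrak{s}$ splits as $\mathfrak{s}'\#\mathfrak{s}_0$ for both decompositions, and the gluing formula applies. It gives
\[
FBF_{X,\mathfrak{s}}(f_m)=BF_{X_m,\mathfrak{s}'}\quad\text{and}\quad FBF_{X,\mathfrak{s}}(f_0)=BF_{X_0,\mathfrak{s}''}.
\]
The second vanishes because $X_0$ is a connected sum of $\pm\mathbb{CP}^2$'s with $b_+\ge 2$ and so carries a positive scalar curvature metric. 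Non-invariant $\mathfrak{s}$ contribute nothing, since $t_m\in T(X)$ fixes all spin$^c$ structures.

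6. Here $d(\mathfrak{s}')=0$ and $b_+(X_m)=4n-1\equiv 3 \pmod 4$. By \cite[Proposition 4.4]{bf}, the class $BF_{X_m,\mathfrak{s}'}\in\Omega^{fr}_1$ equals $SW(X_m,\mathfrak{s}')\,\eta$ mod $2$.

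7. Therefore
\[
\Psi_{m',-1}(t_m)=\sum_{\pm\mathfrak{s}'}SW(E(2n)_m,\mathfrak{s}')\ \text{mod } 2,
\]
where the sum runs over charge-conjugate pairs of basic classes of $E(2n)_m$ whose divisibility is exactly $m'$. The basic classes of $E(2n)_m$ are $(2n-2+\text{stuff})$ multiples of the fibre class $f/m$, with Seiberg--Witten invariants read off from
\[
(t-t^{-1})^{2n-2}\,\frac{t^m-t^{-m}}{t-t^{-1}}.
\]
The extreme classes $\pm K$ have invariant $\pm1$. Every other basic class is a smaller multiple of the same primitive class, so it has strictly smaller divisibility. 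Hence exactly one pair, $\{\pm K\}$, lies in $\Delta_{m',-1}$, and the sum is $\eta\neq 0$.

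\textbf{Main obstacle.} I also need to rule out basic classes of $X_m\#(S^2\times S^2)$ with the same divisibility $m'$ coming from a lattice element that is not the pushforward of a multiple of $f/m$. I expect this is handled by the splitting in step 5. The step I expect to be hardest is bookkeeping: parity choices so that $m'$ runs over infinitely many values, and checking that $\psi_m$ can be chosen compatibly. In particular, for large $n$ the invariant $SW(E(2n)_m, K-2f/m)$ could also be odd, but it has different divisibility, so it does not interfere.

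\textbf{Case (2).} Use $d=0$ and $X=X'\#(S^2\times S^2)$ with $X'=E(2n)_m\#(\text{blowups})$ and $b_+(X')=4n-2$. Here $d(\mathfrak{s}')=1$, and \cite{bf} gives $BF=\eta^2$ for $b_+\equiv 2\pmod 4$ with odd mod-$2$ invariant. Apply the blowup formula to the canonical structure with divisibility controlled by $m$, handling the single $\overline{\mathbb{CP}^2}$ by taking the class $K+E$; the same counting then applies. Here $n\ge 2$ is needed so that $b_+(X')\ge 2$ and the relevant Seiberg--Witten invariants are defined. This computation is the plan to verify most carefully, since the blowup must preserve the divisibility and keep exactly one contributing pair.
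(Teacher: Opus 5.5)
Your construction ($t_m=f_m\circ f_0$ with $X_m=E(2n)_m$, $m$ even, divisibility $m'=(2n-1)m-1$) and your evaluation of the classes with $c(\mathfrak{s})|_H=0$ via Proposition~\ref{prop:hom}, the gluing formula and \cite[Proposition 4.4]{bf} agree with the paper. (A minor slip: if $t$ corresponds to $f/m$, the first factor of the Seiberg--Witten series should be $(t^m-t^{-m})^{2n-2}$, which is what makes the top exponent $(2n-1)m-1$. Your conclusion about $\pm K$ is unaffected.)

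The gap is the last sentence of step 5. You correctly show that a structure fixed by both $f_0$ and $f_m$ has $c(\mathfrak{s})|_H=0$. You then discard all other structures because ``$t_m\in T(X)$ fixes all spin$^c$ structures''. That is backwards. Precisely because $t_m$ fixes everything, $FBF_{X,\mathfrak{s}}(t_m)$ is defined and enters $\Psi_{m',-1}(t_m)$ for \emph{every} class in $\Delta_{m',-1}/\!\sim$, including those with $c(\mathfrak{s})|_H\neq 0$.

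Such classes do exist. Let $c_0\in L_0$ be the image of the canonical class of $E(2n)_m$ and let $e\in H$ be primitive with $e^2=0$. Then $c_0+2m'e$ is characteristic, has square $0$ (so $d=-1$), and has divisibility $\gcd(m',2m')=m'$.

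For these structures neither $f_0$ nor $f_m$ lies in $M(X,\mathfrak{s})$, so your step~4 splitting is unavailable. Also, $t_m$ is not presented as a connected sum for a single splitting, so no gluing formula applies to it directly. Nothing in your argument controls these terms, so you have not shown $\Psi_{m',-1}(t_m)\neq 0$.

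The missing idea is cancellation in pairs. Write such a structure as $\mathfrak{s}=\mathfrak{a}\#\mathfrak{b}$ with $\mathfrak{a}$ on $X_0$, $\mathfrak{b}$ on $S^2\times S^2$ and $c(\mathfrak{b})\neq 0$, and set $\widetilde{\mathfrak{s}}=\mathfrak{a}\#(-\mathfrak{b})$. Since $L_0$ is odd, $c(\mathfrak{a})\neq 0$. Hence $\{\pm\mathfrak{s}\}$ and $\{\pm\widetilde{\mathfrak{s}}\}$ are distinct elements of $\Delta_{m',-1}/\!\sim$.

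Now $f_0\cdot\widetilde{\mathfrak{s}}=\mathfrak{s}$. Proposition~\ref{prop:diffinv} therefore gives $FBF_{X,\widetilde{\mathfrak{s}}}(t_m)=FBF_{X,\mathfrak{s}}(f_0t_mf_0^{-1})=FBF_{X,\mathfrak{s}}(f_0f_m)$; signs are irrelevant mod $2$. Both $f_mf_0$ and $f_0f_m$ lie in $T(X)$, so the two contributions add up to $FBF_{X,\mathfrak{s}}(f_mf_0^2f_m)$.

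This vanishes once you know that $f_0^2$ and $f_m^2$ are isotopic to the identity. Indeed, $\rho^2$ is a Dehn twist along the neck, which can be undone using a circle action on $S^2\times S^2$. Only with this cancellation does $\Psi_{m',-1}(t_m)=\eta$ follow.

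Your case (2) also fails as written. $E(2n)_m$ and its blowups have $b_+\ge 4n-1$, so they cannot be an $X'$ with $b_+(X')=4n-2$. Moreover, when $b_+$ is even and $b_1=0$ the moduli spaces are odd-dimensional, so no odd Seiberg--Witten invariant can produce $\eta^2$; that factor has to come from a connected sum.

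The paper instead writes $X\cong X_{n-1}\#K3$, where $X_{n-1}$ is the case (1) manifold for $n-1$; this is where $n\ge 2$ enters. It then stabilises the case (1) diffeomorphisms by $id_{K3}$. Proposition~\ref{prop:gluing2} with $BF_{K3}=\eta$ multiplies the case (1) answer by $\eta$, and the same bookkeeping, including the pairing cancellation, is needed again.
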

\begin{proof}
Consider case (1) first. Write $X = X_0 \# (S^2 \times S^2)$, where $X_0 = (4n-1)\mathbb{CP}^2 \# (20n-1)\overline{\mathbb{CP}^2}$. Then $L = H^2(X  ;\mathbb{Z})$ splits as $L = L_0 \oplus H$, where $L_0 = H^2(X_0 ; \mathbb{Z})$ and $H = H^2(S^2 \times S^2 ; \mathbb{Z})$. We will show that $\Psi_{m , -1} \neq 0$ whenever $m = (2n-1)m'-1$ for some even $m'$. In fact, letting $f_{m'} : X \to X$ be the diffeomorphisms constructed in Theorem \ref{thm:exotic1} (in case (2) with $k = 20n$), we will show that $\Psi_{m,-1}(t_{m'}) \neq 0$, where $t_{m'} = f_{m'} \circ f_0$. Let $\Psi^{pin}_{m,-1}$ denote the composition of $\Psi_{m,-1}$ with the map $\Omega^{fr}_1 \to \Omega^{pin}_1$. It suffices to show that $\Psi^{pin}_{m,-1}(t_{m'}) \neq 0$.

Any spin$^c$-structure $\mathfrak{s} = \Delta_{m,-1}$ can be written as $\mathfrak{s} = \mathfrak{s}_0 \# \mathfrak{s}_1$, where $\mathfrak{s}_0$ is a spin$^c$-structure on $X_0$ and $\mathfrak{s}_1$ is a spin$^c$-structure on $S^2 \times S^2$. We will divide the spin$^c$-structures in $\Delta_{m,-1}$ into two cases: (i) $c(\mathfrak{s}_1) = 0$ and (ii) $c(\mathfrak{s}_1) \neq 0$. In case (i), both $f_0$ and $f_m$ preserve $\mathfrak{s}$ and thus
\begin{align*}
FBF^{pin}_{X , \mathfrak{s}}( t_{m'}) &= FBF^{pin}_{X,\mathfrak{s}}(f_{m'} \circ f_0) \\
&= FBF^{pin}_{X,\mathfrak{s}}(f_{m'}) + FBF^{pin}_{X,\mathfrak{s}}(f_0) \\
&= SW( E(2n)_{m'} , \mathfrak{s}_1) \eta^{pin}.
\end{align*}

Now if $c(\mathfrak{s})$ has divisibility $m = (2n-1)m' - 1$, then $c(\mathfrak{s}_1)$ also has divisibility $m$. Then $SW( E(2n)_{m'} , \mathfrak{s}_1) = 1 \; ({\rm mod} \; 2)$ for $\mathfrak{s}_1 = \pm \mathfrak{s}_{can}$ and $SW( E(2n)_{m'} , \mathfrak{s}_1) = 0$ otherwise. Thus, out of all the charge conjugate pairs of spin$^c$-structures in $\Delta_{m,-1}/\! \sim$ of type (i), only $\mathfrak{s}_{can} \# \mathfrak{s}_1$ (where $c(\mathfrak{s}_1) = 0$) contributes to $\Psi^{pin}_{m,-1}(t_{m'})$.

Now consider case (ii), this $\mathfrak{s} = \mathfrak{s}_0 \# \mathfrak{s}_1$, where $c(\mathfrak{s}_1) \neq 0$. Note also that $c(\mathfrak{s}_0) \neq 0$ because the lattice $L_0$ is odd. Such spin$^c$-structures can be partitioned into groups of four: $\{ \mathfrak{s} , -\mathfrak{s} , \mathfrak{s}' , -\mathfrak{s}' \}$, where $\mathfrak{s}' = \mathfrak{s}_0 \# (-\mathfrak{s}_1)$. This group of four is made up of two charge conjugate pairs $\{ \pm \mathfrak{s} \}, \{ \pm \mathfrak{s}' \}$. We will show that $FBF_{X , \mathfrak{s}}(t_{m'}) = FBF_{X , \mathfrak{s}'}(t_{m'})$. Thus the contributions to $\Psi^{pin}_{m,-1}(t_{m'})$ coming from $\mathfrak{s}$ and $\mathfrak{s}'$ cancel out and hence $\Psi^{pin}_{m,-1}(t_{m'}) = \eta^{pin}$.

Observe that $f_0 \cdot \mathfrak{s}' = \mathfrak{s}$ and thus
\[
FBF^{pin}_{X , \mathfrak{s}'}( t_{m'} ) = FBF^{pin}_{X , \mathfrak{s}}( f_0 \circ t_{m'} \circ f_0^{-1} ) = FBF^{pin}_{X , \mathfrak{s}}( f_0 \circ f_{m'}).
\]
Therefore,
\begin{align*}
FBF^{pin}_{X,\mathfrak{s}}(t_{m'}) + FBF^{pin}_{X,\mathfrak{s}'}(t_{m'}) &= FBF^{pin}_{X,\mathfrak{s}}( f_{m'} \circ f_0 ) + FBF^{pin}_{X , \mathfrak{s}}( f_0 \circ f_{m'}) \\
&= FBF^{pin}_{X , \mathfrak{s}}( f_{m'} \circ f_0^2 \circ f_{m'}). 
\end{align*}

We have that $f_0^2$ is isotopic to the identity. This follows because $f_0 = id_{X_0} \# \rho$, where $\rho$ is isotopic to an odd involution on $S^2 \times S^2$. Then $f_0^2$ is isotopic to a Dehn twist on the neck of $X_0 \# (S^2 \times S^2)$. But such a Dehn twist is isotopic to the indentity since we can use a circle action on $S^2 \times S^2$ to undo the twist. Similar reasoning shows that $f_{m'}^2$ is isotopic to the identity. Hence
\[
FBF^{pin}_{X , \mathfrak{s}}( f_{m'} \circ f_0^2 \circ f_{m'}) = FBF^{pin}_{X , \mathfrak{s}}( f_{m'}^2) = 0.
\]

This proves that the contrubtions to $\Psi^{pin}_{m,-1}(t_{m'})$ coming from $\mathfrak{s}$ and $\mathfrak{s}'$ cancel, as claimed.

The proof in case (2) is almost identical except that now $X = X_{n-1} \# K3$, where $X_{n-1} = 4(n-1)\mathbb{CP}^2 \# 20(n-1)\overline{\mathbb{CP}^2}$ is the manifold in case (1), but with $n$ replaced by $n-1$. Now instead of the diffeomorphisms $t_{m'} = f_{m'} \circ f_0$, we replace $t_{m'}$ by $t_{m'} \# id_{K3}$, $f_{m'}$ by $f_{m'} \# id_{K3}$ and $f_0$ by $f_0 \# id_{K3}$.
\end{proof}

\begin{theorem}

Let $X$ be one of 
\begin{itemize}
\item[(1)]{$4 n \mathbb{CP}^2 \# 20n \overline{\mathbb{CP}^2}$, $n\ge 1$.}
\item[(2)]{$(4 n-1) \mathbb{CP}^2 \# (20n-1) \overline{\mathbb{CP}^2}$, $n\ge 2$.}
\end{itemize}

Then $M(X)$ is not finitely generated. In fact, the abelianisation of $M(X)$ is not finitely generated.

\end{theorem}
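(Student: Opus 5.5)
We deduce the theorem from Lemma \ref{lem:psi} by turning the $M(X)$-invariant homomorphisms $\Psi_{m,d}$ on $T(X)$ into homomorphisms defined on all of $M(X)$. The approach follows \cite{bar2,bt}, using the fact that $M(X)/T(X)$ is an arithmetic-type group (it embeds in $Aut(L)$), and hence finitely generated.

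\textbf{Step 1: reduction to $T(X)$.} Suppose the abelianisation $M(X)^{ab}$ were finitely generated. Consider the exact sequence $1 \to T(X) \to M(X) \to G \to 1$, where $G \subseteq Aut(H^2(X;\mathbb{Z}))$ is the image of $M(X)$. Since $G$ is a subgroup of an arithmetic group, it is finitely presented; in fact for these indefinite odd lattices $G$ is all of $Aut(L)$ or has finite index in it, by Wall. Let $T(X)_{M}$ denote the coinvariants $T(X)/\langle f^{-1}(h\cdot f), [T(X),T(X)]\rangle$. The five-term exact sequence in group homology gives
\[
H_2(G;\mathbb{Z}) \to T(X)_M \to M(X)^{ab} \to G^{ab} \to 0 .
\]
Since $G$ is finitely presented, $H_2(G;\mathbb{Z})$ is finitely generated. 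Hence if $M(X)^{ab}$ were finitely generated, then $T(X)_M$ would be finitely generated too. By \eqref{equ:invar}, each $\Psi_{m,d}$ factors through $T(X)_M$.

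\textbf{Step 2: many independent invariant functionals.} Set $d=-1$ in case (1) and $d=0$ in case (2). Consider the map
\[
\Psi = \bigoplus_m \Psi_{m,d} : T(X)_M \to \bigoplus_m \mathbb{Z}_2 .
\]
This lands in the direct sum, because by Proposition \ref{prop:compact1} only finitely many $\mathfrak{s}$ contribute for a given $f$. By Lemma \ref{lem:psi}, infinitely many components are nonzero. We need the image to be infinite. For this we check the elements used in the proof of Lemma \ref{lem:psi}: the class $t_{m'}$ has $\Psi_{m,d}(t_{m'})\neq 0$ only for $m=(2n-1)m'-1$. Indeed, in the case-(i) analysis only $\mathfrak{s}_{can}\#\mathfrak{s}_1$, whose divisibility is $(2n-1)m'-1$, contributes, and the Seiberg--Witten invariants of $E(2n)_{m'}$ are supported on multiples of the fibre class of bounded divisibility. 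I would refine this step to show that only $\pm\mathfrak{s}_{can}$ contributes at all; alternatively, a triangularity argument (finitely many contributing $m$ for each $t_{m'}$, with distinct maximal ones) suffices. Either way the image of $\Psi$ is an infinite elementary abelian $2$-group, hence not finitely generated. This contradicts Step 1, so $M(X)^{ab}$, and therefore $M(X)$, is not finitely generated.

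\textbf{Main obstacle.} The hard part is the finite generation input in Step 1, namely that $G$, and therefore $H_2(G)$, is finitely generated. I would cite Wall's theorem identifying the image of $M(X)$ with $Aut(L)$ for these indefinite odd lattices, which are of the form $a\langle1\rangle\oplus b\langle-1\rangle$ with $a\ge 2$, together with Borel--Harish-Chandra finite presentability of arithmetic groups.
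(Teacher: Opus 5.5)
Your proof is essentially the paper's. The invariance (\ref{equ:invar}) pushes each $\Psi_{m,d}$ down to $(T(X)_{ab})_{M(X)}$. Wall's theorem identifies $M(X)/T(X)$ with $Aut(L)$, which is arithmetic and hence finitely presented, so its $H_2$ is finitely generated. The five-term sequence then transfers non-finite generation to $M(X)_{ab}$. You are right that finite presentability, not mere finite generation, is what is needed to get $H_2$ finitely generated.

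Two of your side claims are false, although neither is load-bearing.

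First, a subgroup of an arithmetic group need not be finitely presented, or even finitely generated; for example $[F_2,F_2]\subset SL_2(\mathbb{Z})$. So the appeal to Wall's $G=Aut(L)$ is genuinely needed, not an optional strengthening.

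Second, Step 2 does more than necessary. A finitely generated subgroup of $\bigoplus_m \mathbb{Z}_2$ has finite total support. So Lemma \ref{lem:psi}, together with the fact that $\Psi$ lands in the direct sum, already shows the image is not finitely generated; this is exactly how the paper concludes. Your proposed refinement, that only $\pm\mathfrak{s}_{can}$ contributes to $\Psi(t_{m'})$, is also wrong. It misreads the case (i) analysis in Lemma \ref{lem:psi}, which is carried out only for the fixed divisibility $m=(2n-1)m'-1$. Lower basic classes of $E(2n)_{m'}$ can have odd Seiberg--Witten invariant. For $n=1$, $SW_{E(2)_{m'}}=(t^{m'}-t^{-m'})/(t-t^{-1})$ has all coefficients equal to $1$, so for instance $t_4$ is detected by both $m=3$ and $m=1$. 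Of the two options you offer, only the triangularity fallback would survive, and it is not needed.
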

\begin{proof}
For a group $G$, let $G_{ab}$ denote its abelianisation. The $M(X)$-action on $T(X)$ by conjugation descends to an action on $T(X)_{ab}$. Let $(T(X)_{ab})_{M(X)}$ denote the coinvariants of this action.

Since $\Omega^{fr}_{d+2}$ is an abelian group, the homomorphism $\Psi_{m,d} : T(X) \to \Omega^{fr}_{d+2}$ descends to $\Psi_{m,d} : T(X)_{ab} \to \Omega^{fr}_{d+2}$. Furthermore (\ref{equ:invar}) says that $\Psi_{m,d}$ is $M(X)$-invariant, so $\Psi_{m,d}$ descends to a homomorphism $\Psi_{m,d} : (T(X)_{ab})_{M(X)} \to \Omega^{fr}_{d+2} \cong \mathbb{Z}_2$. By Proposition \ref{prop:compact1}, for fixed $f \in T(X)$, $\Psi_{m,d}(f) = 0$ for all but finitely many $m$. So for fixed $d \in \{-1,0\}$, we obtain a homomorphism
\[
\Psi : (T(X)_{ab})_{M(X)} \to \bigoplus_{m \ge 1} \mathbb{Z}_2, \quad \Psi = \bigoplus_m \Psi_{m,d}.
\]
By Lemma \ref{lem:psi}, the image of $\Psi$ is not finitely generated and hence $(T(X)_{ab})_{M(X)}$ is not finitely generated.

Recall the short exact sequence $1 \to T(X) \to M(X) \to \Gamma \to 1$. From the Lyndon--Hochschild--Serre spectral sequence in homology, we get the exact sequence
\[
H_2( \Gamma ; \mathbb{Z}) \buildrel \partial \over \longrightarrow (T(X)_{ab})_{M(X)} \to M(X)_{ab} \to \Gamma_{ab} \to 0.
\]

Now since $X$ is of the form $X = X_0 \# (S^2 \times S^2)$, where $X_0$ is simply-connected and $H^2(X_0 ; \mathbb{Z})$ is indefinite, it follows from \cite[Theorem 2]{wall} that $\Gamma$ is the group of automorphisms of the lattice $L = H^2(X ; \mathbb{Z})$. Then $\Gamma$ is an arithmetic group and is finitely generated \cite[Theorem 6]{sou}. It follows that $H_2(\Gamma ; \mathbb{Z})$ is finitely-generated \cite[Exercise 5(a), Page 46]{bro}. If $(T(X)_{ab})_{M(X)}/\partial( H_2(\Gamma ; \mathbb{Z}))$ was finitely generated, then $(T(X)_{ab})_{M(X)}$ would also be finitely generated, which we have shown above is not the case. Therefore $(T(X)_{ab})_{M(X)}/\partial( H_2(\Gamma ; \mathbb{Z}))$ is a non-finitely generated subgroup of the abelian group $M(X)_{ab}$. Hence $M(X)_{ab}$ is not finitely generated and so neither is $M(X)$.
\end{proof}


\bibliographystyle{amsplain}

\end{document}